\documentclass[multiarticle,nochapter,download]{msjmemoirs}
\usepackage{amsmath,amsthm,amssymb}

\usepackage[all]{xy}
\usepackage{bbm}

\usepackage{makeidx}
\makeindex

\usepackage[nottoc]{tocbibind}

\usepackage{etoolbox}
\patchcmd{\theindex}{\thispagestyle{plain}}{}{}{}

\allowdisplaybreaks


\newtheorem{thm}{Theorem}[section]
\newtheorem{prop}[thm]{Proposition}
\newtheorem{conj}[thm]{Conjecture}
\newtheorem{cor}[thm]{Corollary}
\newtheorem{lem}[thm]{Lemma}
\newtheorem{claim}[thm]{Claim}
\newtheorem{condition}[thm]{Condition}

\theoremstyle{definition}

\numberwithin{equation}{section}
\newtheorem{rem}[thm]{Remark}
\newtheorem{ex}[thm]{Example}

\newtheorem{defn}[thm]{Definition}

\newtheorem{assum}[thm]{Assumption}

\def\bbk{\mathbbm{k}}

\def\bbP{\mathbb{P}}
\def\bbQ{\mathbb{Q}}
\def\bbR{\mathbb{R}}
\def\bbT{\mathbb{T}}

\def\bbZ{\mathbb{Z}}

\def\bfB{\mathbf{B}}
\def\bfC{\mathbf{C}}
\def\bfF{\mathbf{F}}
\def\bfG{\mathbf{G}}

\def\bfSigma{\mathbf{\Sigma}}
\def\bfa{\mathbf{a}}
\def\bfb{\mathbf{b}}
\def\bfc{\mathbf{c}}

\def\bfe{\mathbf{e}}

\def\bfg{\mathbf{g}}
\def\bfm{\mathbf{m}}
\def\bfn{\mathbf{n}}

\def\bfv{\mathbf{v}}
\def\bfu{\mathbf{u}}
\def\bfw{\mathbf{w}}
\def\bfx{\mathbf{x}}
\def\bfy{\mathbf{y}}

\def\bfzero{\mathbf{0}}
\def\calA{\mathcal{A}}

\def\calF{\mathcal{F}}
\def\calH{\mathcal{H}}

\def\frakd{\mathfrak{d}}
\def\frakD{\mathfrak{D}}

\def\frakp{\mathfrak{p}}
\def\frakq{\mathfrak{q}}

\def\frakS{\mathfrak{S}}
\def\fraks{\mathfrak{s}}
\def\haty{\hat{y}}

\def\d{\delta}

\def\ss{\scriptstyle}


\usepackage{etoolbox}
\makeatletter
\pretocmd{\@startsection}{\gdef\thesectiontype{#1}}{}{}
\pretocmd{\@sect}{\@namedef{the\thesectiontype title}{#8}}{}{}
\pretocmd{\@ssect}{\@namedef{the\thesectiontype title}{#5}}{}{}
\makeatother

\setlength{\headheight}{12.1pt} 
\usepackage{fancyhdr}
\fancyhf{}

\fancyhead[LE,RO]{\thepage}
\fancyhead[CE]{ II.\thesection.\ \thesectiontitle}
\fancyhead[CO]{\thesubsection.\ \thesubsectiontitle}


\begin{document}
\bibliographystyle{amsalpha}
\fontsize{11pt}{12.5pt}\selectfont

\mainmatter[page=1]
\title[\ ]{Cluster Algebras and Scattering Diagrams\\
\ \\
Part II\\
Cluster Patterns and Scattering Diagrams\thanks{
This is the final manuscript of
 Part II of the monograph ``Cluster Algebras and Scattering Diagrams'',
 MSJ Mem. 41 (2023) by the author. 
}}
\author[\ ]{Tomoki Nakanishi\\
\small Graduate School of Mathematics, Nagoya University}
\maketitle

\begin{abstract}
We review some important results by Gross, Hacking, Keel, and Kontsevich  on
cluster algebra theory, namely, the \emph{column sign-coherence of $C$-matrices\/}
and the \emph{Laurent positivity}, both of which were conjectured by Fomin and Zelevinsky.
We digest and reconstruct 
the proofs of these conjectures by Gross et al.
still based on their scattering diagram method,
however, without relying on toric/birational geometry.
 At the same time, we also give a detailed account of the correspondence
 between the notions of cluster patterns
and scattering diagrams.
 Most of the results  in this part are  found in or
translated from the known results
in the literature.
However, the approach, the construction of logic and proofs, and the
overall presentation
 are
new.
Also, as an application,
we show that there is a one-to-one correspondence
 between $g$-vectors and cluster variables
in cluster patterns with arbitrary coefficients.
\end{abstract}

\newpage
\tableofcontents[depth=2]

\newpage


\setcounter{section}{-1}
\pagestyle{fancy}
\fancyhead[CO]{\thesection.\ \thesectiontitle}

\section{Introduction to Part II}
\label{2sec:intro1}
In this part  we review some important results by Gross, Hacking, Keel, and Kontsevich \cite{Gross14} on
cluster algebra theory, namely, the \emph{column sign-coherence of $C$-matrices\/}
and the \emph{Laurent positivity}, both of which were conjectured by Fomin and Zelevinsky \cite{Fomin07, Fomin02}.

Let us start by recalling the background.
In the paper \cite{Fomin07} Fomin and Zelevinsky  developed a
new method to study 
\emph{cluster patterns}, which are the underlying algebraic and combinatorial
structure to define cluster algebras.
The basic ingredients of the method are \emph{$C$-matrices, $G$-matrices, and 
$F$-polynomials\/} associated with a given cluster pattern.
Through the \emph{separation formulas}, the cluster pattern
is recovered from them.
Moreover,
one can regard  $C$- and $G$-matrices as the \emph{tropical part\/} of
the cluster pattern, while $F$-polynomials as the \emph{nontropical part},
so that
the separation formulas naturally separate these two parts.
In many applications of cluster algebras or cluster patterns, this separation 
plays a central role.

In \cite{Fomin07} several conjectures on the properties of
 $C$- and $G$-matrices and $F$-polynomials were also proposed.
Among them, there are two prominent conjectures, namely,
\begin{itemize}
\item
\emph{The sign-coherence conjecture\/}:
 Every $C$-matrix is column sign-coherent.
\item
\emph{The Laurent positivity conjecture\/}: 
Every $F$-polynomial has no negative coefficients.
(It is equivalent to the earlier conjecture in \cite{Fomin02}
on the positivity of the coefficients in the Laurent polynomial expressions for cluster variables.)
\end{itemize}
Due to their importance,
these conjectures, together with other conjectures in \cite{Fomin07}, have been intensively
studied, and proved in some special cases by various methods.
Most notably, in  the skew-symmetric case,
the sign-coherence conjecture was proved 
by \cite{Derksen10, Plamondon10b, Nagao10},
and the Laurent positivity conjecture  was proved by \cite{Lee15, Davison16}.
However, the extending these proofs to the most general case, i.e., the skew-symmetrizable case, seemed difficult and demanding some new ideas.

In \cite{Gross14} Gross, Hacking, Keel, and Kontsevich finally proved 
both conjectures together in full generality by the \emph{scattering diagram method}.
The scattering diagrams  were originally introduced in \cite{Kontsevich06}
to study the toric degenerations of Calabi-Yau manifolds in homological mirror symmetry
in two dimensions,
 and they were generalized to higher dimensions by
\cite{Gross07}.

\begin{table}
\begin{center}
\begin{tabular}{ccc}
 cluster patterns & &scattering diagrams\\
\hline
 $G$-matrices & &chambers\\
 $C$-matrices & &normal vectors of walls\\
$F$-polynomials  && wall-crossing automorphisms\\
 cluster variables &&theta functions \\
 \hline
\end{tabular}
\end{center}
\caption{Correspondence between
the notions of
cluster patterns and scattering diagrams}
\label{2table:corres1}
\end{table}

Roughly speaking, we have the  correspondence between
the notions in
cluster patterns and scattering diagrams
as in Table \ref{2table:corres1}.
Let us admit this correspondence.
Then,
by the definition/construction of scattering diagrams,
any normal vector of a wall is either positive or negative.
This proves the sign-coherence conjecture.
On the other hand, any theta function has a
manifestly positive combinatorial description
by \emph{broken lines\/}
as shown in \cite{Gross14}.
This proves the Laurent positivity conjecture.
\emph{Therefore,  establishing the above correspondence
is the crucial step to prove the conjectures.}

In this part we digest and reconstruct the results 
and the proofs of the above  theorems
in \cite{Gross14} 
still based on their scattering diagram method,
however,
\emph{without relying on
toric/birational geometry therein}.
To be more specific,
we set the following guidelines while preparing the manuscript:
 \begin{itemize}
 \item We assume that readers are familiar with basic concepts in
 cluster algebras to some extent, for example, seeds and mutations, cluster variables,
 coefficients, the Laurent phenomenon, the finite type classification, etc.

 \item The goal of the part is to give reasonably self-contained  proofs of the sign-coherence
 and the Laurent positivity theorems
based on the formulation of cluster patterns by \cite{Fomin07}
with the help of some key properties of scattering diagrams from \cite{Gross14}.
 \item
 At the same time, we also give a detailed account  of the correspondence
 between the notions of cluster patterns
and scattering diagrams in  Table \ref{2table:corres1}.

 \item 
We employ the Fock-Goncharov decomposition
 of mutations \emph{with tropical sign\/}
 to work on $F$-polynomials.
This substitutes and avoids the toric/birational geometrical setting in \cite{Gross14},
 including cluster varieties,
 which plays a key role at several points in their original proofs of the  conjectures.

   \item We rely crucially on some basic properties of scattering diagrams in \cite{Gross14}
  \emph{that are independent of the  geometrical setting therein}.
  The details and the proofs of these results will be given in Part III
  of this monograph.

\item Our approach is entirely based on the formulation of cluster patterns
in \cite{Fomin07}.
However, after the paper \cite{Fomin07},
considerable progress has been made in the subject.
So,  we also take it into account and reorganize and/or streamline some part of the formulation.
Most notably, we  do not rely on the notion of  principal coefficients explicitly \emph{except
for\/}  the polynomial property of $F$-polynomials.
  \end{itemize}

Most of the results  in this part are  found in or
translated from the known results
 in \cite{Gross14},
and/or
in  other related literatures,
especially,  a series of papers by Reading
\cite{Reading11,
Reading12,
Reading17,
Reading18}.
However, the approach, the construction of logic and proofs, and the overall presentation  are
new.
For example, we prove the sign-coherence conjecture
inductively along the $n$-regular tree graph
\emph{together with\/}  establishing
the correspondence in Table \ref{2table:corres1}
for $C$- and $G$-matrices.
This approach is completely different from the toric geometrical argument in \cite{Gross14}.
On the other hand, it is close to the approach in  \cite{Reading12, Reading17}
using  \emph{mutation fans\/} therein.
In fact, many of our results can be also obtained from
the results on the mutations fans therein.
A subtle but important difference is that we work on $C$- and $G$-matrices
without referring to mutation fans at all.
Therefore,  our approach is more direct.
To summarize, our presentation
is complementary to the above references,
 and we believe it serves as an alternative guide for
the scattering diagram formalism for cluster algebras.

Finally let us mention that there are already several excellent applications
of the scattering diagram method to cluster algebra theory (e.g., \cite{Muller15,Cheung15, Bridgeland16, Cao17, Cao18, Zhou19, Qin19, Mou19, Davison19}).

\newpage
\fancyhead[CO]{\thesubsection.\ \thesubsectiontitle}

\section{Cluster patterns and separation formulas}

In this section we recall several basic notions and facts on  \emph{cluster patterns\/}
by following the formulation of Fomin and Zelevinsky especially in \cite{Fomin07}.
Most of the materials are taken or adapted from  \cite{Fomin07}, and 
we assume that the readers are familiar with them to some extent.
Thus, we skip the proofs of the results unless we think necessary.

\subsection{Seeds and mutations}

\pagestyle{fancy}

Below we fix a positive integer $n$,
which is called the \emph{rank\/} of the forthcoming seeds, cluster patterns, \emph{etc}.

\begin{defn}[Skew-symmetrizable matrix]
An $n\times n$  integer  matrix $B=(b_{ij})_{i,j=1}^n$
is said to be \emph{skew-symmetrizable\/} \index{matrix!skew-symmetrizable}
if there is a positive rational diagonal matrix $D=(d_i\delta_{ij})_{i,j=1}^n$
such that $DB$ is skew-symmetric,
i.e,
\begin{align}
\label{2eq:ss1}
d_i b_{ij}= - d_jb_{ji}
\end{align}
holds. The matrix $D$ is called a \emph{(left) skew-symmetrizer\/} of $B$.
\index{skew-symmetrizer}
\end{defn}

\begin{rem}
Often a skew-symmetrizer $D$ is assumed to be an \emph{integer\/} matrix.
However,  it is natural to use a rational one
in order to match with the convention in \cite{Gross14}.
 See \eqref{2eq:Dmat1}. So, we employ the condition here.
\end{rem}

For a skew-symmetrizable matrix $B$,
its diagonal entries $b_{ii}$ vanish
due to \eqref{2eq:ss1}.
Also,  the condition \eqref{2eq:ss1} is rephrased as
\begin{align}
\label{2eq:dbd1}
DBD^{-1}=-B^T,
\end{align}
where for any matrix $M$, $M^T$ stands for the transpose of $M$.

For our purpose,
we especially employ the following (nonstandard) definition of seeds.

\begin{defn}[Seed]
\label{2defn:seed1}
Let
$\calF_X$ and $\calF_Y$  be a pair of fields both of which are isomorphic to
the rational function field of $n$-variables
with  coefficients in $\mathbb{Q}$.
A \emph{(labeled) seed in $(\calF_X, \calF_Y)$} \index{seed} \index{seed!labeled}
is a triplet
$\Sigma=(\bfx,\bfy,B)$,
where 
$\bfx=(x_1,\dots,x_n)$
and $\bfy=(y_1,\dots,y_n)$
are  $n$-tuples of free  generating elements
in $\calF_X$ and $\calF_Y$, respectively,
and $B=(b_{ij})_{i,j=1}^n$
is an $n\times n$
skew-symmetrizable (integer) matrix.
We call $\bfx$, $\bfy$, and $B$,
respectively, 
the \emph{$x$-variables}, \index{$x$-variable (cluster variable)}
the \emph{$y$-variables},  \index{$y$-variable (coefficient)}
and the \emph{exchange matrix\/} of $\Sigma$. \index{matrix!exchange}
We also call the pair $(\calF_X, \calF_Y)$ the \emph{ambient fields\/}  \index{ambient field}
of a seed $\Sigma$
and the forthcoming cluster patterns, \emph{etc}.
\end{defn}

\begin{rem}
In the standard terminology of \cite{Fomin07},
the $x$-variables above are \emph{cluster variables without coefficients},
while the $y$-variables above are \emph{coefficients in the
universal semifield\/} $\mathbb{Q}_{\mathrm{sf}}(\mathbf{y})\subset \calF_Y$.
Here, $x$- and $y$-variables are on an equal footing and independent of each other.
In particular,
$y$-variables do \emph{not\/}  serve as coefficients for $x$-variables. 
\end{rem}

For any seed $\Sigma=(\bfx,\bfy,B)$,
we attach an $n$-tuple 
$\hat\bfy=(\hat{y}_1,\dots,\hat{y}_n)$
of elements in the ambient field $\calF_X$ defined by
\begin{align}
\label{2eq:yhat1}
\hat{y}_i
= \prod_{j=1}^n x_j^{b_{ji}}.
\end{align}
They play an important role in cluster algebra theory.
We call them  \emph{$\hat{y}$-variables}. \index{$\hat{y}$-variable}
They are algebraically independent if and only if $B$ is nonsingular.

For any integer $a$,
we define 
\begin{align}
[a]_+:=\max(a,0).
\end{align}
We have a useful identity:
\begin{align}
\label{2eq:pos1}
a=[a]_+ - [-a]_+.
\end{align}

\begin{defn}[Seed mutation]
\index{mutation!seed}
\label{2defn:mut1}
For any seed $\Sigma=(\bfx,\bfy,B)$
and $k\in \{1,\dots,n\}$,
we define a new seed 
$\Sigma'=(\bfx',\bfy',B')$
by the following rule:
\begin{align}
\label{2eq:xmut1}
x'_i
&=
\begin{cases}
\displaystyle
x_k^{-1}\Biggl(\, \prod_{j=1}^n x_j^{[-b_{jk}]_+}
\Biggr)
( 1+\hat{y}_k)& i=k,
\\
x_i
&i\neq k,
\end{cases}
\\
\label{2eq:ymut1}
y'_i
&=
\begin{cases}
\displaystyle
y_k^{-1}
& i=k,
\\
y_i y_k^{[b_{ki}]_+} (1+ y_k)^{-b_{ki}}
&i\neq k,
\end{cases}
\\
\label{2eq:bmut1}
b'_{ij}&=
\begin{cases}
-b_{ij}
&
\text{$i=k$ or $j=k$,}
\\
b_{ij}+
b_{ik} [b_{kj}]_+
+
[-b_{ik}]_+b_{kj}
&
i,j\neq k,
\end{cases}
\end{align}
where $\hat{y}_k$ in \eqref{2eq:xmut1} is the $\hat{y}$-variable in  \eqref{2eq:yhat1}.
The seed $\Sigma'$ is called the \emph{mutation of $\Sigma$ in direction $k$},
and denoted by $\mu_k(\Sigma)=\mu_k(\bfx,\bfy,B)$.
\end{defn}

The following facts
ensure that $(\bfx',\bfy',B')$ is indeed a seed.
\begin{itemize}
\item[(a).] If $D$ is a skew-symmetrizer of $B$,
then it is also a skew-symmetrizer of $B'$.
In particular,
the matrix $B'$ is skew-symmetrizable.
\item[(b).]
Regard the mutation $\mu_k$ as a map from the 
set of seeds to itself. Then, $\mu_k$ is an involution.
In particular, $\bfx'$ and $\bfy'$ are free
generating elements
in $\calF_X$ and $\calF_Y$, respectively.
\end{itemize}

Also, one can easily verify that $|B'|=|B|$,
so that $B'$ is nonsingular if and only if $B$ is nonsingular.

The following fact can be easily verified.
\begin{prop}[{\cite[Prop.~3.9]{Fomin07}}]
\label{2prop:yhat1}
The $\hat{y}$-variables mutate as
\begin{align}
\hat{y}'_i
=
\begin{cases}
\displaystyle
\hat{y}_k^{-1}
& i=k,
\\
\hat{y}_i \hat{y}_k^{[b_{ki}]_+} (1+ \hat{y}_k)^{-b_{ki}}
&i\neq k
\end{cases}
\end{align}
in the ambient field $\calF_X$
by the same rule as \eqref{2eq:ymut1}.
\end{prop}

There is some flexibility to write the mutation formulas
 \eqref{2eq:xmut1}--\eqref{2eq:bmut1} as follows.
(The choice $\varepsilon=1$ reduces to the one
in \eqref{2eq:xmut1}--\eqref{2eq:bmut1}.)

\begin{prop}[{\cite[Eqs.~(2.9), (2.10)]{Nakanishi11c},
\cite[Eq.~(3.1)]{Berenstein05}}]
\label{2prop:eps1}
Let $\varepsilon \in \{1,-1\}$.
Then, the right hand sides of the
following equalities do not depend on the
choice of $\varepsilon$:
\begin{align}
\label{2eq:xmut2}
x'_i
&=
\begin{cases}
\displaystyle
x_k^{-1}\Biggl(\, \prod_{j=1}^n x_j^{[-\varepsilon b_{jk}]_+}
\Biggr)
( 1+\hat{y}_k^{\varepsilon})
& i=k,
\\
x_i
&i\neq k,
\end{cases}
\\
\label{2eq:ymut2}
y'_i
&=
\begin{cases}
\displaystyle
y_k^{-1}
& i=k,
\\
y_i y_k^{[\varepsilon b_{ki}]_+} (1+ y_k^{\varepsilon})^{-b_{ki}}
&i\neq k,
\end{cases}
\\
\label{2eq:bmut2}
b'_{ij}&=
\begin{cases}
-b_{ij}
&
\text{$i=k$ or $j=k$,}
\\
b_{ij}
+
b_{ik} [\varepsilon b_{kj}]_+
+
[-\varepsilon b_{ik}]_+b_{kj}
&
i,j\neq k.
\end{cases}
\end{align}
\end{prop}
The claim can be easily checked with the identity \eqref{2eq:pos1}.
We call the above formulas  the \emph{$\varepsilon$-expressions\/} of mutations.
\index{$\varepsilon$-expression}

\subsection{Cluster patterns}

Let $\bbT_n$ be the \emph{$n$-regular tree} graph,
that is, a graph without cycles such that exactly $n$-edges are attached
to each vertex.
\index{regular tree}
We assume that the edges are labeled by $1$,\dots, $n$
such that the edges attached to each vertex 
have different labels.
By abusing the notation,
the set of vertices of $\bbT_n$ is also denoted by 
 $\bbT_n$.
 We say that a pair of vertices in $t$ and $t'$ in $\bbT_n$
 are ($k$-)\emph{adjacent}, or $t'$ is ($k$-)\emph{adjacent to $t$}, \index{$k$-adjacent}
 if they are connected with an edge labeled by $k$.

\begin{defn}[Cluster pattern/$B$-pattern]
\index{pattern!cluster}
\index{pattern!$B$-} \index{$B$-pattern}
A collection of seeds $\mathbf{\Sigma}=\{ \Sigma_t
=(\bfx_t,\bfy_t,B_t)
\}_{ t\in \bbT_n}
$ in $(\calF_X, \calF_Y)$
 indexed by $\bbT_n$
is called a \emph{cluster pattern\/} if,
for any vertices $t,t'\in \bbT_n$ that are $k$-adjacent,
the equality $\Sigma_{t'}=\mu_k( \Sigma_t)$ holds.
The collection of exchange matrices
 $\mathbf{B}=\{ B_t\}
_{t\in \bbT_n}
$ extracted from a cluster pattern $\bfSigma$ is called the \emph{$B$-pattern\/}
of $\bfSigma$.
\end{defn}
We arbitrary choose a distinguished
 vertex (the \emph{initial vertex\/}) $t_0$ in $\bbT_n$. \index{initial vertex}
Any cluster pattern
 $\mathbf{\Sigma}=\{ \Sigma_t
=(\bfx_t,\bfy_t,B_t)\}_{t\in \bbT_n }$ is uniquely determined from
the \emph{initial seed\/} $\Sigma_{t_0}$ at  the initial vertex $t_0$ \index{seed!initial}
by repeating mutations.

Note that one can take a common skew-symmetrizer $D$ for all $B_t$ in $\mathbf{B}$.
By \eqref{2eq:dbd1}, we have
\begin{align}
\label{2eq:DBD1}
DB_tD^{-1}=-B_t^T.
\end{align}
In particular, if $B_t$ is skew-symmetric for some $t$, it is  skew-symmetric for any $t$.
Such a cluster pattern or a $B$-pattern is said to be \emph{skew-symmetric}.
 \index{$B$-pattern!skew-symmetric}
 Similarly, if $B_t$ is nonsingular for some $t$, it is  nonsingular for any $t$.
Such a cluster pattern or a $B$-pattern is said to be \emph{nonsingular}. \index{$B$-pattern!nonsingular}

For  a seed $\Sigma_t=(\bfx_t,\bfy_t,B_t)$ ($t\in \bbT_n$)
in a cluster pattern $\mathbf{\Sigma}$,
we use the notation
\begin{align}
\label{2eq:xyt1}
\bfx_t=(x_{1;t},\dots,x_{n;t}),
\quad
\bfy_t=(y_{1;t},\dots,y_{n;t}),
\quad
B_t=(b_{ij;t})_{i,j=1}^n.
\end{align}
For the initial seed $\Sigma_{t_0}=(\bfx_{t_0},\bfy_{t_0},B_{t_0})$, we often drop the indices $t_0$ as
\begin{align}
\label{2eq:xyinit1}
\bfx_{t_0}=\bfx=(x_{1},\dots,x_{n}),
\quad
\bfy_{t_0}=\bfy=(y_{1},\dots,y_{n}),
\quad
B_{t_0}=B=(b_{ij})_{i,j=1}^n,
\end{align}
if there is no confusion.
We also use similar notations for $\hat{y}$-variables $\hat{\bfy}_t
=(\hat{y}_{1;t}, \dots, \hat{y}_{n;t})$
and the initial
$\hat{y}$-variables $\hat{\bfy}_{t_0}=\hat{\bfy}
=(\hat{y}_1,\dots,\hat{y}_n)$.

We recall the definition of  cluster algebras,
though
the  focus of this part is
 to study the properties of \emph{cluster patterns\/}
  rather than cluster algebras.
  
\begin{defn}[Cluster algebra]
\index{cluster algebra}
For a cluster pattern $\mathbf{\Sigma}$,
the associated \emph{cluster algebra\/} $\calA(\bfSigma)$
is the $\bbZ$-subalgebra of the ambient field $\calF_X$
generated by all $x$-variables (cluster variables) $x_{i,t}$ ($i=1,\dots,n; t\in \bbT_n$).
\end{defn}

\subsection{$C$-matrices, $G$-matrices, and $F$-polynomials}

Following \cite{Fomin07},
we introduce
$C$-matrices, $G$-matrices, and $F$-polynomials.
They play a crucial role  in the study  of
cluster patterns.

\subsubsection{Matrix notations}

We start by  introducing the following matrix notations (see, e.g., \cite{Nakanishi11a}).
For an $n\times n$ matrix $A$, let $[A]_+$ denote the matrix obtained from $A$ by replacing the entry $a_{ij}$
with $[a_{ij}]_+$.
Let $A^{\bullet k}$
denote the matrix obtained from $A$ by replacing the entry $a_{ij}$ ($j\neq k$)
with $0$.
Similarly, let $A^{ k\bullet}$
denote the matrix obtained from $A$ by replacing the entry $a_{ij}$ ($i\neq k$)
with 0.

For any $n\times n$ matrices,
the following facts are easily checked,
where for the last two equalities we use \eqref{2eq:pos1}.
\begin{gather}
\label{2eq:AB1}
AB^{\bullet k}=(AB)^{\bullet k},
\quad
A^{k \bullet} B=(AB)^{k \bullet },
\\
\label{2eq:AB2}
A^{\bullet k}B=A^{\bullet k}B^{k \bullet} =AB^{k \bullet},
\\
\label{2eq:AB3}
A[B]_+ + [-A]_+B=A[-B]_+ + [A]_+B,
\\
\label{2eq:AB4}
\text{If $AB=CD$, then $A[B]_+ - C[D]_+=A[-B]_+ - C[-D]_+$.}
\end{gather}

Let $J_k$ be the $n\times n$ diagonal matrix obtained from the identity matrix by replacing
the $k$th diagonal entry with $-1$.
The following properties will be repeatedly used.

\begin{lem}
\label{2lem:square1}
Let $A$ be any $n\times n$ matrix whose diagonal entries are 0.
Let $P=J_k + A^{k\bullet}$
or
$J_k + A^{\bullet k}$.
Then, we have
\begin{gather}
\label{2eq:AJ1}
A^{\bullet k}J_k=-A^{\bullet k},
\quad
J_k A^{\bullet k}=A^{\bullet k},
\quad
A^{k \bullet }J_k=A^{k \bullet },
\quad
J_k A^{k \bullet }=-A^{k \bullet },
\\
\label{2eq:Nsq1}
P^2=I,
\\
\label{2eq:Nsq2}
|P|=-1.
\end{gather}
\end{lem}

\begin{proof}
Let us prove \eqref{2eq:Nsq1} for $P=J_k + A^{k\bullet}$. By $a_{kk}=0$,
we have
\begin{align}
\begin{split}
P^2&=J_k^2 + J_k A^{k\bullet} + A^{k\bullet} J_k +A^{k\bullet}A^{k\bullet}
\\
&= I -A^{k\bullet} + A^{k\bullet} =I.
\end{split}
\end{align}
Also, we have $|P|=|J_k|=-1$.
\end{proof}

\begin{ex}
The first case of the matrix mutation \eqref{2eq:bmut1}
is written as
\begin{align}
\label{2eq:bmut3}
(B')^{k\bullet}=-B^{k\bullet},
\quad
(B')^{\bullet k}=-B^{ \bullet k}.
\end{align}
Also, the entire mutation  \eqref{2eq:bmut1} is written as
\cite[Eq.~(3.2)]{Berenstein05}
\begin{align}
\label{2eq:bmut4}
B'=(J_k + [-B]_+^{\bullet k})
B(J_k + [B]_+^{k \bullet }).
\end{align}
By \eqref{2eq:Nsq1}, we have the relation
\begin{align}
\label{2eq:bmut5}
(J_k + [-B]_+^{\bullet k})B'=
B(J_k + [B]_+^{k \bullet }).
\end{align}
\end{ex}

\subsubsection{$C$-matrices}

Let $\mathbf{\Sigma}$ be any cluster pattern,
and let $\mathbf{B}$ be the $B$-pattern of $\mathbf{\Sigma}$.
Let $t_0\in \mathbb{T}_n$ be a given initial vertex.

We  introduce a collection of 
matrices $\bfC^{t_0}=\bfC=\{ C_t\}_{ t\in \bbT_n}$
called a \emph{$C$-pattern}, \index{pattern!$C$-}\index{$C$-pattern}
which is uniquely determined  from  $\mathbf{B}$ and $t_0$, therefore, eventually only from $t_0$ and $B_{t_0}$.

\begin{defn}[$C$-matrices]
\index{matrix!$C$-}\index{$C$-matrix}
\label{2defn:Cmat1}
The \emph{$C$-matrices $C_t=(c_{ij;t})_{i,j=1}^n$ ($t\in \bbT_n$)
of a cluster pattern $\mathbf{\Sigma}$ (or a $B$-pattern $\bfB$) with an initial vertex $t_0$}
are $n\times n$
integer matrices 
that are uniquely determined 
by the following initial condition
and the mutation rule:
\begin{align}
\label{2eq:Cmat1}
C_{t_0}&=I,
\\
 \label{2eq:cmut2}
 C_{t'}&=
 C_tJ_k + C_t[B_t]_+^{k\bullet}
 + [-C_t]_+^{\bullet k}B_t,
 \end{align}
 or more explicitly,
  \begin{align}
 \label{2eq:cmutmat2}
 c_{ij;t'}&=
 \begin{cases}
 -c_{ik;t}
 &
 j= k,
 \\
 c_{ij;t} + c_{ik;t} [b_{kj;t}]_+
 + [-c_{ik;t}]_+ b_{kj;t}
 &
 j \neq k,
 \end{cases}
 \end{align}
where $t$ and $t'$ are $k$-adjacent.
Each column vector $\bfc_{i;t}=(c_{ji;t})_{j=1}^n$ of a matrix $C_t$ is called
a \emph{$c$-vector}. \index{vector!$c$-}\index{$c$-vector}
\end{defn}

\begin{rem}
The $C$-matrices appeared implicitly as the lower halves of the extended
 exchange matrices for the principal coefficients in \cite[Def.~3.1]{Fomin07}.
\end{rem}

The mutation \eqref{2eq:cmut2}, in particular, implies
\begin{align}
\label{2eq:ck1}
\bfc_{k;t'}=-\bfc_{k;t},
\end{align}
or equivalently,
\begin{align}
\label{2eq:ck2}
C_{t'}^{\bullet k}=-C_{t}^{\bullet k}.
\end{align}

One can easily check that the mutation \eqref{2eq:cmutmat2} is involutive
using \eqref{2eq:pos1}, \eqref{2eq:bmut1}, and \eqref{2eq:ck1}.
Alternatively, in the matrix form,
by multiplying $J_k$ from the right to \eqref{2eq:cmut2}, we have
\begin{align}
 C_{t'} J_k&=
C_t
+
 C_t ^{\bullet k} [B_t]_+^{k\bullet}
 + [-C_t]_+^{\bullet k}B_t^{k\bullet}.
 \end{align}
 By \eqref{2eq:bmut3} and \eqref{2eq:ck2}, 
 this can be written as
\begin{align}
C_t&=
 C_{t'} J_k +  C_{t'}[-B_{t'}]_+^{k\bullet}
 + [C_{t'}]_+^{\bullet k}B_{t'}.
 \end{align}
Then, 
by \eqref{2eq:AB3},
we have
\begin{align}
C_t&=
 C_{t'} J_k +C_{t'}  [B_{t'}]_+^{k\bullet}
 + [-C_{t'}]_+^{\bullet k}B_{t'},
 \end{align}
 which is the desired result.
Moreover, by \eqref{2eq:AB3} again,
we have 
 the following $\varepsilon$-expression of \eqref{2eq:cmut2}:
\begin{prop}[{\cite[Eq.~(2.4)]{Nakanishi11a}}]
\index{$\varepsilon$-expression!for $C$-matrices}
Let $\varepsilon \in \{1,-1\}$.
Then, the right hand side of the
following equality does not depend on the
choice of $\varepsilon$:
\begin{align}
 \label{2eq:cmut3}
 C_{t'}&=
 C_tJ_k +  C_t[\varepsilon B_t]_+^{k\bullet}
 + [-\varepsilon C_t]_+^{\bullet k} B_t.
 \end{align}
\end{prop}

\subsubsection{$G$-matrices}

Under the same assumption of $C$-matrices,
we  introduce another collection of 
matrices $\bfG^{t_0}=\bfG=\{ G_t\}_{ t\in \bbT_n}$
called a \emph{$G$-pattern}, \index{pattern!$G$-}\index{$G$-pattern}
which is  uniquely determined  from $\mathbf{B}$, $t_0$,
and $\bfC^{t_0}$ defined above.

\begin{defn}[$G$-matrices]
The \emph{$G$-matrices $G_t=(g_{ij;t})_{i,j=1}^n$ ($t\in \bbT_n$) \index{matrix!$G$-}\index{$G$-matrix}
of a cluster pattern $\mathbf{\Sigma}$  (or a $B$-pattern $\bfB$)  with an initial vertex $t_0$}
are $n\times n$
integer matrices 
that are uniquely determined 
by the following initial condition
and the mutation rule:
\begin{align}
\label{2eq:Gmat1}
G_{t_0}&=I,
\\
 \label{2eq:gmut1}
 G_{t'}&=
 G_tJ_k +G_t [- B_t]_+^{\bullet k}
 - B_{t_0}[- C_t]_+^{\bullet k},
 \end{align}
 or more explicitly,
 \begin{align}
 g_{ij;t'}&=
 \begin{cases}
 \displaystyle
 -g_{ik;t}
 + \sum_{\ell=1}^n g_{i\ell;t} [-b_{\ell k;t}]_+
 -  \sum_{\ell=1}^nb_{i\ell;t_0}  [-c_{\ell k;t}]_+ 
 &
 j= k,
 \\
 g_{ij;t}  &
 j \neq k,
 \end{cases}
 \end{align}
where $t$ and $t'$ are $k$-adjacent.
Each column vector $\bfg_{i;t}=(g_{ji;t})_{j=1}^n$ of a matrix $G_t$ is called
a \emph{$g$-vector}. \index{vector!$g$-}\index{$g$-vector}
\end{defn}

\begin{rem}
The $g$-vectors were originally introduced as
the degree vectors of cluster variables with principal coefficients in \cite{Fomin07}.
\end{rem}

The first nontrivial property so far  is the following relation
between $C$- and $G$-matrices.

\begin{prop}[{\cite[Eq.~(6.14)]{Fomin07}}]
 The following equality holds for any $t\in \bbT_n$.
 \par
 (First duality) \index{duality!first}
\begin{align}
\label{2eq:dual0}
G_t B_t = B_{t_0} C_t.
\end{align}
\end{prop}
The proof in \cite[Eq.~(6.14)]{Fomin07}
relied on the fact that $g$-vectors are the degree vectors  of cluster variables
 with principal coefficients.
We give an alternative proof based on the  definitions
of $C$- and $G$-matrices here.
Let $d(t,t')$ denote the distance in $\bbT_n$ between two vertices $t,t'
\in \bbT_n$.

\begin{proof}
We prove it  by 
 the induction on $t$ along $\bbT_n$ from $t_0$.
 For $t=t_0$, the equality \eqref{2eq:dual0} holds 
 because $C_{t_0}=G_{t_0}=I$.
 Suppose that the equality \eqref{2eq:dual0} holds for  $t\in \bbT_n$ such that
 $d(t_0,t)=d$. Let $t'$ be the vertex that is $k$-adjacent to $t$ such that $d(t_0,t')=d+1$.
 Then, we have
 \begin{align}
 \begin{split}
 G_{t'} B_{t'}
 &= 
  G_t(J_k + [- B_t]_+^{\bullet k})B_{t'}
 - B_{t_0}[- C_t]^{\bullet k}B_{t'}^{k \bullet }
\\
&
=
  G_tB_{t}(J_k + [B_t]_+^{k \bullet })
 + B_{t_0}[- C_t]^{\bullet k}B_{t}^{k \bullet }
 \\
 &
=
 B_{t_0} C_t(J_k + [B_t]_+^{k \bullet })
 + B_{t_0}[- C_t]^{\bullet k}B_{t}
 \\
 &=
  B_{t_0}C_{t'},
\end{split}
 \end{align}
 where in the second equality we used
 \eqref{2eq:bmut3} and \eqref{2eq:bmut5}.
\end{proof}

\begin{rem}
There has been no name on this equality so far in the literature; however, due to
its importance, we name it as  above,
where the duality means a relation between $C$- and $G$-matrices.
(Later we will have the second and the third dualities,
which are more nontrivial.)
\end{rem}

Thanks to
 \eqref{2eq:AB4} and  \eqref{2eq:dual0},
one can  easily check that the mutation \eqref{2eq:gmut1} is involutive
as before.
Moreover, 
by   \eqref{2eq:AB4} and \eqref{2eq:dual0} again,
we have
 the following $\varepsilon$-expression of \eqref{2eq:gmut1}.
 
\begin{prop}[{\cite[Eqs.~(6.12), (6.13)]{Fomin07}}]
\index{$\varepsilon$-expression!for $G$-matrices}
Let $\varepsilon \in \{1,-1\}$.
Then, the right hand side of the
following equality does not depend on the
choice of $\varepsilon$:
\begin{align}
 \label{2eq:gmut2}
  G_{t'}&=
 G_tJ_k +G_t [- \varepsilon B_t]_+^{\bullet k}
 - B_{t_0}[- \varepsilon C_t]^{\bullet k}. \end{align}
\end{prop}

\subsubsection{$F$-polynomials}

Let 
$\mathbf{y}=(y_1,\dots,y_n)$  be an $n$-tuple of formal variables.
Although it is not necessary at this moment,
it is natural to identify them with the initial $y$-variables $\mathbf{y}_{t_0}$
of $\mathbf{\Sigma}$
as we will see soon.
Under the same assumption of $C$-matrices,
we  introduce a collection of 
rational functions
 $\bfF^{t_0}=\bfF=\{ F_{i;t}(\bfy) \}_{ i=1,\dots,n;\, t\in \bbT_n}$
 in $\bfy$
 called a \emph{$F$-pattern}, \index{pattern!$F$-}\index{$F$-pattern}
 which is  uniquely determined  from $\mathbf{B}$, $t_0$,
and $\bfC^{t_0}$  as follows.

\begin{defn}[$F$-polynomials]
\index{$F$-polynomial}
\label{2defn:Fpoly1}
The \emph{$F$-polynomials $F_{i;t}(\bfy)$ ($i=1,\dots,n$; $t\in \bbT_n$)
of a cluster pattern $\mathbf{\Sigma}$  (or a $B$-pattern $\bfB$) with an initial vertex $t_0$}
are rational functions in
$\bfy$
with coefficients in $\bbQ$
that are uniquely determined 
by the following initial condition
and the mutation rule:
\begin{align}
\label{2eq:Finit1}
F_{i;t_0}(\bfy)&=1,
\\
 \label{2eq:Fmut1}
 F_{i;t'}(\bfy)&=
 \begin{cases}
\frac
 {
  \displaystyle
 M_{k;t}(\bfy)
 }
{ \displaystyle
 F_{k;t}(\bfy)
 }
   &
 i= k,
 \\
 F_{i;t}(\bfy)  &
 i \neq k,
 \end{cases}
 \end{align}
where $t$ and $t'$ are $k$-adjacent,
and $M_{k,t}(\bfy)$ is a rational function in $\bfy$
defined as follows:
\begin{align}
\label{2eq:M1}
 M_{k;t}(\bfy)
 &=
    \prod_{j=1}^{n}
  y_j^{[c_{jk;t}]_+}
    \prod_{j=1}^{n}
  F_{j;t}(\bfy)^{[b_{jk;t}]_+}
+
    \prod_{j=1}^{n}
  y_j^{[-c_{jk;t}]_+}
    \prod_{j=1}^{n}
  F_{j;t}(\bfy)^{[-b_{jk;t}]_+}.
\end{align}
\end{defn}

In the situation in \eqref{2eq:Fmut1},
we have
\begin{align}
 M_{k;t}(\bfy)
 =
  M_{k;t'}(\bfy).
\end{align}
Therefore, 
 the mutation 
\eqref{2eq:Fmut1} is involutive.
We do not discuss 
the $\varepsilon$-expression for \eqref{2eq:Fmut1},
because we do not use it here.

The $F$-polynomials were originally introduced as the specialized $x$-varia\-bles
with principal coefficients under the specialization $x_1=\dots=x_n=1$
 in \cite{Fomin07}.
The Laurent phenomenon applied to them yields the following important fact,
which justifies the name \emph{$F$-polynomials}.

\begin{thm}[{\cite[Prop.~3.6]{Fomin07}}]
\label{2thm:Fpoly1}
For any $i=1,\dots,n$ and $t\in \bbT_n$,
the rational function $F_{i;t}(\bfy)$ is a polynomial  in $\bfy$
with coefficients in $\bbZ$.
\end{thm}

\begin{rem}
\label{2rem:notation1}
In case we need to clarify the choice of the initial vertex $t_0$,
we write  $C$-matrices and others as, $C^{t_0}_t = (c^{t_0}_{ij;t})_{i,j=1}^n$,
$\bfc_{i;t}^{t_0}$,
$G^{t_0}_t = (g^{t_0}_{ij;t})_{i,j=1}^n$,
$\bfg_{i;t}^{t_0}$,
$F_{i;t}^{t_0}(\bfy)$.
Conversely,
if there is no confusion, we suppress the superscript $t_0$ as above for simplicity.
\end{rem}

\subsection{Separation formulas and tropicalization}
\label{2subsec:separation1}

Let us present   one of the most fundamental 
results on cluster patterns in \cite{Fomin07}.

\begin{thm}[{Separation Formulas \cite[Prop.~3.13, Cor.~6.3]{Fomin07}}]
\index{separation formula}
\label{2thm:sep1}
Let $\mathbf{\Sigma}$ be any cluster pattern,
and let $t_0\in \mathbb{T}_n$ be a given initial vertex.
Let $\bfC$, $\bfG$, and $\bfF$ be
the $C$-, $G$-, and $F$-patterns
of $\mathbf{\Sigma}$ with the initial vertex $t_0$.
Let $\bfx$, $\bfy$,  and  $\hat{\bfy}$
 be the initial $x$-, $y$-variables,
 and $\hat{y}$-variables
for the initial seed $\Sigma_{t_0}$.
Then,
the following formulas hold.
\begin{align}
\label{2eq:sep1}
x_{i;t}&=
\Biggl(\,
\prod_{j=1}^n
x_j^{g_{ji;t}}
\Biggr)
F_{i;t}(\hat{\bfy}),\\
\label{2eq:sep2}
y_{i;t}&=
\Biggl(\,
\prod_{j=1}^n
y_j^{c_{ji;t}}
\Biggr)
\prod_{j=1}^n
F_{j;t}(\bfy)^{b_{ji;t}}.
\end{align}
\end{thm}
\begin{proof}
Here is an alternative proof to the one in \cite{Fomin07}.
Using the mutations of $C$- and $G$-matrices, and $F$-polynomials,
one can easily verify
\eqref{2eq:sep1} and \eqref{2eq:sep2} 
by the induction on $t$ along $\bbT_n$ from $t_0$
with the help of \eqref{2eq:dual0}.
We leave the details as an exercise to the readers.
\end{proof}

Since  $\hat{y}$-variables mutate exactly in the same manner as $y$-variables,
 a parallel formula  holds for them:
\begin{align}
\label{2eq:sep3}
\hat{y}_{i;t}&=
\Biggl(\,
\prod_{j=1}^n
\hat{y}_j^{c_{ji;t}}
\Biggr)
\prod_{j=1}^n
F_{j;t}(\hat{\bfy})^{b_{ji;t}}.
\end{align}
Alternatively, one can  derive it from \eqref{2eq:sep1} with the help of
 the duality
\eqref{2eq:dual0}.

Thank to the separation formulas, the study of cluster patterns reduces to
the study of $C$- and $G$-matrices, and $F$-polynomials.
Moreover,
the formulas play a central role
in the interplay between the cluster algebraic structure
and various subjects.

The original meaning of the separation formula in \cite[Cor.~6.3]{Fomin07} is that
a general version of the formula \eqref{2eq:sep1} for \emph{$x$-variables
with coefficients\/} naturally ``separates'' the additions of $x$-variables and
 coefficients.
Unfortunately, we cannot observe it in \eqref{2eq:sep1}, because our $x$-variables
are \emph{without coefficients}.
Below we attach an additional meaning of ``separation'' to the formulas in view of the
 \emph{tropicalization}.

First, we briefly recall the notion of tropicalization in the context of semifields \cite{Fomin07}.
\begin{defn}
\index{semifield}
A \emph{semifield\/} $\mathbb{P}$ is
a multiplicative abelian group 
equipped with 
 a binary operation
$\oplus$ on $\mathbb{P}$
that is commutative, associative, and distributive,
i.e., $(a\oplus b) c = ac\oplus bc$.
The operation $\oplus$ is called the \emph{addition\/} in $\mathbb{P}$.
\end{defn}

Here we exclusively use the following two semifields.
\begin{ex}
\label{2ex:sf1}
(a). \emph{Universal semifield\/} $\mathbb{Q}_{\mathrm{sf}}(\bfu)$. \index{semifield!universal}
Let $\bfu=(u_1,\dots, u_n)$ be an $n$-tuple of 
commutative variables.
We say that a 
rational function $f(\bfu)\in \bbQ(\bfu)$ in $u$
has a \emph{subtraction-free expression\/} \index{subtraction-free expression}
if it is expressed as
$f(\bfu)=p(\bfu)/q(\bfu)$, where $p(\bfu)$ and $q(\bfu)$
are nonzero polynomials in $\bfu$ whose coefficients
are nonnegative.
Let   $\mathbb{Q}_{\mathrm{sf}}(\bfu)$
be the set of all rational functions in $\bfu$
having subtraction-free expressions.
Then, $\mathbb{Q}_{\mathrm{sf}}(\bfu)$
is a semifield by the usual 
multiplication and addition in $\bbQ(\bfu)$.
\par
(b). \emph{Tropical semifield\/} $\mathrm{Trop}(\bfu)$. \index{semifield!tropical}
Let $\bfu=(u_1,\dots, u_n)$ be an $n$-tuple of 
commutative variables.
Let $\mathrm{Trop}(\bfu)$ be the set of all
Laurent monomials of $\bfu$ with coefficient 1,
which is a multiplicative abelian group by the usual
multiplication.
We define the addition $\oplus$ by
\begin{align}
\label{2eq:ts1}
\prod_{i=1}^n u_i^{a_i} \oplus
\prod_{i=1}^n u_i^{b_i}
:=
\prod_{i=1}^n u_i^{\min(a_i,b_i)} .
\end{align}
Then,  $\mathrm{Trop}(\bfu)$ becomes a semifield.
The addition $\oplus$ is called the \emph{tropical sum}. \index{tropical sum}
\end{ex}

Consider $\mathbb{Q}_{\mathrm{sf}}(\bfu)$ and
$\mathrm{Trop}(\bfu)$ with common generating variables
$\bfu=(u_1,\dots, u_n)$.
Then, 
we have a unique semifield homomorphism
\begin{align}
\pi_{\mathrm{trop}}: \mathbb{Q}_{\mathrm{sf}}(\bfu)
\rightarrow \mathrm{Trop}(\bfu)
\end{align}
such that $\pi_{\mathrm{trop}}(u_i)=u_i$ for any $i=1,\dots,n$.
We call it the \emph{tropicalization homomorphism}.
\index{tropicalization!homomorphism}
\index{semifield!tropicalization homomorphism}
Roughly speaking, it extracts the ``leading monomial'' of 
a rational function $f(\bfu)$ in $ \mathbb{Q}_{\mathrm{sf}}(\bfu)$.

\begin{ex}
For $\bfu=(u_1,u_2,u_3)$,
\begin{align}
\pi_{\mathrm{trop}}
\left(
\frac{3u_1u_2^2u_3^2 + 2u_1^2u_2u_3}{3u_2^2 + u_1^2 u_2^2+
u_1u_2^3u_3}
\right)
=
\frac{u_1u_2u_3}{u_2^2}=u_1u_2^{-1}u_3.
\end{align}
\end{ex}

Now let $\bfSigma$ be any cluster pattern, and let $t_0\in \bbT_n$
be a given initial vertex.
Let $\bfy_{t_0}=\bfy$ be the initial $y$-variables in the ambient field $\mathcal{F}_Y$.
Then, the semifield generated by $\bfy$ in $\mathcal{F}_Y$ (by the usual multiplication and addition)
is identified with
the universal semifield 
$\mathbb{Q}_{\mathrm{sf}}(\bfy)$.
Moreover, any $y$-variable $y_{i;t}$ belongs to $\mathbb{Q}_{\mathrm{sf}}(\bfy)$
because the mutation \eqref{2eq:ymut1} is a subtraction-free operation.
Let us apply the tropicalization homomorphism $\pi_{\mathrm{trop}}^{t_0}:
\mathbb{Q}_{\mathrm{sf}}(\bfy)\rightarrow \mathrm{Trop}(\bfy)$
to $y$-variables. 
\begin{defn}[Tropical $y$-variable]
\index{tropical!$y$-variable}
\index{$y$-variable (coefficient)!tropical}
We call the image $\pi_{\mathrm{trop}}^{t_0}(y_{i;t})$ a
 \emph{tropical $y$-variable\/} with respect to the initial vertex $t_0$.
\end{defn}

The following fact 
 can be easily deduced from \eqref{2eq:Fmut1}
 by the induction on $t$ along $\bbT_n$ from $t_0$.

\begin{prop}[{\cite[Eq.~(5.5)]{Fomin07}}]
\begin{align}
\label{2eq:Ftrop1}
\pi_{\mathrm{trop}}^{t_0}(F_{i;t}(\bfy))=1.
\end{align}
\end{prop}
 
 \begin{rem}
The equality \eqref{2eq:Ftrop1} does not mean the polynomial
$F_{i;t}(\bfy)$ has a nonzero constant term.
For example, $\pi_{\mathrm{trop}}(y_1+y_2)=1$.
\end{rem}

Tropical $y$-variables are naturally identified with $c$-vectors as follows:
\begin{prop}[{\cite[Eq.~(3.14)]{Fomin07}}]
\label{2prop:ytrop1}
 We have
\begin{align}
\label{2eq:ytrop1}
\pi_{\mathrm{trop}}^{t_0}(y_{i;t})=y^{\bfc_{i;t}}:=\prod_{j=1}^n y_j^{c_{ji;t}}
.
\end{align}
\end{prop}
\begin{proof}
This follows from the separation formula \eqref{2eq:sep2}
and \eqref{2eq:Ftrop1}.
\end{proof}

\begin{rem}
In fact, \eqref{2eq:ytrop1} is the definition of $c$-vectors 
 in \cite[Eq. (5.8)]{Fomin07},
 where the tropical $y$-variables are identified with
\emph{principal coefficients}.
\end{rem}

Now looking back the separation formula \eqref{2eq:sep2} for $y$-variables,
we see that it naturally ``separates'' the \emph{tropical\/} and the \emph{nontropical\/} parts of
$y$-variables.

On the other hand, the tropicalization of $x$-variables 
with respect to the initial $x$-variables $\bfx$ does not yield  $g$-vectors;
rather, it yields the so-called \emph{$d$-vectors\/} (\emph{denominator vectors\/} in \cite{Fomin07}),
\index{vector!denominator ($d$-)}\index{$d$-vector}
which are the leading exponents of  the Laurent polynomial expression of $x$-variables.
However, if we regard the operation of setting all $F$-polynomials $F_{i;t}(\bfy)$ to 1
also as ``a kind of tropicalization'' for $x$-variables,
then 
 the separation formula \eqref{2eq:sep1} for $x$-variables
 can be viewed again as separating the ``tropical'' and the ``nontropical'' parts of
$x$-variables,
where 
a $g$-vector $\bfg_{i;t}=(g_{ji;t})_{j=1}^n$
is identified with  a \emph{tropical $x$-variable\/} defined by
\index{tropical!$x$-variable}\index{$x$-variable (cluster variable)!tropical}
\begin{align}
\label{2eq:xtrop1}
x^{\bfg_{i;t}}:=\prod_{j=1}^n x_j^{g_{ji;t}}.
 \end{align}

In the same token, we call 
\begin{align}
\label{2eq:yhattrop1}
\hat{y}^{\bfc_{i;t}}:=
\prod_{j=1}^n \hat{y}_j^{c_{ji;t}}
\end{align}
a  \emph{tropical $\hat{y}$-variable\/} with respect to the initial vertex $t_0$.
\index{tropical!$\hat{y}$-variable}
\index{$\hat{y}$-variable!tropical}
Thanks to the duality \eqref{2eq:dual0}, we see that the notion is compatible with a tropical
$x$-variable as
\begin{align}
\label{2eq:yhattrop2}
\hat{y}^{\bfc_{i;t}}=
\prod_{j=1}^n (x^{\bfg_{j;t}})^{b_{ji;t}}.
\end{align}
Alternatively, based on the duality \eqref{2eq:dual0},
we introduce a \emph{$\hat{c}$-vector\/} $\hat{\bfc}_{i;t}$ \index{vector!$\hat{c}$-}\index{$\hat{c}$-vector}
by
\begin{align}
\label{2eq:chat1}
\hat{\bfc}_{i;t}
&=B_{t_0} \bfc_{i;t}
=G_{t} \bfb_{i;t},
\end{align}
where $\bfb_{i;t}$ is the $i$th column vector of $B_t$.
In other words, it is the $i$th column vector of a \emph{$\hat{C}$-matrix\/} \index{matrix!$\hat{C}$-}\index{$\hat{C}$-matrix}
defined by
\begin{align}
\label{2eq:gbc2}
\hat{C}_{t}:=B_{t_0}C_{t}=G_tB_t.
\end{align}
Then,
the tropical $\hat{y}$-variable  in \eqref{2eq:yhattrop2}
is  expressed also as
\begin{align}
\label{2eq:yhattrop3}
\hat{y}^{\bfc_{i;t}}=
 {x}^{\hat{\bfc}_{i;t}}.
\end{align}
These expressions and notions play a major role in the scattering diagram method later. 

\subsection{Sign-coherence of $C$-matrices and Laurent positivity}

In the paper \cite{Fomin07} several conjectures on
the properties of cluster patterns were proposed.
Here we recall especially two (three, precisely speaking) conjectures among them,
which are the theme of this part.

Let $\bfC$ and $\bfF$ be
the $C$-  and $F$-patterns
of
any cluster pattern $\bfSigma$ with a given initial point  $t_0\in \bbT_n$.

The first conjecture is the following:
\begin{conj}[{Unit constant property \cite[Conj.~5.4]{Fomin07}}]
\label{2conj:Fconst1}
Every $F$-polynomial $F_{i;t}(\bfy)$ has constant term 1.
\end{conj}

This seemingly innocent conjecture turned out to be difficult to prove
 simply by the induction on $t$ along $\bbT_n$.
However, it was noticed by \cite{Fomin07} that Conjecture \ref{2conj:Fconst1}  is equivalent to another remarkable conjecture on $C$-matrices and $c$-vectors.

\begin{defn}[Row/Column sign-coherence]
We say that a  vector $\bfv \in \bbZ^n$ is \emph{positive\/}
(resp.~\emph{negative\/})
\index{vector!positive/negative}
if
it is nonzero vector and
 all  nonzero components  are positive (resp.~negative).
We say that a  matrix $M$ is \emph{column sign-coherent\/}
 (resp.~\emph{row sign-coherent\/}) if  each column vector (resp.~row vector) of $M$
is  either positive or negative.
\index{matrix!column/row sign-coherent}
\end{defn}

\begin{conj}[{Column sign-coherence of $C$-matrices \cite[Prop.~5.6]{Fomin07}}]
\index{sign-coherence!of $C$-matrix}
\index{sign-coherence!conjecture}
\label{2conj:Csign1}
Every $C$-matrix $C_t$ is column sign-coherent.
Equivalently,  every $c$-vector $\bfc_{i;t}$
 is either positive or negative.
\end{conj}

The equivalence between Conjectures \ref{2conj:Fconst1} and \ref{2conj:Csign1}
  easily follows from the mutation of $F$-polynomials  \eqref{2eq:Fmut1}
 \cite[Prop.~5.6]{Fomin07}.

The sign-coherence conjecture was proved in the skew-symmetric case
 by
\cite{Derksen10, Plamondon10b, Nagao10} with the representation/categorical methods,
and in general by \cite{Gross14} with the scattering diagram method.

\begin{thm}[{\cite[Cor.~5.5]{Gross14}}]
\label{2thm:Csign1}
Conjectures \ref{2conj:Fconst1}  and \ref{2conj:Csign1} hold
for any $\bfSigma$ and $t_0$.
\end{thm}

The second conjecture we consider  is the following:

\begin{conj}[{Laurent positivity \cite[\S 3]{Fomin07}, \cite[\S 3]{Fomin02}}]
\index{Laurent positivity}
\label{2conj:Fpositive1}
Every $F$-polynomial $F_{i;t}(\bfy)\in \bbZ[\bfy]$ has no negative 
coefficients.
\end{conj}

Through the separation formula, this is equivalent to the earlier conjecture in \cite{Fomin02} on the positivity of the coefficients
in the Laurent polynomial expressions for $x$-variables.
This conjecture was proved for  surface type
by \cite{Musiker09}, for  acyclic type by \cite{Kimura12},
in the  skew-symmetric case by \cite{Lee15,Davison16},
with various methods,
and in general by \cite{Gross14} with the scattering diagram method.

\begin{thm}[{\cite[Theorem 4.10]{Gross14}}]
\label{2thm:Fpositive1}
Conjecture \ref{2conj:Fpositive1} holds
for any  $\bfSigma$ and $t_0$.
\end{thm}

From these conjectures (now theorems)
many other conjectures
in \cite{Fomin07} and new results follow (e.g., \cite{Nakanishi11a, Cao17, Cao18, Nakanishi19}).
Therefore, they are considered to be in the heart of cluster algebra theory.

As mentioned in Introduction,
the purpose of the part is
to digest and reconstruct the proofs of these conjectures in \cite{Gross14}.
Therefore, we temporally forget Theorems
\ref{2thm:Csign1} and \ref{2thm:Fpositive1},
and keep Conjectures  \ref{2conj:Fconst1},
 \ref{2conj:Csign1},  and \ref{2conj:Fpositive1}
as conjectures until we prove them here.
 
 \begin{rem}
Theorem \ref{2thm:Csign1}
 was   proved in 
 \cite{Gross14} with some toric geometrical setting.
 In contrast,
  we prove Conjecture  \ref{2conj:Csign1}
 without any toric geometrical setting.
\end{rem}

\newpage
\section{More about $C$- and $G$-matrices}
In this section we study more about $C$- and $G$-matrices,
namely, the tropical part of a cluster pattern.
Here we only consider a $B$-pattern $\bfB$, and we do not refer to a cluster pattern $\bfSigma$.
Therefore, we regard that $C$- and $G$-matrices are directly  associated with
a $B$-pattern $\bfB$.

\subsection{Mutations and second duality}

Let $\bfB$ be any $B$-pattern, and let
$\bfC^{t_0}=\bfC$ and $\bfG^{t_0}=\bfG$
are the $C$- and $G$-patterns of $\bfB$ with
a given initial vertex $t_0$. 
In view of Proposition \ref{2prop:ytrop1}, we introduce the following notion.
\begin{defn}[Tropical sign]
\label{2defn:tropicalsign1}
Suppose that
 a $c$-vector $\bfc_{i;t}^{t_0}=\bfc_{i;t}$ is either positive or negative.
 Then, the common sign $\varepsilon_{i;t}^{t_0}=\varepsilon_{i;t}\in \{1,-1\}$ 
of all nonzero components of $\bfc_{i;t}$
is called the \emph{tropical sign\/} of $\bfc_{i;t}$. \index{tropical sign}
\end{defn}

\begin{ex}
(a).
For the initial vertex $t_0$, we have $C_{t_0}=I$. Therefore, we  have
\begin{align}
\varepsilon_{i;t_0}=1,
\quad
(i=1,\dots,n).
\end{align}
\par
(b). Let $t, t'\in \bbT_n$ be vertices that are $k$-adjacent.
Then, by \eqref{2eq:ck1},
we have
\begin{align}
\label{2eq:epk1}
\varepsilon_{k;t'}=-\varepsilon_{k;t}.
\end{align}
\end{ex}

\begin{prop}
\label{2prop:dual1}
Suppose that the sign-coherence conjecture holds.
Then, the following facts hold.
 \par
 (a).
 For any $t\in \bbT_n$,  we have
 \par
(Unimodularity \cite[Prop.~4.2]{Nakanishi11a})
\index{unimodularity}
 \begin{align}
 \label{2eq:unimod1}
 |C_{t}|=|G_{t}|\in \{1, -1\},
 \end{align}
 \par
(Second duality  \cite[Eq.~(3.11)]{Nakanishi11a}) \index{duality!second}
 \begin{align}
 \label{2eq:dual1}
 D^{-1}G_{t}^{T}DC_{t}=I,
 \end{align}
 or, equivalently,
 \begin{align}
 \label{2eq:dual12}
 D^{-1}C_{t}^{T}DG_{t}=I,
 \end{align}
 where 
 $D$ is a common skew-symmetrizer of $\bfB$.
 \par
 (b).
For any $t,t'\in \bbT_n$ that are  $k$-adjacent,
we have
\par
 (Mutations \cite[Prop.~1.3]{Nakanishi11a})
 \index{mutation!of $C$- and $G$-matrices}
\begin{align}
\label{2eq:cmut5}
C_{t'}&=C_t(J_k + [\varepsilon_{k;t}B_t]_+^{k \bullet}),
\\
\label{2eq:gmut5}
G_{t'}&=G_t(J_k + [-\varepsilon_{k;t}B_t]_+^{\bullet k}).
\end{align}
\end{prop}

Since we need some details of the proof later,
we present it below.

\begin{proof}
We first prove (b).
By the column sign-coher\-ence of $C_{t}$,
the tropical sign $\varepsilon_{k;t}$ is defined
for any $k$.
Then,
we have
\begin{align}
   [-\varepsilon_{k;t} C_t]_+^{\bullet k}=O.
 \end{align}
We set $\varepsilon=\varepsilon_{k;t}$
in the $\varepsilon$-expressions  of mutations \eqref{2eq:cmut3} and \eqref{2eq:gmut2}.
The resulting relations
 are
  \eqref{2eq:cmut5} and \eqref{2eq:gmut5}.

\par

We prove (a) by the induction on $t$
along  $\bbT_n$ starting from $t_0$.
Let us introduce  the following statement:
\begin{itemize}
\item[$(a)_d$.] The claim (a) holds for any $t \in \bbT_n$ such that $d(t_0,t)= d$.
\end{itemize}

First, $(a)_0$ holds because $C_{t_0}=G_{t_0}=I$.
Next, suppose that $(a)_d$ holds for some $d$.
Let $t, t'\in \bbT_n$ be vertices  that are $k$-adjacent such that
  $d(t_0,t)=d$  and $d(t_0,t')=d+1$.
Let us  temporally set the matrices in  \eqref{2eq:cmut5} and \eqref{2eq:gmut5} as
\begin{align*}
P=J_k + [\varepsilon_{k;t}B_t]_+^{k \bullet},
\quad
Q=J_k + [-\varepsilon_{k;t}B_t]_+^{\bullet k}.
\end{align*}
Then, by Lemma \ref{2lem:square1}, we have
\begin{align}
\label{2eq:detP1}
P^2=I,
\quad
|P|=|Q|=-1.
\end{align}
Thus, by the induction assumption  $ |C_{t}|=|G_{t}|=\pm1$, we have
$ |C_{t'}|=|G_{t'}|=\mp1$.
Also, by
\eqref{2eq:DBD1},
we have
\begin{align}
DPD^{-1}=Q^T.
\end{align}
Using the induction assumption $ D^{-1}(G_{t})^{T}DC_{t}=I$, we have
\begin{align}
 D^{-1}G_{t'}^{T}DC_{t'}=
 D^{-1} (Q^T G_{t}^{T})D(C_tP)
 =P(D^{-1}G_{t}^{T}DC_t)P=P^2=I.
\end{align}
Therefore, $(a)_{d+1}$ holds.
\end{proof}

\begin{rem} 
\label{2rem:sign1}
For the later use,
we record the following facts in the above proof.
\begin{itemize}
\item
To prove (b), we only use the sign-coherence of $C_t$ for $t$ therein.
\item
To prove $(a)_d \Longrightarrow (a)_{d+1}$,
we only use the sign-coherence of $C_t$ such that $d(t_0,t)=d$.
\end{itemize}
\end{rem}

The following  expressions of formulas
\eqref{2eq:cmut5} and \eqref{2eq:gmut5}
by $c$- and $g$-vectors are also useful.
\begin{prop}
Suppose that the sign-coherence conjecture holds.
Then, we have
\begin{align}
\label{2eq:cmut7}
\bfc_{i;t'}&=
\begin{cases}
\displaystyle
-\bfc_{k;t}
& i=k,
\\
\bfc_{i;t}
+[\varepsilon_{k;t}b_{ki;t}]_+\bfc_{k;t}
& i \neq k,
\end{cases}
\\
\label{2eq:gmut7}
\bfg_{i;t'}&=
\begin{cases}
\displaystyle
-\bfg_{k;t}
+
\sum_{j=1}^n
[-\varepsilon_{k;t}b_{j k;t}]_+\bfg_{j;t}
& i=k,
\\
\bfg_{i;t}
& i \neq k.
\end{cases}
\end{align}
\end{prop}

We note that
the matrices $C_t$ and $B_{t_0}$ in turn determine $B_t$.
\begin{prop}[{\cite[Eq.~(2.9)]{Nakanishi11a}}]
\label{2prop:cbc1}
Suppose that the sign-coherence conjecture holds.
Then, the following equality holds:
\begin{align}
\label{2eq:cbc1}
C_t^T(DB_{t_0})C_t
=
DB_t.
\end{align}
\end{prop}
\begin{proof}
By  the dualities \eqref{2eq:dual0} and \eqref{2eq:dual12}, we have
\begin{align}
C_t^TDB_{t_0}C_t
=C_t^TDG_tB_t
=D(D^{-1}C_t^TDG_t)B_t
=
DB_t.
\end{align}
\end{proof}

\subsection{Dual mutations and third duality}
\label{2subsec:dual1}
Let  $\bfB=\{ B_t\}_{t\in \bbT_n}$ be any $B$-pattern,
and let $t_0$ be a given initial vertex.
Since we now vary the initial vertex below,
we  write the corresponding $C$- and $G$-matrices 
as $C_{t}^{t_0}$ and $G_{t}^{t_0}$
 as in Remark
\ref{2rem:notation1}.

We note that the mutation \eqref{2eq:bmut2}
is compatible with the matrix transpose.
Namely,
if $B'=\mu_k(B)$, then $B'{}^T=\mu_k(B^T)$.
Therefore, ${\bfB}^T:=\{ B_t^T\}_{t\in \bbT_n}$
is also a $B$-pattern,
which is called the \emph{transpose of $\bfB$}. \index{$B$-pattern!transpose of}
We temporarily write the associated $C$- and $G$-matrices
as $\tilde{C}_{t}^{t_0}$, $\tilde{G}_{t}^{t_0}$.

We borrow the following notation from \cite{Fujiwara18}:
For any matrix $M$, $\varepsilon_{k\bullet}(M)$ (resp.~$\varepsilon_{\bullet k}(M)$)
stands for
the common sign of the $k$th row (resp.~ $k$th column) assuming that it is
either positive or negative.
For example, the tropical sign $\varepsilon_{i;t}$ in
Definition \ref{2defn:tropicalsign1}
is written as $\varepsilon_{\bullet i}(C^{t_0}_t)$.

\begin{prop}
\label{2prop:dual2}
Suppose that the sign-coherence conjecture holds.
Then, the following facts hold:
\par
(a). For any $t_0,t \in \bbT_n$,  we have
\par
 (Third duality \cite[Eq.~(1.13)]{Nakanishi11a})
 \index{duality!third}
\begin{align} 
\label{2eq:dual2}
C_{t}^{t_0} &= (\tilde{G}_{t_0}^{t})^T,\\
\label{2eq:dual3}
G_{t}^{t_0} &= (\tilde{C}_{t_0}^{t})^T.
\end{align}
In particular, each $G$-matrix $G_{t}^{t_0}$ is
row sign-coherent, and the equality 
\begin{align}
\varepsilon_{k\bullet}(G_{t}^{t_0}) 
= \varepsilon_{\bullet k}(\tilde{C}_{t_0}^{t})
\end{align}
holds.
\par
(b).
For any $t_0,t_1,t\in \bbT_n$ such that $t_0$ and $t_1$ are  $k$-adjacent,
we have
\par
 (Dual mutations \cite[Prop.~1.4, Eq.~(4.2)]{Nakanishi11a}, \cite[Prop.~3.6]{Fujiwara18})
 \index{mutation!dual}
\begin{align}
\label{2eq:cmut6}
C^{t_1}_{t}&=(J_k + [-{\varepsilon}_{k\bullet}(G^{t_0}_t )B_{t_0}]_+^{k \bullet})C^{t_0}_t,
\\
\label{2eq:gmut6}
G^{t_1}_{t}&=(J_k + [{\varepsilon}_{k\bullet}(G^{t_0}_t )B_{t_0}]_+^{\bullet k})G^{t_0}_t.
\end{align}
Here, ${\varepsilon}_{k\bullet}(G^{t_0}_t )$
is well-defined thanks to (a).

\end{prop}

Again, we need some details of the proof later.
Furthermore, the statement and the proof in \cite{Nakanishi11a} are a little indirect
in the present context.
Thus, we give a   more transparent proof, 
still in the  spirit of  \cite{Nakanishi11a}.

\begin{proof}
We prove the claims by the induction on the distance
$d(t_0,t)$ in $\bbT_n$, where we vary  $t_0$ for (a) and $t$ for (b).

Let us introduce  the following claims:
\begin{itemize}
\item[$(a)_d$.] The claim (a) holds for any $t_0,t \in \bbT_n$ such that $d(t_0,t)= d$.
\item[$(b)_d$.] The claim (b) holds for any $t_0,t_1,t \in \bbT_n$ such that $d(t_0,t)= d$
and $t_0$ and $t_1$ are  $k$-adjacent.
\end{itemize}

We prove the claims in the following order,
\begin{align}
\label{2eq:ind1}
(a)_0 \Longrightarrow 
(b)_0 \Longrightarrow 
(a)_1 \Longrightarrow 
(b)_1 \Longrightarrow 
(a)_2 \Longrightarrow 
\cdots,
\end{align}
assuming all  preceding claims.

First, we show that $(a)_0$ and $(b)_0$ hold.
The claim $(a)_0$, where $t_0=t$, trivially holds, because all relevant matrices are the identity matrix $I$.
Let us prove $(b)_0$, where $t_0=t$.
By assumption,  $t_0$ and $t_1$ are $k$-adjacent.
Then, by Proposition \ref{2prop:dual1} (b) and \eqref{2eq:bmut3},
we have
\begin{alignat}{3}
C^{t_1}_{t_0}&=C^{t_1}_{t_1}(J_k + [B_{t_1}]^{k\bullet}_+)
=(J_k + [-B_{t_0}]^{k\bullet}_+)C^{t_0}_{t_0},
\\
G^{t_1}_{t_0}&=G^{t_1}_{t_1}(J_k + [-B_{t_1}]^{\bullet k}_+)=(J_k + [B_{t_0}]^{\bullet k}_+)
G^{t_0}_{t_0}.
\end{alignat}
Thus,  $(b)_0$ holds.

Next, assuming the claims in \eqref{2eq:ind1} up to  $(b)_d$, we show $(a)_{d+1}$.
Assume $d(t_0,t)=d$ and $d(t_1,t)=d+1$ in \eqref{2eq:cmut6}. Take the transpose of \eqref{2eq:cmut6}.
Then, we apply \eqref{2eq:dual2},
 which is valid thanks to $(a)_{d}$,
  to its right hand side.
We have
\begin{align}
(C^{t_1}_t)^T
= \tilde{G}^t_{t_0}(J_k + [-\varepsilon_{\bullet k}(\tilde{C}^t_{t_0})
B_{t_0}^T]_+^{\bullet k}
).
\end{align}
The right hand side is $\tilde{G}^t_{t_1}$ by \eqref{2eq:gmut5}. Thus, we have 
\eqref{2eq:cmut6}. The other case is similar.

Finally, assuming the claims in \eqref{2eq:ind1} up to  $(a)_{d+1}$, we show $(b)_{d+1}$.
This is the  nontrivial part.
It is enough to concentrate on the following situation.
Let $t_0$, $t_1$, $t$, $t'\in \bbT_n$ be vertices  such that
$d(t_0,t)=d$, $d(t_0,t')=d+1$,
where
$t_1$ and $t_0$ are $k$-adjacent,
while
$t$ and $t'$ are $\ell$-adjacent.
They are depicted as follows, where
$t_1$ could be between $t_0$ and $t$.

\begin{align*}
\begin{picture}(120,0)(40,35)
\put(40,40){\circle*{3}}
\put(70,40){\circle*{3}}
\put(130,40){\circle*{3}}
\put(160,40){\circle*{3}}
\put(40,40){\line(1,0){120}}
\put(38,30){$t_1$}
\put(68,30){$t_0$}
\put(128,30){$t$}
\put(158,30){$t'$}
\put(54,45){$k$}
\put(144,45){$\ell$}
\end{picture}
\end{align*}

By  \eqref{2eq:cmut5} and \eqref{2eq:cmut6} with $(b)_{d}$,
we obtain
\begin{align}
\label{2eq:cc1}
\begin{split}
C^{t_1}_{t'}&=C^{t_1}_t(J_{\ell} + [\varepsilon_{\bullet \ell}(C^{t_1}_t)B_t]_+^{\ell \bullet})\\
&=(J_k + [-{\varepsilon}_{k\bullet}(G^{t_0}_t )B_{t_0}]_+^{k \bullet})C^{t_0}_t
(J_{\ell} + [\varepsilon_{\bullet \ell}(C^{t_1}_t)B_t]_+^{\ell \bullet}).
\end{split}
\end{align}
By  $(a)_{d+1}$, 
the sign ${\varepsilon}_{k\bullet}(G^{t_0}_{t'} )$ is well-defined.
Let us temporally assume the identity:
\begin{align}
\label{2eq:Jid1}
\begin{split}
&\
(J_k + [-{\varepsilon}_{k\bullet}(G^{t_0}_t )B_{t_0}]_+^{k \bullet})C^{t_0}_t
(J_{\ell} + [\varepsilon_{\bullet \ell}(C^{t_1}_t)B_t]_+^{\ell \bullet})\\
=&\
(J_k + [-{\varepsilon}_{k\bullet}(G^{t_0}_{t'} )B_{t_0}]_+^{k \bullet})C^{t_0}_t
(J_{\ell} + [\varepsilon_{\bullet \ell}(C^{t_0}_t)B_t]_+^{\ell \bullet}).
\end{split}
\end{align}
Then, the right hand side of \eqref{2eq:cc1}
reduces to
$(J_k + [-{\varepsilon}_{k\bullet}(G^{t_0}_{t'} )B_{t_0}]_+^{k \bullet})C^{t_0}_{t'}$,
which is the desired result for \eqref{2eq:cmut6} of $(b)_{d+1}$.
Similarly, from the identity
\begin{align}
\label{2eq:Jid2}
\begin{split}
&\
(J_k + [{\varepsilon}_{k\bullet}(G^{t_0}_t )B_{t_0}]_+^{\bullet k })G^{t_0}_t
(J_{\ell} + [-\varepsilon_{\bullet \ell}(C^{t_1}_t)B_t]_+^{ \bullet\ell})\\
=&\
(J_k + [{\varepsilon}_{k\bullet}(G^{t_0}_{t'} )B_{t_0}]_+^{ \bullet k})G^{t_0}_t
(J_{\ell} + [-\varepsilon_{\bullet \ell}(C^{t_0}_t)B_t]_+^{ \bullet\ell}),
\end{split}
\end{align}
we obtain  \eqref{2eq:gmut6} of $(b)_{d+1}$.
It remains to prove \eqref{2eq:Jid1} and \eqref{2eq:Jid2}.
To do it, we need to know a precise relation between signs
${\varepsilon}_{k\bullet}(G^{t_0}_t )$ and ${\varepsilon}_{k\bullet}(G^{t_0}_{t'} )$,
and also $\varepsilon_{\bullet \ell}(C^{t_1}_t)$ and $\varepsilon_{\bullet \ell}(C^{t_0}_t)$.

The following facts hold:
\begin{itemize}
\item[(i).]
By \eqref{2eq:gmut5}, $G^{t_0}_t $ and $G^{t_0}_{t'} $ differ only in the $\ell$th column.
\item[(ii).]
By $(b)_d$, $C^{t_0}_t$ and $C^{t_1}_t$ differ only in the $k$th row.
\item[(iii).]
By the duality \eqref{2eq:dual1}, 
the $k$th row of $G^{t_0}_t $ has only  a unique component $\varepsilon$ at $\ell$th column
if and only
the  $\ell$th column of $C^{t_0}_t $ has a unique nonzero component $\varepsilon'$ at $k$th row.
Furthermore, if it happens, $\varepsilon=\varepsilon'\in \{1, -1\}$,
thanks to the unimodularity \eqref{2eq:unimod1}.

\end{itemize}

Now we consider two cases.

\emph{Case 1.} Suppose that the  $k$th row of $G^{t_0}_t $ has a nonzero component at some column
other than the $\ell$th column. Then, from Facts (i)--(iii) above and the row and column sign-coherence
of $G$- and $C$-matrices in $(a)_{d+1}$, we conclude that
\begin{align}
{\varepsilon}_{k\bullet}(G^{t_0}_{t'} )={\varepsilon}_{k\bullet}(G^{t_0}_t ),
\quad
\varepsilon_{\bullet \ell}(C^{t_1}_t)= \varepsilon_{\bullet \ell}(C^{t_0}_t).
\end{align}
Therefore, \eqref{2eq:Jid1} and \eqref{2eq:Jid2} hold trivially.

\emph{Case 2.} 
Suppose that the $k$th row of $G^{t_0}_t $ has  only one nonzero component that is  at the $\ell$th column.
Let us write this component as $\varepsilon\in \{ 1, -1\}$.
As mentioned in Fact (iii), 
$\varepsilon$ coincides with 
the only one nonzero component 
in the  $\ell$th column of $C^{t_0}_t $ that is at the $k$th row.
Then, by \eqref{2eq:gmut5}  and $(b)_d$, we have
\begin{align}
{\varepsilon}_{k\bullet}(G^{t_0}_{t'} )=-\varepsilon,
\quad
{\varepsilon}_{k\bullet}(G^{t_0}_{t} )=\varepsilon,
\quad
\varepsilon_{\bullet \ell}(C^{t_1}_t)=-\varepsilon,
\quad
\varepsilon_{\bullet \ell}(C^{t_0}_t)=\varepsilon.
\end{align}
Let us prove \eqref{2eq:Jid1}, which is now written as
\begin{align}
\label{2eq:Jid3}
\begin{split}
&\
(J_k + [-{\varepsilon}B_{t_0}]_+^{k \bullet})C^{t_0}_t
(J_{\ell} + [-{\varepsilon}B_t]_+^{\ell \bullet})\\
=&\
(J_k + [{\varepsilon}B_{t_0}]_+^{k \bullet})C^{t_0}_t
(J_{\ell} + [\varepsilon B_t]_+^{\ell \bullet}).
\end{split}
\end{align}
By \eqref{2eq:Nsq1},
it is rewritten as
\begin{align}
\label{2eq:Jid4}
\begin{split}
&\
(J_k + [{\varepsilon}B_{t_0}]_+^{k \bullet})(J_k + [-{\varepsilon}B_{t_0}]_+^{k \bullet})C^{t_0}_t
\\
=&\
C^{t_0}_t
(J_{\ell} + [\varepsilon B_t]_+^{\ell \bullet})(J_{\ell} + [-{\varepsilon}B_t]_+^{\ell \bullet}).
\end{split}
\end{align}
After some manipulation,
it  reduces to
\begin{align}
\label{2eq:bc3}
(B_{t_0} C_{t}^{t_0})^{k \bullet }
=C_t^{t_0}(B_t)^{\ell\bullet}.
\end{align}
By the assumption, we have
$(C_t^{t_0})^{\bullet \ell}=(G_{t}^{t_0})^{k\bullet}=
\varepsilon E_{k\ell}$, where $E_{k\ell}$ is the matrix whose entries
are zero except for the $(k,\ell)$-entry that is 1.
Then,
the right hand side of \eqref{2eq:bc3} can be rewritten as
\begin{align}
(C_t^{t_0})^{\bullet \ell} B_t=\varepsilon E_{k\ell} B_t
=(G_{t}^{t_0})^{k\bullet} B_t
=(G_{t}^{t_0}B_t)^{k\bullet} .
\end{align}
Thus, \eqref{2eq:bc3} reduces to
the duality \eqref{2eq:dual0}.
Therefore, \eqref{2eq:Jid1} is proved.
Similarly, skipping details a little, \eqref{2eq:Jid2} reduces to
\begin{align}
\label{2eq:bc2}
(B_{t_0})^{ \bullet k} G_{t}^{t_0}
=(G_t^{t_0}B_t)^{\bullet \ell}.
\end{align}
Then, by repeating a similar argument,
again \eqref{2eq:bc2} reduces to the duality \eqref{2eq:dual0}.
Therefore, \eqref{2eq:Jid2} is proved.
\end{proof}

\begin{rem} 
\label{2rem:sign2}
For the later use we record the following facts in the above proof.
\begin{itemize}
\item
To prove $(a)_d$, $(b)_d$ $\Longrightarrow (a)_{d+1}$, we only use the sign-coherence of $\tilde{C}_{t_0}^{t}$ such that $d(t_0,t)=d$.
\item
To prove $(b)_d$, $(a)_{d+1}$ $\Longrightarrow (b)_{d+1}$,
we only use the sign-coherence of $C_{t}^{t_0}$,
$C_{t}^{t_1}$,  $\tilde{C}_{t_0}^{t}$, $\tilde{C}_{t_0}^{t'}$ such that $d(t_0,t)=d$,
$d(t_0,t')=d+1$, $d(t_1,t)=d\pm1$.
\end{itemize}
\end{rem}

\begin{rem}
Combining three dualities \eqref{2eq:dual0},  \eqref{2eq:dual1},  \eqref{2eq:dual2}, 
we have various relations among $C$- and $G$-matrices therein.
For example, by \eqref{2eq:dual0} and \eqref{2eq:dual2}, we have
\begin{align}
B_{t_0}C_{t}^{t_0}=(B_{t}^T \tilde{C}_{t_0}^{t})^T,
\quad
G_{t}^{t_0}B_{t_0}=(\tilde{G}_{t_0}^{t}B_{t}^T )^T.
\end{align}
\end{rem}

The $\hat{C}$-matrices in \eqref{2eq:gbc2} mutate as a ``hybrid'' of 
$C$- and $G$-matrices in the following sense.

\begin{prop}
Suppose that the sign-coherence conjecture holds.
Let $\hat{C}_{t}^{t_0}=B_{t_0}{C}_{t}^{t_0}$ and $\hat{C}_{t}^{t_1}=B_{t_1}{C}_{t}^{t_1}$.
Then, 
for any $t_0,t_1,t,t'\in \bbT_n$ such that $t_0$ and $t_1$,
and also, $t$ and $t'$ are  $k$-adjacent, respectively,
we have
\begin{align}
\label{2eq:cmut12}
\hat{C}_{t'}^{t_0}&=\hat{C}_t^{t_0}(J_k + [{\varepsilon}_{\bullet k}(C^{t_0}_t )B_t]_+^{k \bullet}),
\\
\label{2eq:gmut12}
\hat{C}^{t_1}_{t}&=(J_k + [{\varepsilon}_{k\bullet}(G^{t_0}_t )B_{t_0}]_+^{\bullet k})\hat{C}^{t_0}_t.
\end{align}

\end{prop}
\begin{proof}
The first formula is obtained from  \eqref{2eq:gbc2}
by multiplying $B_{t_0}$ from the left.
The second one is obtained from  \eqref{2eq:gmut6}
by multiplying $B_{t}$ from the right.
\end{proof}

\subsection{Principal extension of $B$-patterns}
\label{2subsec:principal1}

For any $B$-pattern $\bfB$ of rank $n$ and a given initial vertex $t_0\in \bbT_n$,
we introduce a $2n \times 2n$  skew-symmetrizable matrix 
\begin{align}
\label{2eq:principleB1}
\overline{B}_{t_0}=
\begin{pmatrix}
B_{t_0} & -I\\
I & O\\
\end{pmatrix},
\end{align}
which is called the \emph{principal extension of $B_{t_0}$}.
It is important that $\overline{B}_{t_0}$ is nonsingular.
Choose any point $\overline{t}_0\in \bbT_{2n}$.
We naturally identify $\bbT_n$ as a subtree of $\bbT_{2n}$
by identifying $t_0$ with $\overline{t}_0$, and also
the edges with
 labels $k=1,\dots,n$ in $\bbT_n$ and $\bbT_{2n}$.
Consider a $B$-pattern 
$\overline\bfB=\{ \overline{B}_t\}_{t\in \bbT_{2n}}$ of rank $2n$ generated
by $\overline{B}_{t_0}$,
which is called \emph{the principal extension of $\bfB$ with
the initial vertex $t_0$}. \index{$B$-pattern!principal extension}\index{principal extension!of a $B$-pattern}

The following   stability properties  have  repeatedly appeared
 in the  literatures in various forms.
Here, we quote from \cite{Fujiwara18}.

\begin{prop}[{e.g., \cite[Theorem~4.2, Remark~4.4]{Fujiwara18}}]
\label{2prop:principal1}
Let $C_t$, $G_t$, $F_{i;t}(\bfy)$ be $C$- and $G$-matrices, and $F$-polynomials
of $\bfB$ with the initial point $t_0\in \bbT_n$.
Let $\overline{C}_t$, $\overline{G}_t$, $\overline{F}_{i;t}(\overline{\bfy})$ be $C$- and $G$-matrices, and $F$-polynomials
of $\overline{\bfB}$ with the same  initial point $t_0 \in \bbT_n \subset \bbT_{2n}$,
where $\overline{\bfy}=(y_1,\dots,y_{2n})$.
Then, the following relations hold for any $t\in \bbT_n\subset \bbT_{2n}$:
\begin{align}
\label{2eq:Cbar1}
\overline{C}_t&=
\begin{pmatrix}
C_t & Z_t\\
O & I
\end{pmatrix},
\\
\label{2eq:Gbar1}
\overline{G}_t&=
\begin{pmatrix}
G_t & O\\
O & I
\end{pmatrix},
\\
\overline{F}_{i;t}(\overline\bfy)
&=
\begin{cases}F_{i;t}(\bfy)
& i=1,\dots,n,
\\
1
& i=n+1,\dots, 2n,
\end{cases}
\end{align}
where $Z_t$ is a certain matrix. Moreover,
$Z_t=O$ if the  sign-coherence of $C$-matrices for $\bfB$ holds.
\end{prop}

These equalities can be  proved by the induction on $t$ along $\bbT_n$ from $t_0$
 \cite[Theorem~4.2]{Fujiwara18}.

By Proposition \ref{2prop:principal1}, any information 
on $C_t$, $G_t$, $F_{i;t}(\bfy)$
can be extracted from the one on 
$\overline{C}_t$, $\overline{G}_t$, $\overline{F}_{i;t}(\overline{\bfy})$.

\subsection{$G$-cones and $G$-fans}

From now on,
we study  the geometric aspect of $G$-matrices.

We recall some  notions from convex geometry.

\begin{defn}[Cone]
Let $M\simeq \bbZ^n$ be a lattice of rank $n$.
Let $M_{\bbR}=M\otimes_{\bbZ} \bbR\simeq \bbR^n$ be the extension to an $n$-dimensional
vector space over $\bbR$.
Let $N=\mathrm{Hom}_{\bbZ}(M,\bbZ)$
and $N_{\bbR}=\mathrm{Hom}_{\bbR}(M_{\bbR},\bbR)$
be their duals.
Let $\langle \ , \ \rangle$ be  the canonical paring
$N_{\bbR}\times M_{\bbR} \rightarrow \bbR$.
\begin{itemize}
\item 
For given elements $a_1,\dots,a_r\in M \subset M_{\bbR}$,
the subset of $M_{\bbR}$ defined by 
\begin{align}
\sigma(a_1,\dots,a_r):=
\bbR_{\geq 0} a_1 + \cdots \bbR_{\geq 0} a_r
\end{align} 
is called a \emph{convex rational polyhedral cone}, or simply, a \emph{cone} 
generated by $a_1,\dots,a_r$.
\index{cone}\index{cone!convex rational polyhedral}
The set $\sigma(\emptyset):=\{0\}$ is also a cone.
\item
A cone $\sigma$ is said to be \emph{strongly convex\/} if \index{cone!strongly convex}
$\sigma\cap (-\sigma)=\{0\}$.
\item
A cone is \emph{simplicial\/} if it is generated \index{cone!simplicial}
by $\bbZ$-linearly independent elements in $M$.
Thus, a simplicial cone is strongly convex.
\item
A cone is \emph{nonsingular\/} if it is generated \index{cone!nonsingular}
by a subset of a $\bbZ$-basis in $M$.
Thus, a nonsingular cone is simplicial.
\item The \emph{dimension of a cone $\sigma$} is the dimension of the  subspace 
of $M_{\bbR}$ spanned by $\sigma$.
\item The \emph{dual cone\/} $\sigma^{\vee}$ of a cone $\sigma$ is  defined by \index{cone!dual}
$$
\sigma^{\vee}=\{ u\in N_{\bbR} \mid \langle u, a \rangle \geq 0\ \text{for any $a\in \sigma$})\},
$$
which is a cone in $N_{\bbR}$.
\item
A subset $\tau$ of a cone $\sigma$ is called a \emph{face\/} of $\sigma$ \index{face (of a cone)}
if there is some $u\in \sigma^{\vee}$ such that $\tau=\sigma \cap u^{\perp}$,
where $u^{\perp}=\{ a\in M_{\bbR} \mid \langle u, a \rangle = 0\}$.
In particular, 
for a simplicial cone $\sigma$
with a $\bbZ$-linearly independent
 generating set $S$,
any face of $\sigma$
 is
a cone generated by a subset of $S$.
\end{itemize}
\end{defn}

\begin{ex}
For $M=\bbZ^2$ and $M_{\bbR}=\bbR^2$, $\sigma(\bfe_1)$ and 
$\sigma(\bfe_1, \bfe_2)$ are strongly convex cones,
while $\sigma(\bfe_1,-\bfe_1)=\bbR\bfe_1$ and
$\sigma(\bfe_1,-\bfe_1,\bfe_2,-\bfe_2)=\bbR^2$ are cones
that are not strongly convex.
The cone $\sigma=\sigma(\bfe_1, \bfe_2)$
has four faces $\sigma$, $\sigma(\bfe_1)$, $\sigma(\bfe_2)$, $\{0\}$,
while
the cone $\sigma=\sigma(\bfe_1,-\bfe_1)$
has the unique face, which is $\sigma$.
\end{ex}

\begin{defn}[Fan]
\index{fan}
\par
\begin{itemize}
\item
A \emph{fan\/} $\Delta$ in $M_{\bbR}\simeq \bbR^n$ is a nonempty (possibly infinite) set of strongly convex  cones in $M_{\bbR}$
satisfying the following conditions:
\begin{itemize}
\item[(i).]
If $\tau$ is a face of $\sigma\in \Delta$, then  $\tau\in\Delta$.
\item[(ii).]
If $\sigma, \tau \in \Delta$, then $\sigma\cap \tau \in \Delta$.
\end{itemize}
\item
A subset $|\Delta|=\bigcup_{\sigma\in \Delta} \sigma$ of  $M_{\bbR}$ is called the \emph{support\/}
of a fan $\Delta$. 
\item
A fan $\Delta$ is \emph{simplicial\/} if all cones of $\Delta$ are simplicial. \index{fan!simplicial}
\item
A fan $\Delta$ is \emph{nonsingular\/} if all cones of $\Delta$ are nonsingular. \index{fan!nonsingular}
Thus, a nonsingular fan is simplicial.
\item
A fan $\Delta$ is \emph{complete\/} if $|\Delta|=M_{\bbR}$. \index{fan!complete}
\end{itemize}
\end{defn}

We apply these geometric notions to $G$-matrices following \cite{Reading11,Reading12}.
In the rest of this section
\emph{we temporarily assume the sign-coherence of $C$-matrices\/}
and study some basic properties of $G$-cones.

Let $\bfB$ be any $B$-pattern,
and let
 $\bfG^{t_0}=\bfG$ be the $G$-pattern of $\bfB$ with
a given initial point $t_0$.

\begin{defn}[$G$-cone]
For each  $G$-matrix $G_t$,
the cone spanned by its $g$-vectors
in $\bbR^n$
\begin{align}
\sigma(G_t):=\sigma(\bfg_{1;t}, \dots, \bfg_{n;t})
\end{align}
is called a \emph{$G$-cone}. \index{cone!$G$-}\index{$G$-cone}
The interior of $\sigma(G_t)$ is denoted by
$\sigma^{\circ}(G_t)$.
Each  face of $\sigma(G_t)$ spanned by
$\bfg_{j;t}$'s excluding  the $i$th $g$-vector
$\bfg_{i;t}$
is denoted by $\sigma_i(G_t)$ for $i=1,\dots,n$.
\end{defn}

By the unimodularity
\eqref{2eq:unimod1},
every $G$-cone $\sigma(G_t)$
is a nonsingular  cone,
and its faces $\sigma_i(G_t)$ ($i=1,\dots,n$)
are cones of codimension one.

From now on, let us especially take a common skew-symmetrizer
$D$ of the $B$-pattern $\bfB$
in the following form:
\begin{align}
\label{2eq:Dmat1}
D=\mathrm{diag}( {\d}_1^{-1} 
\dots, {\d}_n ^{-1}),
\end{align}
where $\d_1$, \dots, $\d_n$ are positive integers.
(This seemingly awkward choice is suitable to the formulation
of scattering diagrams  in \cite{Gross14} we use later. See Example \ref{2ex:seed1}.)
Accordingly, we introduce the following nonstandard inner product 
$(\ , \ )_{D}$
in $\bbR^n$:
\begin{align}
\label{2eq:Dinner1}
(\bfu, \bfv)_{D}=\bfu^T D \bfv
\quad
(\bfu,\bfv\in \bbR^n).
\end{align}
Below, for a given $B$-pattern $\bfB$,
we  always assume this inner product  $(\cdot, \cdot)_{D}$
 in $\bbR^n$.
For $\bfn\neq \bfzero \in \bbR^n$, $\bfn^{\perp}$
is a hyperplane defined by 
\begin{align}
\bfn^{\perp}:=\{ \bfv \in \bbR^n \mid (\bfn,\bfv )_{D}=0\}.
\end{align}
We say that a normal vector $\bfn$ of a face
$\sigma_i(G_t)$ of a cone $\sigma(G_t)$  is \emph{inward for $\sigma(G_t)$}
if $(\bfn, \bfg_{i;t})_{D}>0$.

In this geometrical setting,
 the duality \eqref{2eq:dual1}/\eqref{2eq:dual12} can be rephrased in the following manner.

\begin{prop}
\label{2prop:duality3}
Suppose that the sign-coherence conjecture holds.
Then,   the following dualities of vectors in $\bbR^n$ hold:
\begin{align}
\label{2eq:dcg1}
(\d_i \bfc_{i;t},\bfg_{j;t})_D&=\delta_{ij},
\\
\label{2eq:cdg1}
(\bfc_{i;t},\delta_j \bfg_{j;t})_D&=\delta_{ij}.
\end{align}
In particular, each  $c$-vector $\bfc_{i;t}$ is a normal vector of
the face  $\sigma_i(G_t)$ of  the cone $\sigma(G_t)$
with respect to the  inner product $(\cdot, \cdot)_{D}$.
Moreover, it is inward for $\sigma(G_t)$.
\end{prop}

Let $\Delta(\bfG^{t_0})=\Delta(\bfG)$ be the set of all faces of all $G$-cones 
$\sigma(G_t)$ ($t\in \bbT_n$) in the $G$-pattern $\bfG^{t_0}$.
Let $t, t'\in \bbT_n$ be vertices that are $k$-adjacent.
Then, by \eqref{2eq:gmut7},
two cones $\sigma(G_t)$ and $\sigma(G_{t'})$ intersect
 each other
in their common face $\sigma_k(G_t)=\sigma_k(G{_{t'}})$ with codimension one.
However, for a cone $\sigma(G_{t''})$ obtained  from $\sigma(G_t)$ after several mutations,
it is not clear at all that $\sigma(G_{t''})$ intersects  $\sigma(G_t)$ only in their common face so that the set $\Delta(\bfG)$ forms a fan.
It was conjectured by  \cite[Conj.~3.14]{Reading11} that this always happens.
Then, it was shown to be true by \cite{Reading12}
under the sign-coherence conjecture.

\begin{thm}[{\cite[Theorem~8.7]{Reading12}}]
\label{2thm:Gfan1}
Suppose that the sign-coherence conjecture holds.
Then,
for any $B$-pattern $\bfB$ and a given initial vertex $t_0$,
the set of cones  $\Delta(\bfG^{t_0})$ is a fan in $\bbR^n$.
\end{thm}

In \cite{Reading12} this was proved in a more general setting of \emph{mutation fans\/} therein.
We provide  a self-contained proof of Theorem \ref{2thm:Gfan1}
in 
 Section \ref{2subsec:proofGfan1} without referring to mutation fans,
 because the tools and  techniques we use therein are also relevant to 
 our main purpose.
 
The resulting fan $\Delta(\bfG^{t_0})$,
which is nonsingular by \eqref{2eq:unimod1}, is
 called the  \emph{$g$-vector fan\/} \index{$g$-vector fan|see{$G$-fan}}
in  \cite{Reading11}.
To simplify a little,
we call it the \emph{$G$-fan\/} of $\bfB$ (or $\bfSigma$) with the initial vertex $t_0$. \index{fan!$G$-}\index{$G$-fan}

Let $\#\bfG^{t_0}$ be the number of \emph{ distinct\/} $G$-matrices in $\bfG^{t_0}$,
which can be finite or infinite.

\begin{prop}
\label{2prop:finite1}
Suppose that the sign-coherence conjecture holds.
Then, the $G$-fan  $\Delta(\bfG^{t_0})$ is complete
if and only if $\#\bfG^{t_0}$ is finite.
\end{prop}
\begin{proof}
This is true by the following topological reason.
For each subset $T$ of $\bbT_n$, we attach a
subset $\Delta_T$ of $\Delta(\bfG^{t_0})$ consisting of the faces
of  all $\sigma(G_t)$ with  $t\in T$.
Let $|\Delta_T|^c$ be the complement of the support $|\Delta_T|$
in $\bbR^n$.
Let $S^{n-1}$ be the unit sphere in $\bbR^n$.
Let $D_T=|\Delta_T|^c\cap S^{n-1}$.
For $T=\{t_0\}$, $D_T$ is topologically a $(n-1)$-dimensional open disk.
We consider a strongly increasing sequence of subsets  of $\bbT_n$,
$T_1=\{t_0\}\subsetneq T_2\subsetneq T_3 \subsetneq \cdots$
with $|T_s|=s$.
Correspondingly, we have a weakly decreasing  sequence of subsets of $S^{n-1}$,
$D_{T_1}\supset D_{T_2}\supset D_{T_3}\supset \cdots$.
Then, $D_{T_s}$ is a disjoint union of a finite number of open disks,
such that
at each process of increasing $T_{s}$ to $T_{s+1}$, one of the followings occurs
 (by adding a $G$-cone):
\begin{itemize}
\item
$D_{T_{s+1}}=D_{T_s}$.
\item
An open disk shrinks.
\item
An open disk splits into two open disks.
\item
An open disk disappears.
\end{itemize}
If $\#\bfG^{t_0}$ is finite, the decreasing process of $D_{T_s}$ should terminate at some $s$.
This occurs only when $D_{T_s}=\emptyset$. Thus, $\Delta(\bfG^{t_0})$ is complete.
If $\#\bfG^{t_0}$  is infinite, the process never terminates.
Then, when $T_s$ converges to  $\bbT_n$, $D_{T_s}$ converges to the (possibly infinite) 
 union of points and (not open, but not necessarily closed) disks.
Therefore, $|\Delta(\bfG^{t_0})|^c$ is nonempty.
\end{proof}

\begin{rem}

We say that a cluster pattern $\bfSigma$ is \emph{of finite type\/} \index{cluster pattern!of finite type}
if there are only finitely many distinct seeds in $\bfSigma$.
It is known, for example, by
\cite[Theorem~5.2]{Nakanishi19},
that
a cluster pattern $\bfSigma$ is of finite type
if and only if $\#\bfG^{t_0}$ is finite for $\bfG^{t_0}$ of 
 $\bfSigma$.
Then, combining it with Proposition \ref{2prop:finite1},
we have that
a cluster pattern $\bfSigma$ is of finite type
if and only if the $G$-fan  $\Delta(\bfG^{t_0})$ is complete
for $\bfG^{t_0}$ of 
 $\bfSigma$.
 Also, we note that the proof of  \cite[Theorem~5.2]{Nakanishi19}
 is based on the sign-coherence of $C$-matrices and the Laurent positivity.
\end{rem}

\subsection{Rank 2 examples of $G$-fans}
\label{2subsec:rank2G1}

In the rank 2 case, one can  confirm Theorem
\ref{2thm:Gfan1} and Proposition \ref{2prop:finite1}
by explicitly calculating $G$-matrices.

Let us arrange a $B$-pattern of rank 2 in the following way:
\begin{align}
\label{2eq:seedmutseq3}
\cdots
\
\mathop{\leftrightarrow}^{\mu_{2}}
\
B_{t_{-2}}
\
\mathop{\leftrightarrow}^{\mu_{1}}
\
B_{t_{-1}}
\
\mathop{\leftrightarrow}^{\mu_{2}}
\
B_{t_{0}}
\
\mathop{\leftrightarrow}^{\mu_{1}}
\
B_{t_{1}}
\
\mathop{\leftrightarrow}^{\mu_{2}}
\
B_{t_{2}}
\
\mathop{\leftrightarrow}^{\mu_{1}}
\
\cdots.
\end{align}
We take $B_{t_{0}}$ as the initial exchange matrix.
Let us present the associated $G$-fan $\Delta(\bfG)$
in  $\bbR^2$ explicitly.
Below the type of a $B$-pattern refers to the type of 
the Cartan matrix $A(B)=(a_{ij})$ associated with  $B=B_{t_0}$ in the convention of \cite{Kac90}, where
\begin{align}
a_{ij}=
\begin{cases}
2& i=j,\\
-|b_{ij}| & i\neq j.
\end{cases}
\end{align}

(I). Finite type.
The $G$-fan $\Delta(\bfG)$ is complete
as stated in Proposition \ref{2prop:finite1}.

(a). Type $A_2$: Let 
\begin{align}
\label{2eq:B1}
B_{t_{0}}=
\begin{pmatrix}
0 & -1 \\
1 & 0\\
\end{pmatrix}.
\end{align}
Along the mutation sequence  $\mu_1$, $\mu_2$, $\cdots$, 
the $G$-matrices are explicitly calculated  as
\begin{align}
\label{2eq:A2G1}
\begin{split}
G_{t_{0}}&=
\begin{pmatrix}
1 & 0 \\
0 & 1\\
\end{pmatrix},
\quad
G_{t_{1}}=
\begin{pmatrix}
-1& 0 \\
0 & 1\\
\end{pmatrix},
\quad
G_{t_{2}}=
\begin{pmatrix}
-1& 0 \\
0 & -1\\
\end{pmatrix},
\\
G_{t_{3}}&=
\begin{pmatrix}
1& 0\\
-1 & -1\\
\end{pmatrix},
\quad
G_{t_{4}}=
\begin{pmatrix}
1& 1\\
-1 & 0\\
\end{pmatrix},
\quad
G_{t_{5}}=
\begin{pmatrix}
0& 1\\
1 & 0\\
\end{pmatrix}.
\end{split}
\end{align}
Both $G_{t_{5}}$ and $G_{t_{0}}$ define the same cone,
corresponding to
 the  celebrated \emph{pentagon periodicity}. \index{pentagon periodicity}
The $G$-fan $\Delta(\bfG)$ is depicted in Figure \ref{2fig:cone1} (a).

(b).  Type $B_2$: Let
\begin{align}
B_{t_{0}}=
\begin{pmatrix}
0 & -1 \\
2 & 0\\
\end{pmatrix}.
\end{align}
We start from the initial $G$-matrix $G_{t_{0}}=I$.
Along the mutation sequence  $\mu_1$, $\mu_2$, $\cdots$, 
the following $g$-vectors appear in this order, showing  periodicity:
\begin{align}
\begin{pmatrix}
-1 \\
0 \\
\end{pmatrix},
\begin{pmatrix}
 0 \\
 -1\\
\end{pmatrix},
\begin{pmatrix}
1\\
-2 \\
\end{pmatrix},
\begin{pmatrix}
 1 \\
 -1\\
\end{pmatrix},
\begin{pmatrix}
1\\
0 \\
\end{pmatrix},
\begin{pmatrix}
0\\
 1\\
\end{pmatrix}.
\end{align}
The $G$-fan $\Delta(\bfG)$ is depicted in Figure \ref{2fig:cone1} (b).

(c). Type $G_2$: Let 
\begin{align}
B_{t_0}=
\begin{pmatrix}
0 & -1 \\
3 & 0\\
\end{pmatrix}.
\end{align}
We start from the initial $G$-matrix $G_{t_0}=I$.
Along the mutation sequence  $\mu_1$, $\mu_2$, $\cdots$, 
the following $g$-vectors appear in this order, showing periodicity:
\begin{align}
\begin{pmatrix}
-1 \\
0 \\
\end{pmatrix},
\begin{pmatrix}
 0 \\
 -1\\
\end{pmatrix},
\begin{pmatrix}
1\\
-3 \\
\end{pmatrix},
\begin{pmatrix}
1\\
-2 \\
\end{pmatrix},
\begin{pmatrix}
2\\
-3 \\
\end{pmatrix},
\begin{pmatrix}
 1 \\
 -1\\
\end{pmatrix},
\begin{pmatrix}
1\\
0 \\
\end{pmatrix},
\begin{pmatrix}
0\\
 1\\
\end{pmatrix}.
\end{align}
The $G$-fan $\Delta(\bfG)$ is depicted in Figure \ref{2fig:cone1} (c).

\begin{figure}
\centering
\leavevmode
\begin{xy}
0;/r1.2mm/:,
(0,-14)*{\text{(a) $A_2$}},
(5, 5)*{\text{\small $G_{t_{0}}$}},
(-5, 5)*{\text{\small $G_{t_{1}}$}},
(-5, -5)*{\text{\small $G_{t_{2}}$}},
(4, -8)*{\text{\small $G_{t_{3}}$}},
(7.5, -3)*{\text{\small $G_{t_{4}}$}},
(0,0)="A",
\ar "A"+(0,0); "A"+(10,0)
\ar "A"+(0,0); "A"+(0,10)
\ar@{-} "A"+(0,0); "A"+(-10,0)
\ar@{-} "A"+(0,0); "A"+(0,-10)
\ar@{-} "A"+(0,0); "A"+(10,-10)
\end{xy}
\hskip40pt
\begin{xy}
0;/r1.2mm/:,
(0,-14)*{\text{(b) $B_2$}},
(0,0)="A"
\ar "A"+(0,0); "A"+(10,0)
\ar "A"+(0,0); "A"+(0,10)
\ar@{-} "A"+(0,0); "A"+(-10,0)
\ar@{-} "A"+(0,0); "A"+(0,-10)
\ar@{-} "A"+(0,0); "A"+(5,-10)
\ar@{-} "A"+(0,0); "A"+(10,-10)
\end{xy}
\hskip40pt
\begin{xy}
0;/r1.2mm/:,
(0,-14)*{\text{(c) $G_2$}},
(0,0)="A"
\ar "A"+(0,0); "A"+(10,0)
\ar "A"+(0,0); "A"+(0,10)
\ar@{-} "A"+(0,0); "A"+(-10,0)
\ar@{-} "A"+(0,0); "A"+(0,-10)
\ar@{-} "A"+(0,0); "A"+(3.33,-10)
\ar@{-} "A"+(0,0); "A"+(5,-10)
\ar@{-} "A"+(0,0); "A"+(6.66,-10)
\ar@{-} "A"+(0,0); "A"+(10,-10)
\end{xy}
\vskip10pt
\leavevmode
\begin{xy}
0;/r1.2mm/:,
(0,-14)*{\text{(d) $A_1^{(1)}$}},
(8.7,-10)*{\cdot},
(9.2,-10)*{\cdot},
(10,-8.7)*{\cdot},
(10,-9.2)*{\cdot},
(0,0)="A"
\ar "A"+(0,0); "A"+(10,0)
\ar "A"+(0,0); "A"+(0,10)
\ar@{-} "A"+(0,0); "A"+(-10,0)
\ar@{-} "A"+(0,0); "A"+(0,-10)
\ar@{-} "A"+(0,0); "A"+(5,-10)
\ar@{-} "A"+(0,0); "A"+(6.66,-10)
\ar@{-} "A"+(0,0); "A"+(7.5,-10)
\ar@{-} "A"+(0,0); "A"+(8,-10)
\ar@{-} "A"+(0,0); "A"+(10,-5)
\ar@{-} "A"+(0,0); "A"+(10,-6.66)
\ar@{-} "A"+(0,0); "A"+(10,-7.5)
\ar@{-} "A"+(0,0); "A"+(10,-8)

\ar@{--} "A"+(0,0); "A"+(10,-10)
\end{xy}
\hskip40pt
\begin{xy}
0;/r1.2mm/:,
(0,-14)*{\text{(e) $A_2^{(2)}$}},
(4.1,-10)*{\cdot},
(4.6,-10)*{\cdot},
(5.5,-10)*{\cdot},
(6,-10)*{\cdot},
(0,0)="A"
\ar "A"+(0,0); "A"+(10,0)
\ar "A"+(0,0); "A"+(0,10)
\ar@{-} "A"+(0,0); "A"+(-10,0)
\ar@{-} "A"+(0,0); "A"+(0,-10)
\ar@{-} "A"+(0,0); "A"+(2.5,-10)
\ar@{-} "A"+(0,0); "A"+(3.3,-10)
\ar@{-} "A"+(0,0); "A"+(3.75,-10)
\ar@{-} "A"+(0,0); "A"+(10,-10)
\ar@{-} "A"+(0,0); "A"+(7.5,-10)
\ar@{-} "A"+(0,0); "A"+(6.6,-10)
\ar@{--} "A"+(0,0); "A"+(5,-10)
\end{xy}
\hskip39pt
\begin{xy}
0;/r1.2mm/:,
(0,-14)*{\text{(f) non-affine}},
(0,0)="A"
\ar@{-} (5,-10); (6.3,-8.7)
\ar@{-} (3,-10); (5.5,-7.5)
\ar@{-} (2.4,-8.6); (4.5,-6.5)
\ar@{-} (1.8,-7.2); (3.8,-5.2)
\ar@{-} (1.6,-5.4); (3,-4)
\ar "A"+(0,0); "A"+(10,0)
\ar "A"+(0,0); "A"+(0,10)
\ar@{-} "A"+(0,0); "A"+(-10,0)
\ar@{-} "A"+(0,0); "A"+(0,-10)
\ar@{-} "A"+(0,0); "A"+(2,-10)
\ar@{-} "A"+(0,0); "A"+(2.5,-10)
\ar@{-} "A"+(0,0); "A"+(2.66,-10)
\ar@{-} "A"+(0,0); "A"+(2.72,-10)
\ar@{-} "A"+(0,0); "A"+(2.76,-10)
\ar@{-} "A"+(0,0); "A"+(10,-10)
\ar@{-} "A"+(0,0); "A"+(8,-10)
\ar@{-} "A"+(0,0); "A"+(7.5,-10)
\ar@{-} "A"+(0,0); "A"+(7.33,-10)
\ar@{-} "A"+(0,0); "A"+(7.27,-10)
\ar@{-} "A"+(0,0); "A"+(7.23,-10)
\end{xy}
\caption{$G$-fans of rank 2.}
\label{2fig:cone1}
\end{figure}

(II). Infinite type. 
The $G$-fan $\Delta(\bfG)$ is not complete.

(d). Type $A_1^{(1)}$: Let
\begin{align}
B_{t_0}=
\begin{pmatrix}
0 & -2 \\
2 & 0\\
\end{pmatrix}.
\end{align}
We start from the initial $G$-matrix $G_{t_0}=I$.
Along the mutation sequence  $\mu_1$, $\mu_2$, $\cdots$, 
the following $g$-vectors appear in this order:
\begin{align}
\begin{pmatrix}
-1 \\
0 \\
\end{pmatrix},
\begin{pmatrix}
 0 \\
 -1\\
\end{pmatrix},
\begin{pmatrix}
1\\
-2 \\
\end{pmatrix},
\begin{pmatrix}
2\\
-3 \\
\end{pmatrix},
\begin{pmatrix}
3\\
-4 \\
\end{pmatrix},
\cdots.
\end{align}
On the other hand,
along the mutation sequence  $\mu_2$, $\mu_1$, $\cdots$, 
the following $g$-vectors appear in this order:
\begin{align}
\begin{pmatrix}
2 \\
-1 \\
\end{pmatrix},
\begin{pmatrix}
 3 \\
 -2\\
\end{pmatrix},
\begin{pmatrix}
4\\
-3 \\
\end{pmatrix},
\begin{pmatrix}
5\\
-4 \\
\end{pmatrix},
\cdots.
\end{align}
The $G$-fan $\Delta(\bfG)$ is depicted in Figure \ref{2fig:cone1} (d).
It covers the region excluding the half line
$\mathbb{R}_{+}(1,-1)$.

(e). Type $A_{2}^{(2)}$: Let
\begin{align}
B_{t_0}=
\begin{pmatrix}
0 & -1 \\
4 & 0\\
\end{pmatrix}.
\end{align}
We start from the initial $G$-matrix $G_{t_0}=I$.
Along the mutation sequence  $\mu_1$, $\mu_2$, $\cdots$, 
the following $g$-vectors appear in this order:
\begin{align}
\begin{pmatrix}
-1 \\
0 \\
\end{pmatrix},
\begin{pmatrix}
 0 \\
 -1\\
\end{pmatrix},
\begin{pmatrix}
1\\
-4 \\
\end{pmatrix},
\begin{pmatrix}
1\\
-3 \\
\end{pmatrix},
\begin{pmatrix}
3\\
-8 \\
\end{pmatrix},
\begin{pmatrix}
2\\
-5 \\
\end{pmatrix},
\begin{pmatrix}
5\\
-12 \\
\end{pmatrix},
\begin{pmatrix}
3\\
-7 \\
\end{pmatrix},
\cdots.
\end{align}
On the other hand,
along the mutation sequence  $\mu_2$, $\mu_1$, $\cdots$, 
the following $g$-vectors appear in this order:
\begin{align}
\begin{pmatrix}
1 \\
-1 \\
\end{pmatrix},
\begin{pmatrix}
 3 \\
 -4\\
\end{pmatrix},
\begin{pmatrix}
2\\
-3 \\
\end{pmatrix},
\begin{pmatrix}
5\\
-8 \\
\end{pmatrix},
\begin{pmatrix}
3\\
-5 \\
\end{pmatrix},
\begin{pmatrix}
7\\
-12 \\
\end{pmatrix},
\cdots.
\end{align}
The $G$-fan $\Delta(\bfG)$ is depicted in Figure \ref{2fig:cone1} (e).
It covers the region excluding the half line
$\mathbb{R}_{+}(1,-2)$.

(f). Non-affine type: Let
\begin{align}
B_{t_0}=
\begin{pmatrix}
0 & -c \\
b & 0\\
\end{pmatrix},
\quad
(b,c>0,\  bc\geq 5).
\end{align}
We start from the initial $G$-matrix $G_{t_0}=I$.
Along the mutation sequence  $\mu_1$, $\mu_2$, $\cdots$, 
the following $g$-vectors appear in this order:
\begin{align}
\bfg_1=
\begin{pmatrix}
-1 \\
0 \\
\end{pmatrix},
\
\bfg_2=
\begin{pmatrix}
 0 \\
 -1\\
\end{pmatrix},
\
\bfg_3,
\
\bfg_4,
\cdots,
\end{align}
where $\bfg_i$'s obey the  recursion,
\begin{align}
\bfg_{i+2}=
\begin{cases}
-\bfg_i + b \bfg_{i+1} & \text{$i$ odd},\\
-\bfg_i + c \bfg_{i+1} & \text{$i$ even}.
\end{cases}
\end{align}
On the other hand,
along the mutation sequence  $\mu_2$, $\mu_1$, $\cdots$
from the initial $G$-matrix,
the following $g$-vectors appear in this order:
\begin{align}
\bfg'_1=
\begin{pmatrix}
c \\
-1 \\
\end{pmatrix},
\
\bfg'_2=
\begin{pmatrix}
 bc-1 \\
 -b\\
\end{pmatrix},
\
\bfg'_3,
\
\bfg'_4,
\cdots,
\end{align}
where $\bfg'_i$'s obey the recursion,
\begin{align}
\bfg'_{i+2}=
\begin{cases}
-\bfg'_i + c \bfg'_{i+1} & \text{$i$ odd},\\
-\bfg'_i + b \bfg'_{i+1} & \text{$i$ even}.
\end{cases}
\end{align}
It is known  \cite[Eqs.~(9.11), (9.12)]{Reading12}
 that,
 in the limit $i\rightarrow \infty$,
the rays  $\sigma(\bfg_i)$ 
and $\sigma(\bfg'_i)$
monotonically converge to
$\sigma(\bfv)$ and $\sigma(\bfv')$, respectively,
where
 \begin{align}
 \label{2eq:ds1}
 \bfv
 =
 \begin{pmatrix}
bc-\sqrt{bc(bc-4)} \\
- 2b\\
\end{pmatrix},
\quad
\bfv'
 =
 \begin{pmatrix}
bc+\sqrt{bc(bc-4)} \\
 -2b\\
\end{pmatrix}.
 \end{align}
The $G$-fan $\Delta(\bfG)$ is depicted in Figure \ref{2fig:cone1} (f).
It covers the region excluding $\sigma( \bfv, \bfv')\setminus \{0 \}$ 
 (the hatched region
in the figure).

\subsection{Piecewise linear isomorphisms between $G$-fans}
\label{2subsec:Gfan1}

In this subsection
we study a canonical bijection between the sets $\Delta(\bfG^{t_0})$
and $\Delta(\bfG^{t_1})$ for any  $t_0, t_1\in\bbT_n$.
The idea  comes from \cite{Gross14},
but
this is independent of their results.
Also, the result of this and the next subsections has considerable overlap
with the results in \cite{Reading12,Reading17},
in which $G$-fans were studied as a part of
 mutation fans.

Let  $\bfB$ be any $B$ pattern.
Let us start with the following consequence of the sign-coherence.

\begin{prop}
\label{2prop:int1}
Suppose that the sign-coherence conjecture holds.
Then, for any $t_0,t\in \bbT_n$, the cone $\sigma(G_t^{t_0})$ intersects 
the hyperplanes $\bfe_i^{\perp}$ $(i=1,\dots,n)$ only in the boundary
of $\sigma(G_t^{t_0})$.
\end{prop}
\begin{proof}
Suppose that a cone $\sigma(G_t^{t_0})$
intersects with $\bfe_i^{\perp}$ for some $i$ in its interior 
$\sigma^{\circ}(G_t^{t_0})$.
This means that the $i$th row of the $G$-matrix $G_t^{t_0}$
has both positive and negative components.
This contradicts the row sign-coherence of $G_t^{t_0}$
in Proposition \ref{2prop:dual2}.
\end{proof}

Let $t_0,t_1\in \bbT_n$ be  vertices that are $k$-adjacent.
Recall the formula
\eqref{2eq:gmut6},
which is valid under the assumption of the sign-coherence of $C$-matrices.
It is  written in the vector form as follows.
\begin{align}
\label{2eq:gmut8}
\bfg_{i;t}^{t_1}=(J_k + [{\varepsilon}_{k\bullet}(G^{t_0}_t )B_{t_0}]_+^{\bullet k})\bfg^{t_0}_{i;t},
\quad
(i=1,\dots,n).
\end{align}
Let us introduce the half spaces of $\bbR^n$ as
\begin{align}
\label{2eq:Rk1}
\bbR^n_{k,+}=\{ \bfv\in {\bbR}^n \mid v_k\geq 0\},
 \quad
\bbR^n_{k,-}=\{ \bfv\in {\bbR}^n \mid v_k\leq 0\}.
\end{align}
The matrix $(J_k + [{\varepsilon}_{k\bullet}(G^{t_0}_t )B_{t_0}]_+^{\bullet k})$
in \eqref{2eq:gmut8}
is regarded as a piecewise linear map
\begin{align}
\label{2eq:vphi1}
\begin{matrix}
\varphi_{t_0}^{t_1}:& \bbR^n&\rightarrow &\bbR^n\\
&\bfv &\mapsto&
\begin{cases}
(J_k + [B_{t_0}]_+^{\bullet k})\bfv 
& \bfv\in \bbR^n_{k,+},
\\
(J_k + [-B_{t_0}]_+^{\bullet k})\bfv 
&\bfv\in \bbR^n_{k,-},
\end{cases}
\end{matrix}
\end{align}
where we note that $\varphi_{t_0}^{t_1}(\bfv)=\bfv$ if $v_k=0$.
We have, by \eqref{2eq:gmut8} and \eqref{2eq:vphi1},
for any $t\in \bbT_n$ and $i=1,\dots,k$,
\begin{align}
\label{2eq:phigg1}
\varphi_{t_0}^{t_1}(\bfg^{t_0}_{i;t})=
\bfg_{i;t}^{t_1}.
\end{align}
The map $\varphi_{t_1}^{t_0}$ is also defined in the same way
by replacing $B_{t_0}$ with $B_{t_1}$.

\begin{prop} The following relation holds:
\label{2prop:varphiinv1}
\begin{align}
\label{2eq:phi1}
\varphi_{t_1}^{t_0}\circ \varphi_{t_0}^{t_1}&=
\varphi_{t_0}^{t_1}\circ \varphi_{t_1}^{t_0}
=\mathrm{id}.
\end{align}
\end{prop}
\begin{proof}
Note that, for $\bfv'=\varphi_{t_0}^{t_1}(\bfv)$,
we have
$v'_k=-v_k$ due to $b_{kk}^{t_0}=0$.
We also recall that $(B_{t_1})^{\bullet k}=
-(B_{t_0})^{\bullet k}$.
Then, for $\bfv\in \bbR^n_{k,+}$,
\begin{align}
\begin{split}
\varphi_{t_1}^{t_0}\circ \varphi_{t_0}^{t_1}(\bfv)
&=
(J_k + [-B_{t_1}]_+^{\bullet k})
(J_k + [B_{t_0}]_+^{\bullet k})\bfv\\
&=(J_k + [B_{t_0}]_+^{\bullet k})^2\bfv=
\bfv
\end{split}
\end{align}
thanks to Lemma \ref{2lem:square1}.
The other case is similar.

\end{proof}

\begin{prop}[{cf.~\cite[Prop.~8.13]{Reading12}}]
\label{2prop:Gfan1}
Suppose that the sign-coherence conjecture holds.
Then, we have
\begin{align}
\label{2eq:phi2}
\varphi_{t_0}^{t_1}(\sigma(G_{t}^{t_0}))&={\sigma}(G_{t}^{t_1}),
\quad
(t\in \bbT_n).
\end{align}
Therefore,
the correspondence 
 $\varphi_{t_0}^{t_1}: \bfg^{t_0}_{i;t} \mapsto \bfg^{t_1}_{i;t}$
 gives a bijection
between ${\Delta}(\bfG^{t_0})$ and ${\Delta}(\bfG^{t_1})$
preserving
 the intersection and the inclusion of cones.
\end{prop}

\begin{proof}
\par
We only need to prove
\eqref{2eq:phi2}.
By Proposition \ref{2prop:int1}, each cone $\sigma(G_{t}^{t_0})$
belongs to either $\bbR^n_{k,+}$ or $\bbR^n_{k,-}$,
so that it is \emph{linearly\/} transformed under $\varphi_{t_0}^{t_1}$.
Therefore, by \eqref{2eq:phigg1},
its image is $\sigma(G_{t}^{t_1})$.
\end{proof}

\begin{rem} 
\label{2rem:Gfan1}
For the later use,
we record the following fact in the above proof.

\begin{itemize}
\item
To prove \eqref{2eq:phi2}, we only use the formula
\eqref{2eq:gmut8} for $t_0$, $t_1$ and $t$ therein,
 and the row sign-coherence
of $G_t^{t_0}$  therein.
\end{itemize}
\end{rem}

\begin{ex}
\label{2ex:pl1}
Let us clarify 
Proposition 
\ref{2prop:Gfan1} explicitly for type $A_2$
based on the convention in Section \ref{2subsec:rank2G1}.
The fans ${\Delta}(\bfG^{t_{-1}})$, ${\Delta}(\bfG^{t_{0}})$, ${\Delta}(\bfG^{t_{1}})$
are depicted  in Figure \ref{2fig:Gmut2}.
Also, the piecewise linear maps $\varphi_{t_0}^{t_1}$ and $\varphi_{t_0}^{t_{-1}}$
are given by combining the following linear maps:
\begin{gather}
J_1 + [B_{t_0}]_+^{\bullet 1}=
\begin{pmatrix}
-1 & 0\\
1 & 1
\end{pmatrix},
\quad
J_1 + [-B_{t_0}]_+^{\bullet 1}=
\begin{pmatrix}
-1 & 0\\
0 & 1
\end{pmatrix},
\\
J_2 + [B_{t_0}]_+^{\bullet 2}=
\begin{pmatrix}
1 & 0\\
0 & -1
\end{pmatrix},
\quad
J_2 + [-B_{t_0}]_+^{\bullet 2}=
\begin{pmatrix}
1 & 1\\
0 & -1
\end{pmatrix}.
\end{gather}
Then, the correspondence in Proposition
\ref{2prop:Gfan1} can be easily confirmed
in Figure \ref{2fig:Gmut2}.
\end{ex}

\begin{figure}
\centering
\leavevmode
\begin{xy}
(20,-12)*{\text{\small$\varphi_{t_0}^{t_{-1}}$}},
(20,-17)*{\text{$\longleftarrow$}},
(0,-17)*{\text{${\Delta}(\bfG^{t_{-1}})$}},
(12, 5)*{\text{\small $G_{t_{4}}^{t_{-1}}=G_{t_{-1}}^{t_{-1}}$}},
(5, -5)*{\text{\small $G_{t_{0}}^{t_{-1}}$}},
(-5, -5)*{\text{\small $G_{t_{1}}^{t_{-1}}$}},
(-3.5, 8.5)*{\text{\small $G_{t_{3}}^{t_{-1}}$}},
(-8.5, 3.5)*{\text{\small $G_{t_{2}}^{t_{-1}}$}},
(0,0)="A",
\ar "A"+(0,0); "A"+(10,0)
\ar "A"+(0,0); "A"+(0,10)
\ar@{-} "A"+(0,0); "A"+(-10,0)
\ar@{-} "A"+(0,0); "A"+(0,-10)
\ar@{-} "A"+(0,0); "A"+(-10,10)
\end{xy}
\hskip14pt
\begin{xy}
(20,-12)*{\text{\small$\varphi_{t_0}^{t_{1}}$}},
(20,-17)*{\text{$\longrightarrow$}},
(0,-17)*{\text{${\Delta}(\bfG^{t_0})$}},
(5, 5)*{\text{\small $G_{t_{0}}^{t_0}$}},
(-5, 5)*{\text{\small $G_{t_{1}}^{t_0}$}},
(-5, -5)*{\text{\small $G_{t_{2}}^{t_0}$}},
(3.5, -8.5)*{\text{\small $G_{t_{3}}^{t_0}$}},
(8, -3.5)*{\text{\small $G_{t_{4}}^{t_0}$}},
(0,0)="A",
\ar "A"+(0,0); "A"+(10,0)
\ar "A"+(0,0); "A"+(0,10)
\ar@{-} "A"+(0,0); "A"+(-10,0)
\ar@{-} "A"+(0,0); "A"+(0,-10)
\ar@{-} "A"+(0,0); "A"+(10,-10)
\end{xy}
\hskip12pt
\begin{xy}
(0,-17)*{\text{${\Delta}(\bfG^{t_1})$}},
(5, 5)*{\text{\small $G_{t_{1}}^{t_1}$}},
(5, -5)*{\text{\small $G_{t_{2}}^{t_1}$}},
(-5, -5)*{\text{\small $G_{t_{3}}^{t_1}$}},
(-3.5, 8.5)*{\text{\small $G_{t_{0}}^{t_1}$}},
(-8, 3.5)*{\text{\small $G_{t_{4}}^{t_1}$}},
(0,0)="A",
\ar "A"+(0,0); "A"+(10,0)
\ar "A"+(0,0); "A"+(0,10)
\ar@{-} "A"+(0,0); "A"+(-10,0)
\ar@{-} "A"+(0,0); "A"+(0,-10)
\ar@{-} "A"+(0,0); "A"+(-10,10)
\end{xy}
\caption{Bijections between $G$-fans for type $A_2$.}
\label{2fig:Gmut2}
\end{figure}

By applying the above isomorphism repeatedly along the tree $\bbT_n$,
we obtain the following important fact.

\begin{prop}
\label{2prop:Gfanphi1}
Suppose that the sign-coherence conjecture holds.
Then,
for any vertices $t_0,t_1\in\bbT_n$,
which are not necessarily adjacent,
 the correspondence 
 $\bfg^{t_0}_{i;t} \mapsto \bfg^{t_1}_{i;t}$
 gives a bijection
between ${\Delta}(\bfG^{t_0})$ and ${\Delta}(\bfG^{t_1})$
preserving
 the intersection and the inclusion
 of cones.

 \end{prop}

For the later use, we also describe the  relation between
$\varphi_{t_0}^{t_1}$ here and 
a certain piecewise linear isomorphism  in \cite{Gross14}.

Again, let $t_0, t_1\in \bbT_n$ be vertices that are $k$-adjacent.
First, for  $B_{t_0}$ and $k$, we introduce a  basis $\bfa_{1},\dots,\bfa_{n}$ of $\bbR^n$ as
\begin{align}
\label{2eq:ff1}
\bfa_{i}=
\begin{cases}
\displaystyle
-\bfe_i + 
\sum_{j=1}^n [-b_{jk;t_0}]_+ \bfe_j
& i = k,\\
\bfe_i
& i\neq k.
\end{cases}
\end{align}

In fact,
by specializing the formula in \eqref{2eq:gmut7} 
with $t=t_0$,
and applying the fact $\varepsilon_{k;t_0}=1$ therein,
we see that
\begin{align}
\label{2eq:gmut9}
\bfa_{i}=\bfg_{i;t_1}^{t_0}.
\end{align}
The change of a basis of $\bbR^n$
from $\bfe_1$, \dots, $\bfe_n$
to $\bfa_1$, \dots, $\bfa_n$
induces the following linear isomorphism
\begin{align}
\label{2eq:eta1}
\begin{matrix}
\eta_{t_0}^{t_1}:& \bbR^n&\rightarrow &\bbR^n\\
&\bfv&\mapsto&  (J_k+[-B_{t_0}]_+^{\bullet k})\bfv
\end{matrix}
 \end{align}
 such that $\eta_{t_0}^{t_1}(\bfa_i)=\bfe_i$.

Next, for the same $B_{t_0}$ and $k$,
we define a piecewise linear map $T_{k;t_0}$ on $\bbR^n$
following  \cite{Gross14}
as
\begin{align}
\label{2eq:Tk1}
\begin{matrix}
T_{k;t_0} :& \bbR^n &\rightarrow &\bbR^n\\
&\bfv &\mapsto
&
\begin{cases}
(I + (B_{t_0})^{\bullet k})\bfv 
& \bfv\in \bbR^n_{k,+},
\\
\bfv 
& \bfv\in \bbR^n_{k,-}.
\end{cases}
\end{matrix}
\end{align}

The following fact appeared in \cite{Muller15, Reading17}.

\begin{prop}[{\cite[Lemma~5.2.1]{Muller15}, \cite[Cor.~4.4]{Reading17}}]
\label{2prop:Gfan2}
 The above maps are related as follows:
\begin{align}
\label{2eq:phi6}
\varphi_{t_0}^{t_1}=
\eta_{t_0}^{t_1}\circ T_{k;t_0}.
\end{align}
\end{prop}

\begin{proof}
By the definitions of these maps,
\begin{align}
\eta_{t_0}^{t_1}\circ T_{k;t_0}
(\bfv)
&=
\begin{cases}
 (J_k+[-B_{t_0}]_+^{\bullet k})(I + (B_{t_0})^{\bullet k})\bfv 
& \bfv\in \bbR^n_{k,+},
\\
 (J_k+[-B_{t_0}]_+^{\bullet k})\bfv 
& \bfv\in \bbR^n_{k,-}.
\end{cases}
\end{align}
Thanks to \eqref{2eq:pos1}, we have
\begin{align*}
 (J_k+[-B_{t_0}]_+^{\bullet k})(I + (B_{t_0})^{\bullet k})
=J_k+ (B_{t_0})^{\bullet k}+[-B_{t_0}]_+^{\bullet k}
=J_k+[B_{t_0}]_+^{\bullet k}.
\end{align*}
\end{proof}

\subsection{Proof of Theorem \ref{2thm:Gfan1}}
\label{2subsec:proofGfan1}

Let us prove Theorem \ref{2thm:Gfan1}
under the assumption of the sign-coherence of $C$-matrices.

We recall that
the condition for $\Delta(\bfG^{t_0})$ to form a fan 
is stated as follows:

\begin{condition}
\label{cond:fan1}
For any  $t,t'\in \bbT_n$,
the cones  $\sigma(G_{t}^{t_0})$
and
$\sigma(G_{t'}^{t_0})$ intersect in  their common face.
\end{condition}

Thanks to  Proposition \ref{2prop:Gfan1},
Condition \ref{cond:fan1} is rephrased as follows,
where we used the bijection between $\Delta(\bfG^{t_0})$ and $\Delta(\bfG^{t})$
therein.

\begin{condition}
\label{2cond:fan1}
For any  $t,t'\in \bbT_n$,
the cones  $\sigma(G_{t'}^{t})$
and
$\sigma(I)$ intersect in  their common face,
where $\sigma(I)=\sigma(\bfe_1,\dots,\bfe_n)$.
\end{condition}

By changing the notation, we may prove Condition \ref{2cond:fan1} for $\sigma(G_{t}^{t_0})$
instead of $\sigma(G_{t'}^{t})$ therein.
Then, the condition can be further rephrased as follows:

\par\noindent
{\bf Claim.}
For any $t_0, t\in \bbT_n$,
if a $g$-vector $\bfg_{i;t}^{t_0}$ is contained in $\sigma(I)$, then $\bfg_{i;t}^{t_0}=\bfe_{\ell}$ for some $\ell$.

\par
Let us prove the claim.
For $J\subset \{1,\dots,n\}$, 
let $J^c=\{1,\dots,n\}\setminus J$ be its compliment,
and let $\sigma_{J}(G_{t}^{t_0})$ be the face of
$\sigma(G_{t}^{t_0})$ generated by $\bfg_{j;t}^{t_0}$'s ($j\in J^c$).
For example, we have  $\sigma_{\{1,\dots,n\}}(G_{t}^{t_0})=\{0\}$,
$\sigma_{\{i\}^c}(G_{t}^{t_0})=\sigma(\bfg_{i;t}^{t_0})$.

Thanks to Proposition  \ref{2prop:duality3}, we have
\begin{align}
\sigma_J(G_{t}^{t_0}) =
\sigma(G_{t}^{t_0}) 
\cap
\Biggl(\,
\bigcap_{j\in J} ({\bfc}_{j;t}^{t_0})^{\perp}
\Biggr).
\end{align}
On the other hand,
since ${\bfc}_{j;t}^{t_0}$ is either positive or negative,
we have
\begin{align}
\sigma(I)\cap
({\bfc}_{j;t}^{t_0})^{\perp}
=
\sigma(I)\cap
\bigcap_{k\in K[j]} {(\bfe_k)}^{\perp}
=\sigma_{K[j]}(I),
\end{align}
where $K[j]$ is the set of $k$ such that
 the $k$th component of ${\bfc}_{j;t}^{t_0}$ is nonzero.
Therefore, we have
\begin{align}
\label{2eq:sigma1}
\sigma_J(G_{t}^{t_0}) \cap \sigma(I)=
 \sigma(G_{t}^{t_0})\cap
\Biggl(\,
\bigcap_{j\in J}\sigma_{K[j]}(I)
\Biggr)
=
 \sigma(G_{t}^{t_0})\cap \sigma_{K[J]}(I),
\end{align}
where $K[J]$ is the set of $k$ such that
 the $k$th component of ${\bfc}_{j;t}^{t_0}$ is nonzero
 for some $j\in J$.
Now we set  $J=\{i\}^c$.
Then, we have 
 $\sigma_{\{i\}^c}(G_{t}^{t_0})=\sigma(\bfg_{i;t}^{t_0})$ and
 $K[J]=\{1,\dots,n\}$ or $\{\ell\}^c$ for some $\ell$.
 In the former case,
we see from \eqref{2eq:sigma1} that
$\sigma(\bfg_{i;t}^{t_0})$
  intersects $\sigma(I)$ trivially.
In the latter case,
 we have
 \begin{align}
 \sigma(\bfg_{i;t}^{t_0})\cap \sigma(I)
 =
  \sigma(G_{t}^{t_0})\cap\sigma(\bfe_{\ell}).
 \end{align}
 Therefore,
if    $\sigma(\bfg_{i;t}^{t_0})\cap
\sigma(I)\neq \{ 0\}$, we have   $\sigma(\bfg_{i;t}^{t_0})=\sigma(\bfe_{\ell})$.
  Then, by the unimodularity \eqref{2eq:unimod1},
we have   $\bfg_{i;t}^{t_0}=\bfe_{\ell}$ as desired.
   This
   completes the proof of Theorem \ref{2thm:Gfan1}.

\subsection{Properties of $\hat{c}$-vectors}

In this subsection we observe some properties
of $\hat{c}$-vectors that  are relevant to 
 the scattering diagram method.

The following properties do not depend on
 the sign-coherence conjecture.
\begin{prop}
The following facts hold:
\par
(a).
Any $\hat{c}$-vector $\hat{\bfc}_{i;t}$ is orthogonal to the 
 ${c}$-vector ${\bfc}_{i;t}$.
 \par
 (b).
Any $\hat{c}$-vector $\hat{\bfc}_{i;t}$ 
is in the subspace in $\bbR^n$ spanned by
$\sigma_i(G_t)$.
\end{prop}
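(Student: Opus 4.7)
The plan is to exploit the two equivalent expressions $\hat{\bfc}_{i;t} = B_{t_0}\bfc_{i;t} = G_t\bfb_{i;t}$ from \eqref{eq:chat1}, using the first form for part (a) and the second for part (b). Consistent with the remark preceding the proposition, neither argument will invoke the sign-coherence conjecture.

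For part (a), I would expand
\[
(\hat{\bfc}_{i;t},\bfc_{i;t})_D = (B_{t_0}\bfc_{i;t})^T D \bfc_{i;t} = \bfc_{i;t}^T (B_{t_0}^T D) \bfc_{i;t},
\]
and then invoke the skew-symmetrizer identity $B_{t_0}^T D = -D B_{t_0}$, which is immediate from \eqref{eq:dbd1} and the diagonality of $D$. The same diagonality makes the bilinear form $(\cdot,\cdot)_D$ symmetric, so substituting produces the equation $(\hat{\bfc}_{i;t},\bfc_{i;t})_D = -(\hat{\bfc}_{i;t},\bfc_{i;t})_D$, which forces the quantity to vanish.

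For part (b), I would write
\[
\hat{\bfc}_{i;t} = G_t \bfb_{i;t} = \sum_{j=1}^n b_{ji;t}\,\bfg_{j;t},
\]
and observe that the diagonal entry $b_{ii;t}$ vanishes because $B_t$ is skew-symmetrizable (see \eqref{eq:ss1}). Hence the sum reduces to $\sum_{j\neq i} b_{ji;t}\,\bfg_{j;t}$, so $\hat{\bfc}_{i;t}$ lies in the linear span of $\{\bfg_{j;t} : j\neq i\}$, which is by definition the subspace spanned by the face $\sigma_i(G_t)$.

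I do not foresee any real obstacle: both parts are short algebraic manipulations built on the defining identities of skew-symmetrizability and the two expressions of $\hat\bfc_{i;t}$. The only point worth flagging is that part (a) requires both inputs from \eqref{eq:dbd1} simultaneously, namely that $D$ intertwines $B_{t_0}$ and $-B_{t_0}^T$ and that $D$ is diagonal (so that $(\cdot,\cdot)_D$ is symmetric); neither alone would close the argument.
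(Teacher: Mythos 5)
Your proposal is correct and follows essentially the same route as the paper: part (a) is the observation that $({\bfc}_{i;t})^{T} DB_{t_0} {\bfc}_{i;t}=0$ because $DB_{t_0}$ is skew-symmetric (your "equals minus itself" argument is just the standard proof of that fact, unpacked), and part (b) is verbatim the paper's expansion $\hat{\bfc}_{i;t}=\sum_{j} b_{ji;t}\bfg_{j;t}$ together with $b_{ii;t}=0$.
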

\begin{proof}
(a).
Since $DB_{t_0}$ is skew-symmetric, we have
\begin{align}
\label{2eq:cc2}
({\bfc}_{i;t}, \hat{\bfc}_{i;t})_{D}
=
({\bfc}_{i;t})^{T} DB_{t_0} {\bfc}_{i;t}
=0.
\end{align}
\par
(b).
By the second expression of $\hat{c}$-vectors
in \eqref{2eq:chat1},
we have
\begin{align}
\label{2eq:chat2}
\hat{\bfc}_{i;t}
=
\sum_{j=1}^n
b_{ji;t}\bfg_{j;t},
\end{align}
where we recall that $b_{ii;t}=0$.
\end{proof}

\begin{defn}
\label{2defn:normalc1}
Suppose that the sign-coherence conjecture holds.
Let $\bfc_{i;t}$ be a $c$-vector, and
let $\varepsilon_{i;t}$ be its tropical sign
in Definition \ref{2defn:tropicalsign1}.
We call the vectors $\bfc_{i;t}^+=\varepsilon_{i;t}\bfc_{i;t}$
and $\hat{\bfc}_{i;t}^+=\varepsilon_{i;t}\hat{\bfc}_{i;t}$
a \emph{ $c^+$-vector\/} and a \emph{ $\hat{c}^+$-vector},
respectively.
\end{defn}

\begin{rem}
Any $c^+$-vector $\bfc_{i;t}^+$ is   \emph{positive\/} due to the sign-coherence
assumption,
while a $\hat{c}^+$-vector $\hat{\bfc}_{i;t}^+$ is not  so at all in general.
\end{rem}

The following fact is related to the \emph{incoming/outgoing property\/}
of walls appearing later in Definition \ref{2defn:wall1}.
\begin{prop}
\label{2prop:outgoing1}
Suppose that the sign-coherence conjecture holds.
Then, if
\begin{align}
\label{2eq:csigma1}
\hat{\bfc}_{i;t}^+\in
\sigma_i(G_t),
\end{align}
then ${\bfc}_{i;t}^+=\bfe_{\ell}$ for some $\ell$.
\end{prop}
\begin{proof}
In view of \eqref{2eq:chat2},
the condition \eqref{2eq:csigma1}
is equivalent to the following condition:
\begin{align}
\label{2eq:ebpos1}
\text{
The vector
$\varepsilon_{i;t}\bfb_{i;t}$ is either positive  or zero.
}
\end{align}
Suppose that \eqref{2eq:ebpos1} happens, where we replace $i$ with $k$ therein
for the sake of convenience.
Let $t'$ be the vertex that is
$k$-adjacent to $t$.
Then, the mutation \eqref{2eq:gmut7} is simplified by \eqref{2eq:ebpos1}
as
\begin{align}
\label{2eq:gmut10}
\bfg_{i;t'}&=
 \begin{cases}
 \displaystyle
 -\bfg_{k;t}
 &
 i= k,
 \\
 \bfg_{i;t}  &
 i \neq k.
 \end{cases}
 \end{align}
 Now we recall the row sign-coherence of $G$-matrices
 in Proposition \ref{2prop:dual2}.
 Then, 
 \eqref{2eq:gmut10} happens only when
 the following condition is satisfied:
 \begin{itemize}
 \item
 For each $\ell=1,\dots,n$,
 if the $\ell$th element of $\bfg_{k;t}$ is nonzero,
 then the $\ell$th element of other $\bfg_{i;t}$ ($i\neq k$) vanishes.
 \end{itemize}
Thanks to the unimodularity \eqref{2eq:unimod1} of $G_t$,
one can sharpen the condition as follows:
 \begin{itemize}
 \item
 There is some $\ell$ such that 
 $\bfg_{k;t}=\pm\bfe_{\ell}$,
 and the $\ell$th element of other $\bfg_{i;t}$ ($i\neq k$) vanishes.
 \end{itemize}
 Then, by the duality \eqref{2eq:dual1},
 $\bfc_{k;t}=\pm\bfe_{\ell}$.
 Thus,  $\bfc_{k;t}^+=\bfe_{\ell}$.
\end{proof}

\newpage
\section{Scattering diagrams and $C$- and $G$-matrices}

In this section we present a proof of the  sign-coherence conjecture
based on the results on scattering diagrams in \cite{Gross14}.
At the same time we also establish the  correspondence
between a $G$-fan and the support of a scattering diagram.

\subsection{Supports of scattering diagrams}

To write down a precise definition of scattering diagrams in \cite{Gross14} requires considerable pages,
and it will be postponed in Section \ref{2sec:scattering1}.
Here we summarize the minimal information and results  \cite{Gross14}
that are needed  to prove the sign-coherence conjecture.

Informally speaking,
for a certain vector space  $M_{\bbR} \simeq\bbR^n$,
 a \emph{scattering diagram\/} $\frakD=\{\bfw_{\lambda}=(\frakd_{\lambda}, f_{{\lambda}})\}_{\lambda\in \Lambda}$ in $M_{\bbR}$
is a finite or countably infinite
collection of  \emph{walls},
where $\Lambda$ is an index set,
and
each wall $\bfw_{\lambda}=(\frakd_{\lambda}, f_{{\lambda}})$ is a pair such that
$\frakd_{\lambda}$ is a (not necessarily strongly convex) cone of codimension one in $M_{\bbR}$
called the \emph{support} of $\bfw_{\lambda}$,
and $f_{{\lambda}}$ is a certain formal power series called the \emph{wall function} of $\bfw_{\lambda}$.
The union of the supports of walls
\begin{align}
\mathrm{Supp}(\frakD)=\bigcup_{\lambda\in \Lambda}
\frakd_{\lambda}
\subset M_{\bbR}
\end{align}
is called the \emph{support of a scattering diagram $\frakD$.}

For any \emph{nonsingular\/} skew-symmetrizable  matrix $B$,
one can construct a  scattering diagram $\frakD_{\fraks}$
called a \emph{cluster scattering diagram},
where $\fraks$ is a \emph{seed data\/} in \cite{Gross14} corresponding to $B$.
There is a canonical  isomorphism $\phi_{\fraks}:M_{\bbR} \buildrel {\sim} \over {\rightarrow}\bbR^n$
such that the image of  $\mathrm{Supp}(\frakD_{\fraks})$ in $ \bbR^n$ only depends on $B$.
We write this image as $\frakS(B)$,
and we call it  the \emph{support diagram\/} \index{support diagram} for $B$, for simplicity.
A connected component of $\bbR^n\setminus\frakS(B)$
is called a \emph{chamber\/} of $\frakS(B)$. \index{chamber}

From the viewpoint of cluster patterns,
the supports and the wall functions of   walls
 contain the information of the tropical  and the nontropical part
 of a cluster pattern,
 respectively.
 Since  at this moment we are interested only in the tropical part, i.e., $C$- and $G$-matrices,
 we may safely forget wall functions, and concentrate on the support diagram
 $\frakS(B)$.

Let $D$ be a skew-symmetrizer of $B$ in the form
\eqref{2eq:Dmat1},
and let
$(\ , \ )_{D}$  be the inner product in $\bbR^n$ defined in \eqref{2eq:Dinner1}.
The following facts are direct consequences of
the definition/theorem of the scattering diagram $\frakD_{\fraks}$ in \cite[Theorems 1.12 \&1.28]{Gross14}, which will appear later as
Theorem \ref{2thm:scat1}.

\begin{prop}[{\cite[Theorems 1.12 \&1.28]{Gross14}}]
\label{2prop:SD1}
For any nonsingular skew-symmetrizable matrix $B$,
the support diagram $\frakS(B)$ has the following properties.
\par
(a). The support $\frakd_{\lambda}$ of each wall of $\frakS(B)$ has a positive normal vector
$\bfn_{\lambda}$ with respect to the inner product $(\ , \ )_{D}$.
\par
(b). 
Each hyperplane $\bfe_i^{\perp}$  ($i=1,\dots, n$)
with respect to the inner product $(\ , \ )_{D}$
is a subset of $\frakS(B)$.
\par
(c). 
The interior $\sigma^{\circ}(I)$ of the 
cone $\sigma(I)=\sigma(\bfe_1,\dots, \bfe_n)$ is a \emph{chamber\/} of 
$\frakS(B)$.
In other words,  $\sigma^{\circ}(I)$ does not intersect $\frakS(B)$,
 while
 its boundary belongs to $\frakS(B)$.
(This is a consequence of (a) and (b).)
\end{prop}

Also, the following property of the support diagram $\frakS(B)$
 is  obtained from {\cite[Theorem 1.24]{Gross14}},
 which will appear later as
Theorem \ref{2thm:scat2},
together with 
Proposition \ref{2prop:Gfan2}.
See the explanation after Theorem \ref{2thm:scat2}
for details.

\begin{prop}[{\cite[Theorem 1.24]{Gross14},
\cite[Lemma~5.2.1]{Muller15},\cite[Cor.~4.4]{Reading17}}]
\label{2prop:SD2}
Let $\bfB$ be any nonsingular $B$-pattern.
Let $t_0, t_1\in \bbT_n$ be vertices that are $k$-adjacent.
Let
$\varphi_{t_0}^{t_1}$
 be
 the map
defined in \eqref{2eq:vphi1}.
Then, the support diagrams $\frakS(B_{t_0})$
and $\frakS(B_{t_1})$ are related by
\begin{align}
\varphi_{t_0}^{t_1}(\frakS(B_{t_0}))&=\frakS(B_{t_1}).
\end{align}
\end{prop}

This is clearly a  parallel result for $G$-fans in Proposition
\ref{2prop:Gfan1}, which was proved
under the assumption of the sign-coherence of $C$-matrices.

\subsection{Proof of sign-coherence conjecture}
Let us present a proof of Conjecture \ref{2conj:Csign1}
relying on Propositions \ref{2prop:SD1} and \ref{2prop:SD2}.
Our strategy is to establish both
\begin{itemize}
\item
 the identification of
the interior  $\sigma^{\circ}(G^{t_0}_t)$ of each $G$-cone
as a chamber in  $\frakS(B_{t_0})$,
\item
the column sign-coherence of $C$-matrices $C_t^{t_0}$,
\end{itemize}
 \emph{simultaneously\/} by the induction 
 on the distance $d(t_0, t)$ in  $\bbT_n$,
 where both $t_0$ and $t$ vary.

Now we prove the following theorem.
See \cite[Theorem~4.10]{Reading17} for a closely related result for mutation fans.
\begin{thm}
\label{2thm:SCprove1}
Let $\bfB=\{B_t\}_{t\in \bbT_n}$ be any nonsingular $B$-pattern.
Then, for any  $t_0,t\in\bbT_n$, the following facts hold:
\par
(a). The set
$\sigma^{\circ}(G_t^{t_0})$ is a chamber of $\frakS(B_{t_0})$.
\par
(b). Each $c$-vector $\bfc_{i;t}^{t_0}$
is a normal vector of  the support of some wall of $\frakS(B_{t_0})$.
\par
(c). The column sign-coherence holds for the $C$-matrix $C_t^{t_0}$.
\end{thm}

\begin{proof}
As in Section \ref{2subsec:dual1},
we introduce the transpose $\bfB^T=\{(B_t)^T\}_{t\in \bbT_n}$ of $\bfB$.
Let $\tilde{C}_{t}^{t_0}$, $\tilde{G}_{t}^{t_0}$
be  $C$- and $G$-matrices of $\bfB^T$.

Consider the following claims $(a)_d$, $(b)_d$, $(c)_d$ for $d=0,1,2$, \dots.
\par
$(a)_d$.
For any $t_0, t\in \bbT_n$ such that $d(t_0,t)=d$, the following facts hold:
\begin{align}
 \label{2eq:unimod11}
 |C_{t}^{t_0}|&=|G_{t}^{t_0}|\in \{1,-1\},
\\
 \label{2eq:dual11}
 D^{-1}(G_{t}^{t_0})^{T}DC_{t}^{t_0}&=I,
 \\
\label{2eq:dual21}
C_{t}^{t_0} &= (\tilde{G}_{t_0}^{t})^T,\\
\label{2eq:dual31}
G_{t}^{t_0} &= (\tilde{C}_{t_0}^{t})^T.
\end{align}
\par
$(b)_d$.
For any $t_0, t\in \bbT_n$ such that $d(t_0,t)=d$, the following facts hold:
\begin{itemize}
\item[(i).]
The set
$\sigma^{\circ}(G_t^{t_0})$ is a chamber of $\frakS(B_{t_0})$.
\item[(ii).]
Each $c$-vector $\bfc_{i;t}^{t_0}$
is a normal vector of the support of a wall of $\frakS(B_{t_0})$.
\item[(iii).]
The column sign-coherence holds for $C_{t}^{t_0}$.

\item[(iv).]
The row sign-coherence holds for $G_{t}^{t_0}$.
\end{itemize}
\par
$(c)_d$.
For any $t_0,t_1, t,t'\in \bbT_n$ such that
\begin{itemize}
\item
 $d(t_0,t)=d$,
 \item
$t_0$ and $t_1$  are $k$-adjacent,
\item
$t$ and $t'$  are $k$-adjacent,
 \end{itemize}
 the following facts hold:
\begin{align}
\label{2eq:cmut51}
C_{t'}^{t_0}&=C_t^{t_0}(J_k + [{\varepsilon}_{\bullet k}(C^{t_0}_t )B_t]_+^{k \bullet}),
\\
\label{2eq:gmut51}
G_{t'}^{t_0}&=G_t^{t_0}(J_k + [-{\varepsilon}_{\bullet k}(C^{t_0}_t )B_t]_+^{\bullet k}),
\\
\label{2eq:cmut61}
C^{t_1}_{t}&=(J_k + [-{\varepsilon}_{k\bullet}(G^{t_0}_t )B_{t_0}]_+^{k \bullet})C^{t_0}_t,
\\
\label{2eq:gmut61}
G^{t_1}_{t}&=(J_k + [{\varepsilon}_{k\bullet}(G^{t_0}_t )B_{t_0}]_+^{\bullet k})G^{t_0}_t.
\end{align}

We prove the claims in the following order,
\begin{align}
\label{2eq:ind2}
(a)_0 \Longrightarrow 
(b)_0 \Longrightarrow 
(c)_0 \Longrightarrow 
(a)_1 \Longrightarrow 
(b)_1 \Longrightarrow 
(c)_1 \Longrightarrow 
(a)_2 \Longrightarrow 
\cdots,
\end{align}
assuming all  preceding claims.
Moreover, we run the same induction procedure for 
  $C$- and $G$-matrices
 $\tilde{C}_{t}^{t_0}$, $\tilde{G}_{t}^{t_0}$ of $\bfB^T$ in the background.

We first prove $(a)_0$, $(b)_0$, and $(c)_0$.
The claims $(a)_0$ and $(c)_0$ were proved in Propositions \ref{2prop:dual1} and \ref{2prop:dual2}.
The claim $(b)_0$ holds because $C_{t_0}^{t_0}=G_{t_0}^{t_0}=I$,
 and Proposition \ref{2prop:SD1} (c),
 where we have already received an important input from
  scattering diagrams.

Now, assuming the claims in 
\eqref{2eq:ind2} up to $(c)_{d}$, we prove $(a)_{d+1}$.
This was already proved in Propositions \ref{2prop:dual1} and \ref{2prop:dual2}
by taking account of Remarks \ref{2rem:sign1} and \ref{2rem:sign2}.

Next, assuming the claims in 
\eqref{2eq:ind2} up to $(a)_{d+1}$,
we prove $(b)_{d+1}$.
(This is the highlights of the proof of the sign-coherence conjecture.)
Let $t_0,t_1,t\in  \bbT_n$
be vertices such that $d(t_0,t)=d$, $d(t_1,t)=d+1$, and
$t_0$ and $t_1$ are $k$-adjacent.
By $(b)_{d}$, 
$\sigma^{\circ}(G_t^{t_0})$ is a chamber in $\frakS(B_{t_0})$.
By  $(a)_d$, $(c)_d$, Proposition \ref{2prop:Gfan1},
and Remark \ref{2rem:Gfan1},
we have $\sigma(G_t^{t_1})=\varphi_{t_0}^{t_1}(\sigma(G_t^{t_0}))$.
Meanwhile,
by Proposition
\ref{2prop:SD2},
the piecewise linear isomorphism $\varphi_{t_0}^{t_1}$,
which is homeomorphism,
maps a chamber in  $\frakS(B_{t_0})$
to a chamber in  $\frakS(B_{t_1})$.
Therefore, $\sigma^{\circ}(G_t^{t_1})$ is  a chamber in  $\frakS(B_{t_1})$.
This proves (i).
By (i),
each face
$\sigma_i(G_t^{t_1})$ ($i=1,\dots,n$),
which is a cone of codimension one by $(a)_{d+1}$,
is a subset of the union  of the supports of some walls of $\frakS(B_{t_1})$.
Meanwhile,
by $(a)_{d+1}$,
each $c$-vector $\bfc_{i;t}^{t_1}$ is a normal vector of
the face $\sigma_i(G_t^{t_1})$
with respect to the inner product $(\ ,\ )_D$.
Therefore, 
each $c$-vector $\bfc_{i;t}^{t_1}$
is a normal vector of the support of some wall of $\frakS(B_{t_1})$.
See Proposition \ref{2prop:duality3}.
This proves (ii).
Then, by Proposition \ref{2prop:SD1} (a),
the $c$-vector $\bfc_{i;t}^{t_1}$ is either positive or negative.
This proves (iii), i.e., the column sign-coherence of the $C$-matrix $C_{t}^{t_1}$.
The same result is proved in the background for  the $C$-matrix  $\tilde{C}_{t_1}^{t}$
of the $B$-pattern $\bfB^T$.
Thus, by $(a)_{d+1}$,  the $G$-matrix $G_{t}^{t_1}$ is row sign-coherent.
This proves (iv).

Finally, 
 assuming the claims in 
\eqref{2eq:ind2} up to $(b)_{d+1}$,
we prove $(c)_{d+1}$.
This was already proved in Propositions \ref{2prop:dual1} and \ref{2prop:dual2}
by taking account of Remarks \ref{2rem:sign1} and \ref{2rem:sign2}.
\end{proof}

We may roughly describe the results in  Theorem \ref{2thm:SCprove1} (a)
that \emph{the $G$-fan $\Delta(\bfG^{t_0})$ is embedded in the corresponding cluster scattering diagram $\frakD_{\fraks}$}.

 We immediately extend Theorem \ref{2thm:SCprove1} (c) 
for any (possibly singular) $B$-pattern by
 Proposition \ref{2prop:principal1},

\begin{thm}
Let $\bfB=\{B_t\}_{t\in \bbT_n}$ be any (possibly singular) $B$-pattern.
Then, for any  $t_0,t\in\bbT_n$, 
the column sign-coherence for the $C$-matrix $C_t^{t_0}$ holds.
\end{thm}
\begin{proof}
Let $\overline{C}_t$ and $\overline{G}_t$
be $C$- and $G$-matrices
of  the principal extension $\overline{\bfB}$ of $\bfB$ with the initial vertex $t_0$
in Section \ref{2subsec:principal1}.
The matrix  $\overline{C}_t$ is column sign-coherent by Theorem \ref{2thm:SCprove1} (c).
Then, the claim follows from \eqref{2eq:Cbar1}.
\end{proof}

This completes the proof of Conjecture \ref{2conj:Csign1}.

From now on, we use the column sign-coherence of $C$-matrices
not as an assumption but as a theorem.

\begin{rem}
In Part III we show that Propositions \ref{2prop:SD1} and \ref{2prop:SD2}
actually
holds for any (possibly singular) $B$-pattern.
Thus, Theorem \ref{2thm:SCprove1} also holds for any (possibly singular) $B$-pattern
by the same proof.
This gives a direct proof of the sign-coherence without using 
 Proposition \ref{2prop:principal1}.
\end{rem}

\newpage
\section{More about $F$-polynomials}
\label{2sec:Fpoly1}

Let us turn to study $F$-polynomials,
namely, the nontropical part of a cluster pattern.

\subsection{Fock-Goncharov decomposition}

Let $\bfSigma$ be any cluster pattern.
We come back to the mutations of $x$-
and $y$-variables, especially in the $\varepsilon$-expression
\eqref{2eq:xmut2} and \eqref{2eq:ymut2}.
Let $t, t'\in \bbT_n$ be vertices  that are $k$-adjacent.
One can regard
mutations \eqref{2eq:xmut2} and \eqref{2eq:ymut2}
as  isomorphisms between rational function fields as follows,
where $\bfx_{t}$, $\bfx_{t'}$, $\bfy_{t}$, $\bfy_{t'}$ are $n$-tuple of formal variables:
\begin{align}
&\mu_{k;t}:\bbQ(\bfx_{t'}) \rightarrow \bbQ(\bfx_t),
\
\bbQ(\bfy_{t'}) \rightarrow \bbQ(\bfy_t),
\\
\label{2eq:xmut3}
&\mu_{k;t}(x_{i;t'})=
\begin{cases}
\displaystyle
x_{k;t}^{-1}\Biggl(\, \prod_{j=1}^n x_{j;t}^{[-\varepsilon b_{jk;t}]_+}
\Biggr)
( 1+\hat{y}_{k;t}^{\varepsilon})
& i=k,
\\
x_{i;t}
&i\neq k,
\end{cases}
\\
\label{2eq:ymut3}
&\mu_{k;t}(
y_{i;t'}
)
=
\begin{cases}
\displaystyle
y_{k;t}^{-1}
& i=k,
\\
y_{i;t} y_{k;t}^{[\varepsilon b_{ki;t}]_+} (1+ y_{k;t}^{\varepsilon})^{-b_{ki;t}}
&i\neq k.
\end{cases}
\end{align}
Note that the map is in the opposite direction, namely,  from $t'$ to $t$.
Here and below, we use common symbols for maps for
$x$- and $y$-variables as above in view of the parallelism between them.

To make use of the  flexibility of the choice of 
sign $\varepsilon$, we first choose a given
initial vertex $t_0$.
Since we have already established the sign-coherence
of $C$-matrices $C_t^{t_0}=C_t$,
the tropical sign $\varepsilon_{i;t}^{t_0}=\varepsilon_{i;t}$ in
Definition \ref{2defn:tropicalsign1} is defined for any $t$ and $i$.
Now we set $\varepsilon=\varepsilon_{k;t}$ 
in \eqref{2eq:xmut3} and \eqref{2eq:ymut3}.

Then, modifying the idea of \cite[\S2.1]{Fock03} with tropical signs, 
we consider the following decompositions of the maps $\mu_{k;t}$:
\begin{align}
\label{2eq:decom1}
\mu_{k;t}=\rho_{k;t}\circ \tau_{k;t}.
\end{align}
Here, $\tau_{k;t}^{t_0}=\tau_{k;t}$ are the following isomorphisms,
\begin{align}
&\tau_{k;t}:\bbQ(\bfx_{t'}) \rightarrow \bbQ(\bfx_t),
\
\bbQ(\bfy_{t'}) \rightarrow \bbQ(\bfy_t),
\\
\label{2eq:tau1}
&\tau_{k;t}(x_{i;t'})=
\begin{cases}
\displaystyle
x_{k;t}^{-1}\prod_{j=1}^n x_{j;t}^{[-\varepsilon_{k;t} b_{jk;t}]_+}
& i=k,
\\
x_{i;t}
&i\neq k,
\end{cases}
\\
\label{2eq:tau2}
&\tau_{k;t}(
y_{i;t'}
)
=
\begin{cases}
\displaystyle
y_{k;t}^{-1}
& i=k,
\\
y_{i;t} y_{k;t}^{[\varepsilon_{k;t} b_{ki;t}]_+} 
&i\neq k,
\end{cases}
\end{align}
while $\rho_{k;t}^{t_0}=\rho_{k;t}$ are the following automorphisms,
\begin{align}
&\rho_{k;t}:\bbQ(\bfx_{t}) \rightarrow \bbQ(\bfx_t),
\
\bbQ(\bfy_{t}) \rightarrow \bbQ(\bfy_t),
\\
\label{2eq:rho1}
&\rho_{k;t}(x_{i;t})=
x_{i;t}(1+\hat{y}_{k;t}^{\varepsilon_{k;t}})^{-\delta_{ik}},
\\
\label{2eq:rho2}
&\rho_{k;t}(
y_{i;t}
)
=y_{i;t}
(1+y_{k;t}^{\varepsilon_{k;t}})^{-b_{ki;t}}.
\end{align}
Note that the maps $\tau_{k;t}$ and $\rho_{k;t}$ depend on the choice
of the initial vertex $t_0$
through  tropical signs.
Also, they are defined only after establishing the sign-coherence of $C$-matrices.
We  call the decomposition \eqref{2eq:decom1} the \emph{Fock-Goncharov decomposition\/}
\index{Fock-Goncharov decomposition}
of a mutation ${\mu}_{k;t}$ (\emph{with tropical sign\/}) with respect to the initial vertex $t_0$.
We also call $\tau_{k;t}$ and $\rho_{k;t}$ the \emph{tropical part\/} \index{Fock-Goncharov decomposition!tropical part}
and the \emph{nontropical part\/} of ${\mu}_{k;t}$, respectively. \index{Fock-Goncharov decomposition!nontropical part}
Indeed,
 it is clear that \eqref{2eq:tau1} and \eqref{2eq:tau2} are the exponential form
 of the mutations of $g$-vectors \eqref{2eq:gmut7} and $c$-vectors \eqref{2eq:cmut7}, respectively.

\begin{rem}
The decomposition \eqref{2eq:decom1} was introduced in \cite{Fock03}
for $\varepsilon=1$.
The signed version here  were used, for example, in the application to
the  Stokes automorphisms
  \cite{Iwaki14a} and
dilogarithm identities 
 \cite{Gekhtman16}.
\end{rem}

\begin{prop}
\label{2prop:inv1}
Let $t,t'\in \bbT_n$ be vertices that are $k$-adjacent.
Then, the following relations hold.
\begin{align}
\label{2eq:mm1}
\mu_{k;t'}\circ \mu_{k;t}&=\mathrm{id},
\\
\label{2eq:tt1}
\tau_{k;t'}\circ \tau_{k;t}&=\mathrm{id}.
\end{align}
\end{prop}
\begin{proof}
The equality \eqref{2eq:mm1} is nothing but the involution of the mutations of $x$- and $y$-variables.
The equality \eqref{2eq:tt1} for both $x$- and $y$-variables can be verified by the fact,
\begin{align}
\varepsilon_{k;t'}=-\varepsilon_{k;t},
\quad
b_{ki;t'}=-b_{ki;t},
\quad
b_{jk;t'}=-b_{jk;t}.
\end{align}
\end{proof}
For any $t\in \bbT_n$,
consider a sequence of vertices $t_0$, $t_1$, \dots, $t_{r+1}=t\in \bbT_n$
such that
they  are sequentially adjacent with edges labeled by $k_0$, \dots, $k_{r}$.
Then, we define isomorphisms
\begin{align}
\label{2eq:mucom1}
\mu_{t}^{t_0}&:=\mu_{k_0;t_0}\circ\mu_{k_1;t_1}\circ\dots \circ \mu_{k_{r};t_{r}}
:\bbQ(\bfx_{t}) \rightarrow \bbQ(\bfx),
\
\bbQ(\bfy_{t}) \rightarrow \bbQ(\bfy),
\\
\label{2eq:mucom2}
\tau_{t}^{t_0}&:=\tau_{k_0;t_0}\circ\tau_{k_1;t_1}\circ\dots \circ \tau_{k_{r};t_{r}}
:\bbQ(\bfx_{t}) \rightarrow \bbQ(\bfx),
\
\bbQ(\bfy_{t}) \rightarrow \bbQ(\bfy),
\end{align}
where we set $\bfx_{t_0}=\bfx$, $\bfy_{t_0}=\bfy$.
Thanks to Proposition \ref{2prop:inv1}, $\mu_{t}^{t_0}$ and
$\tau_{t}^{t_0}$ depend only on $t_0$ and $t$.
Namely, we do not have to care about the redundancy $k_{s+1}=k_s$ 
in the sequence $k_0,\dots,k_r$.

Recall the notions of tropical $x$- and $y$-variables  in 
\eqref{2eq:xtrop1} and \eqref{2eq:ytrop1}.
The following proposition  tells
that the tropical parts $\tau_{t}^{t_0}$ are nothing but the
mutations of  tropical $x$- and $y$-variables.

\begin{prop}
\label{2prop:trop1}
The following formulas hold:
\begin{align}
\label{2eq:mux1}
\mu_{t}^{t_0}(x_{i;t})&=x^{\bfg_{i;t}}F_{i;t}(\hat\bfy),\\
\label{2eq:muy1}
\mu_{t}^{t_0}(y_{i;t})&=y^{\bfc_{i;t}}\prod_{j=1}^n F_{j;t}(\bfy)^{b_{ji;t}},\\
\label{2eq:tg1}
\tau_{t}^{t_0}(x_{i;t})&=x^{\bfg_{i;t}},\\
\label{2eq:tc1}
\tau_{t}^{t_0}(y_{i;t})&=y^{\bfc_{i;t}}.
\end{align}
\end{prop}
\begin{proof}
The first two equalities are  the separation formulas
in Theorem \ref{2thm:sep1}.
Let us prove the equality \eqref{2eq:tg1} for $x$-variables by the induction 
on $t$
along $\bbT_n$ starting from $t_0$.
For $t=t_0$, $x^{\bfg_{i;t_0}}=x^{\bfe_i}=x_i$.
Therefore,  \eqref{2eq:tg1}  holds.
Suppose that it is true for  $t$ with $d(t_0,t)=d$.
Let $t'$ be the vertex that is
$k$-adjacent to $t$
 such that $d(t_0,t')=d+1$.
Then, we have
\begin{align}
\begin{split}
&\quad\ \tau_{t'}^{t_0}(x_{i;t'})
=(\tau_{t}^{t_0}\circ \tau_{k;t})(x_{i;t'})
\\
&=
\begin{cases}
\displaystyle
\tau_{t}^{t_0}\Biggl(
x_{k;t}^{-1} \prod_{j=1}^n x_{j;t}^{[-\varepsilon_{k;t} b_{jk;t}]_+}
\Biggr)
=x^{ -\bfg_{k;t}+\sum_{j=1}^n
[-\varepsilon_{k;t} b_{jk;t}]_+
\bfg_{j;t}}
&
i=k,
\\
\tau_{t}^{t_0}(
x_{i;t})
=x^{\bfg_{i;t}}
& i\neq k
\end{cases}
\\
&
=x^{\bfg_{i;t'}},
\end{split}
\end{align}
where we used \eqref{2eq:gmut7} in the last equality.
Therefore,  \eqref{2eq:tg1}  holds for $t'$.
Similarly,
the equality \eqref{2eq:tc1} for $y$-variables follows from \eqref{2eq:cmut7}.
\end{proof}

Under these maps, $\hat{y}$-variables transform in the same way as $y$-variables as expected.
\begin{prop}
\label{2prop:tauyhat1}
The following formulas hold:
\begin{align}
\label{2eq:tau3}
\tau_{k;t}(
\hat{y}_{i;t'}
)
&=
\begin{cases}
\displaystyle
\hat{y}_{k;t}^{-1}
& i=k,
\\
\hat{y}_{i;t} \hat{y}_{k;t}^{[\varepsilon_{k;t} b_{ki;t}]_+} 
&i\neq k,
\end{cases}
\\
\label{2eq:rho3}
\rho_{k;t}(
\hat{y}_{i;t}
)
&=\hat{y}_{i;t}
(1+\hat{y}_{k;t}^{\varepsilon_{k;t}})^{-b_{ki;t}},
\\
\label{2eq:thc1}
\tau_{t}^{t_0}(\hat{y}_{i;t})&=\hat{y}^{\bfc_{i;t}}
=x^{\hat\bfc_{i;t}},
\end{align}
where $\hat\bfc_{i;t}=B_{t_0}\bfc_{i;t}$ is a $\hat{c}$-vector in
\eqref{2eq:chat1}.
\end{prop}
\begin{proof}
The equality \eqref{2eq:tau3}
follows from
 \eqref{2eq:tau1} and \eqref{2eq:bmut2}.
 The equality \eqref{2eq:rho3} 
 follows from \eqref{2eq:rho1}.
 The equality  \eqref{2eq:thc1}
 follows from  \eqref{2eq:tg1} and \eqref{2eq:yhattrop2}, 

\end{proof}

\subsection{Nontropical parts and $F$-polynomials}
Observing Proposition \ref{2prop:trop1},
it is clear that the nontropical parts $\rho_{k;t}$ are responsible to
generate and
mutate $F$-polynomials.
Let us make this statement more manifest.

In addition to the inner product $(\ , \ )_{D}$
in \eqref{2eq:Dinner1},
we introduce a  skew-symmetric form
\begin{align}
\label{2eq:Dinner12}
\{\bfu, \bfv\}_{DB}:=\bfu^T DB \bfv=(\bfu, B\bfv)_D,
\end{align}
where $B=B_{t_0}$.
Then, we introduce the following automorphisms  $\frakq_{k;t}$ for the initial $x$- and $y$-variables,
\begin{align}
\frakq_{k;t}:\bbQ(\bfx) &\rightarrow \bbQ(\bfx),
\
\bbQ(\bfy) \rightarrow \bbQ(\bfy),
\\
\label{2eq:fp1}
\frakq_{k;t}(x^{\bfm})&=
x^{\bfm}(1+
\hat{y}^{\bfc_{k;t}^+}
)^{-(\d_k \bfc_{k;t},\bfm)_D}\\
&=
x^{\bfm}(1+
x^{\hat\bfc_{k;t}^+}
)^{-(\d_k \bfc_{k;t},\bfm)_D}
,
\\
\label{2eq:fp2}
\frakq_{k;t}(
y^{\bfn}
)
&=y^{\bfn}
(1+y^{{\bfc}_{k;t}^+}
)^{-\{\d_k \bfc_{k;t},\bfn\}_{DB}},
\end{align}
where $\bfm,\bfn\in \bbZ^n$, and
$\bfc_{k;t}^+=\varepsilon_{k;t}\bfc_{k;t}$ and
$\hat{\bfc}_{k;t}^+=\varepsilon_{k;t}\hat{\bfc}_{k;t}$
are a  $c^+$-vector
 and 
a  $\hat{c}^+$-vector,
 respectively,
 in Definition \ref{2defn:normalc1}.

The  seemingly asymmetric definitions in
 \eqref{2eq:fp1} and \eqref{2eq:fp2}
are justified by the following properties.

\begin{prop}
\label{2prop:taurho1}
The following facts hold:
\par
(a). We have the formulas
\begin{align}
\label{2eq:fp3}
&\frakq_{k;t}(x^{\bfg_{i;t}})=
x^{\bfg_{i;t}}(1+
\hat{y}^{\bfc_{k;t}^+}
)^{-\delta_{ik}},
\\
\label{2eq:fp4}
&\frakq_{k;t}(
y^{\bfc_{i;t}}
)
=y^{\bfc_{i;t}}
(1+y^{{\bfc}_{k;t}^+}
)^{-b_{ki;t}}.
\end{align}
\par
(b). The following relation holds:
\begin{align}
\label{2eq:taurho1}
\tau^{t_0}_t \circ \rho_{k;t}
= \frakq_{k;t}\circ \tau^{t_0}_t.
\end{align}
\par
(c). If $t'$ and $t$ are $k$-adjacent, we have
\begin{align}
\frakq_{k;t'}
\circ
\frakq_{k;t}
=\mathrm{id}.
\end{align}
\end{prop}
\begin{proof}
(a). 
The claim follows from the following equalities:
\begin{align}
\label{2eq:gc1}
(\d_k \bfc_{k;t},\bfg_{i;t})_D&=\delta_{ki},\\
\label{2eq:gc2}
\{\d_k \bfc_{k;t}, \bfc_{i;t}\}_{DB}&=b_{ki;t}.
\end{align}
The  equality \eqref{2eq:gc1} is
the duality in \eqref{2eq:dcg1}.
The equality \eqref{2eq:gc2} follows from
\eqref{2eq:cbc1} as follows:
\begin{align}
D^{-1}(C_t^TDB_{t_0}C_t)
=
D^{-1} (DB_t)=B_t.
\end{align}
\par
(b).
Thanks to \eqref{2eq:fp3} and Propositions \ref{2prop:trop1} and \ref{2prop:tauyhat1},
we have
\begin{align}
&x_{i;t} 
\ {\buildrel \rho_{k;t} \over \mapsto}\ 
x_{i;t}(1+\hat{y}_{k;t}^{\varepsilon_{k;t}})^{-\delta_{ik}}
\ {\buildrel \tau_{t}^{t_0} \over \mapsto}\ 
x^{\bfg_{i;t}}(
1+
\hat{y}^{\bfc_{k;t}^+}
)^{-\delta_{ik}},
\\
&x_{i;t} 
\ {\buildrel \tau_{t}^{t_0} \over \mapsto}\ 
x^{\bfg_{i;t}}
\ {\buildrel  \frakq_{k;t} \over \mapsto}\ 
x^{\bfg_{i;t}}(1+
\hat{y}^{\bfc_{k;t}^+}
)^{-\delta_{ik}}.
\end{align}
Therefore, the equality \eqref{2eq:taurho1} holds for $x$-variables. Similarly, we have
\begin{align}
&y_{i;t} 
\ {\buildrel \rho_{k;t} \over \mapsto}\ 
y_{i;t}(1+{y}_{k;t}^{\varepsilon_{k;t}})^{-b_{ki;t}}
\ {\buildrel \tau_{t}^{t_0} \over \mapsto}\ 
y^{\bfc_{i;t}}(
1+
{y}^{\bfc_{k;t}^+}
)^{-b_{ki;t}},
\\
&y_{i;t} 
\ {\buildrel \tau_{t}^{t_0} \over \mapsto}\ 
y^{\bfc_{i;t}}
\ {\buildrel  \frakq_{k;t}\over \mapsto}\ 
y^{\bfc_{i;t}}(1+
{y}^{\bfc_{k;t}^+}
)^{-b_{ki;t}}.
\end{align}
Therefore, the equality \eqref{2eq:taurho1} holds also for $y$-variables.
\par
(c).
This follows from $\bfc_{k;t'}=-\bfc_{k;t}$.
\end{proof}

For the same
$t_0$, $t_1$, \dots, $t_{r+1}=t\in \bbT_n$
for $\mu_{t}^{t_0}$ and $\tau_{t}^{t_0}$ in 
\eqref{2eq:mucom1} and \eqref{2eq:mucom2},
we define the automorphism
\begin{align}
\label{2eq:fq1}
\frakq_{t}^{t_0}&:=
\frakq_{k_{0};t_{0}}
\circ\frakq_{k_1;t_1}\circ
\dots
\circ
\frakq_{k_{r};t_{r}}
:
\bbQ(\bfx) \rightarrow \bbQ(\bfx),
\
\bbQ(\bfy) \rightarrow \bbQ(\bfy).
\end{align}
Again, by Proposition \ref{2prop:taurho1} (c),
it depends only on $t_0$ and $t$.

One can  separate the tropical and the nontropical
parts of  $\mu^{t_0}_t $
as follows:
\begin{prop}
\label{2prop:sep3}
The following decomposition holds:
\begin{align}
\label{2eq:taurho2}
\mu^{t_0}_t 
= \frakq_t^{t_0}\circ \tau^{t_0}_t
:
\bbQ(\bfx_t) \rightarrow \bbQ(\bfx),
\
\bbQ(\bfy_t) \rightarrow \bbQ(\bfy).
\end{align}
\end{prop}
\begin{proof}
First, we note that
\begin{align}
&\rho_{k_0;t_0}(x_{i})=
x_{i}(1+\hat{y}_{k_0})^{-\delta_{ik_0}}
=
\frakq_{k_0;t_0}(x_{i})
,
\\
&\rho_{k_0;t_0}(
y_{i}
)
=y_{i}
(1+y_{k_0})^{-b_{k_0i;t_0}}
=
\frakq_{k_0;t_0}(y_{i}).
\end{align}
For $x$-variables, for example,
by Proposition \ref{2prop:taurho1} (b),
we have the following commutative diagram:
\begin{align}
\raisebox{70pt}
{
\begin{xy}
(0,0)*+{\bbQ(\bfx_t)}="aa";
(20,0)*+{\bbQ(\bfx_{t_{r}})}="ba";
(20,-12)*+{\bbQ(\bfx_{t_{r}})}="bb";
(40,-12)*+{\bbQ(\bfx_{t_{r-1}})}="cb";
(40,-24)*+{\bbQ(\bfx_{t_{2}})}="cc";
(60,-24)*+{\bbQ(\bfx_{t_{1}})}="dc";
(60,-36)*+{\bbQ(\bfx_{t_{1}})}="dd";
(80,0)*+{\bbQ(\bfx_{t_{0}})}="ea";
(80,-12)*+{\bbQ(\bfx_{t_{0}})}="eb";
(80,-24)*+{\bbQ(\bfx_{t_{0}})}="ec";
(80,-36)*+{\bbQ(\bfx_{t_{0}})}="ed";
(80,-48)*+{\bbQ(\bfx_{t_{0}})}="ee";
(10,3)*+{\text{\small $\tau_{k_r;t_r}$}};
(50,3)*+{\text{\small $\tau^{t_0}_{t_r}$}};
(60,-9)*+{\text{\small $\tau^{t_0}_{t_{r-1}}$}};
(30,-9)*+{\text{\small $\tau_{k_{r-1};t_{r-1}}$}};
(50,-21)*+{\text{\small $\tau_{k_{1};t_{1}}$}};
(70,-21)*+{\text{\small $\tau_{k_{0};t_{0}}$}};
(70,-33)*+{\text{\small $\tau_{k_{0};t_{0}}$}};
(4,-6)*+{\text{\small $\mu_{k_r;t_r}$}};
(44,-30)*+{\text{\small $\mu_{k_{1};t_{1}}$}};
(64,-42)*+{\text{\small $\mu_{k_0;t_0}$}};
(26,-6)*+{\text{\small $\rho_{k_r;t_r}$}};
(65,-30)*+{\text{\small $\rho_{k_{1};t_{1}}$}};
(86,-6)*+{\text{\small $\frakq_{k_r;t_r}$}};
(86,-30)*+{\text{\small $\frakq_{k_{1};t_{1}}$}};
(92,-42)*+{\text{\small $\rho_{k_0;t_0}=\frakq_{k_0;t_0}$}};
(80,-17)*+{\vdots};
(40,-17)*+{\vdots};
(30,-17)*+{\ddots};
{\ar "aa";"ba"};
{\ar "ba";"bb"};
{\ar "aa";"bb"};
{\ar "ba";"ea"};
{\ar "bb";"cb"};
{\ar "cb";"eb"};
{\ar "cc";"dc"};
{\ar "cc";"dd"};
{\ar "ea";"eb"};
{\ar "dc";"dd"};
{\ar "dc";"ec"};
{\ar "ec";"ed"};
{\ar "dd";"ed"};
{\ar "ed";"ee"};
{\ar "dd";"ee"};
\end{xy}
}
\end{align}
Therefore, the claim holds.
\end{proof}

We conclude
that the automorphisms $ \frakq_t^{t_0}$ generate
the nontropical parts of
$x$- and $y$-variables 
in the following manner.
 
 \begin{thm}
 The following formulas hold:
  \begin{align}
 \label{2eq:fp5}
 \frakq_t^{t_0}(x^{\bfg_{i;t}})
 &=
 x^{\bfg_{i;t}}
 F_{i;t}(\hat\bfy)
 ,\\
  \label{2eq:fp51}
 \frakq_t^{t_0}(y^{\bfc_{i;t}})
 &=
 y^{\bfc_{i;t}}
 \prod_{j=1}^n
 F_{j;t}(\bfy)^{b_{ji;t}}
,\\
 \label{2eq:fp9}
 \frakq_t^{t_0}(\hat{y}^{\bfc_{i;t}})
 &=
 \hat{y}^{\bfc_{i;t}}
 \prod_{j=1}^n
 F_{j;t}(\hat\bfy)^{b_{ji;t}}.
 \end{align}
 \end{thm}
 \begin{proof}
 The  equalities  \eqref{2eq:fp5} and  \eqref{2eq:fp51} 
 follow from Propositions
\ref{2prop:trop1} and \ref{2prop:sep3}.
For example,
for $x$-variables,
\begin{align}
 \frakq_t^{t_0}(x^{\bfg_{i;t}})
=
\frakq_t^{t_0}(\tau_{t}^{t_{0}}(x_{i;t}))
=
\mu_t^{t_0}(x_{i;t})
=
 x^{\bfg_{i;t}}
 F_{i;t}(\hat\bfy).
\end{align}
Similarly,
 the  equality  \eqref{2eq:fp9} 
 follows from Propositions
\ref{2prop:tauyhat1} and \ref{2prop:sep3}.
 \end{proof}
 
  \subsection{Detropicalization}
 \label{2subsec:synchro1}
 
 Let $\bfSigma$ be any cluster pattern of rank $n$,
 and let $t_0$ be a given initial vertex.
 Let $S_n$ be the symmetric group of degree $n$.
 We define the (left) action of a permutation $\nu\in S_n$
 on $x$- and $y$-variables, etc., as follows.
 \begin{alignat}{3}
 \label{2eq:sigmax1}
\bfx'&=\nu \bfx_t, &\quad
x'_i&=x_{\nu^{-1}(i);t},\\
\bfy'&=\nu \bfx_t, &\quad
y'_i&=y_{\nu^{-1}(i);t},\\
B'&=\nu B_t, &\quad
b'_{ij}&=b_{\nu^{-1}(i)\nu^{-1}(j);t},\\
C'&=\nu C_t, &\quad
c'_{ij}&=c_{i\nu^{-1}(j);t},\\
 \label{2eq:sigmag1}
G'&=\nu G_t, &\quad
g'_{ij}&=g_{i\nu^{-1}(j);t}.
\end{alignat}
Note that these actions are compatible with the separation formulas
in Theorem \ref{2thm:sep1}.

For any $t,t'\in \bbT_n$ and any permutation $\nu\in S_n$,
we define the following isomorphisms.
\begin{align}
&\nu_{t}^{t'}:\bbQ(\bfx_{t}) \rightarrow \bbQ(\bfx_{t'}),
\
\bbQ(\bfy_{t}) \rightarrow \bbQ(\bfy_{t'}),
\\
\label{2eq:sigma3}
&\nu_{t}^{t'}(x_{i;t})=
x_{\nu^{-1}(i);t'},
\\
\label{2eq:sigma4}
&\nu_{t}^{t'}(
y_{i;t}
)
=y_{\nu^{-1}(i);t'}.
\end{align}
Then, 
by
Proposition \ref{2prop:trop1},
one can rephrase the \emph{periodicity condition\/}
for $G$-matrices and $x$-variables as follows:
\begin{align}
\label{2eq:period1}
G_{t}^{t_0}=\nu G_{t'}^{t_0}
&\quad\Longleftrightarrow\quad \tau_t^{t_0}=\tau_{t'}^{t_0}\circ\nu_{t}^{t'} \ \mbox{(for $x$-variables)},
\\
\label{2eq:period2}
\bfx_{t}=\nu \bfx_{t'}
&\quad\Longleftrightarrow\quad  \mu_t^{t_0}=\mu_{t'}^{t_0}\circ\nu_{t}^{t'} \ \mbox{(for $x$-variables)}.
\end{align}
Also,  the parallel statement holds for $C$-matrices and $y$-variables.
\begin{prop}
\label{2prop:detrop1}
Let $t_0, t, t'$ be any vertices in $\bbT_n$.
Suppose that 
\begin{align}
G_{t}^{t_0}=\nu G_{t'}^{t_0}
\end{align}
holds.
Then, we have
\begin{align}
\label{2eq:period3}
\bfx_{t}=\nu \bfx_{t'} 
\quad
&\Longleftrightarrow \quad \frakq_t^{t_0}=\frakq_{t'}^{t_0} \ \mbox{\rm(for $x$-variables)}.
\end{align}
Also,  the parallel statement holds for $C$-matrices and $y$-variables.
\end{prop}
\begin{proof}
The claim  follows from  \eqref{2eq:period1}, \eqref{2eq:period2},
and
Proposition \ref{2prop:sep3}.
\end{proof}

Note that the permutation $\nu_{t}^{t'}$  drops off in  the condition for $ \frakq_t^{t_0}$
in   \eqref{2eq:period3}.

The properties
\begin{align}
\label{2eq:detro1}
G_{t}^{t_0}=\nu G_{t'}^{t_0}
&\quad\Longrightarrow\quad \bfx_{t}=\nu \bfx_{t'},
\\
\label{2eq:detro2}
C_{t}^{t_0}=\nu C_{t'}^{t_0}
&\quad\Longrightarrow\quad \bfy_{t}=\nu \bfy_{t'}
\end{align}
are called  the \emph{detropicalization\/} \index{detropicalization}
of $x$- and $y$-variables, respectively.
They are  the core of the \emph{synchronicity phenomenon\/} occurring
in cluster patterns
systematically studied in \cite{Nakanishi19}.

\begin{rem}
\label{rem:detrop1}
The implications \eqref{2eq:detro1} and \eqref{2eq:detro2}
were proved for $\nu=\mathrm{id}$
by \cite[Lemma~2.4 \& Theorem~2.5]{Cao17} 
and  for general $\nu$ by \cite[Theorem~5.2]{Nakanishi19}.
However, the proofs therein depend on the Laurent positivity  in
Theorem \ref{2thm:Fpositive1}, which we temporarily dismiss.
We come back to the point later
in Section \ref{2subsec:synchro2}.
\end{rem}

 \newpage
\section{Scattering diagrams}
\label{2sec:scattering1}

In this section we present a  precise formulation of scattering diagrams
following \cite{Gross13,Gross14}.
To do this, we set the following guideline.
\begin{itemize}
\item[(a).] Since in our approach we do not need
scattering diagrams with \emph{frozen variables\/}
(corresponding to \emph{coefficients of geometrical type\/} in \cite{Fomin07}),
we skip the relevant formulation.
This makes a considerable simplification
of the formulation.
\item[(b).] We omit the description of the underlying Lie algebra $\frak{g}$ and the group
$G$ in \cite{Gross14} for scattering diagrams.
They are crucial for the construction of
scattering diagrams, but we can skip them for our purpose.
\item[(c).] We clarify and establish the connection between  cluster patterns
and scattering diagrams in detail by step-by-step examples.
\item[(d).] We mostly follow the notations and the conventions in \cite{Gross14}.
However, we take the \emph{transpose\/} of  the skew-symmetric forms therein
to match the convention of the exchange matrices in \cite{Fomin07}.
\end{itemize}

\subsection{Fixed data and seed}

\begin{defn}[Fixed data]
A \emph{fixed data\/} \index{fixed data}
 $\Gamma$ consists of the following:
\begin{itemize}
\item
A lattice $N\simeq \bbZ^n$ of rank $n$ with
a skew-symmetric bilinear form
\begin{align}
\label{2eq:bili1}
\{\cdot, \cdot \}: N \times N \rightarrow \bbQ.
\end{align}
\item
A sublattice $N^{\circ}\subset N$ of finite index (equivalently, of rank $n$) such that
\begin{align}
\label{2eq:bili2}
\{N^{\circ}, N\}\subset \bbZ.
\end{align}
\item
Positive integers $\d_1$, \dots, $\d_n$ 
such that
there is a basis $(e_1, \dots, e_n)$ of $N$,
where
$(\d_1e_1, \dots, \d_n e_n)$ is  a basis of $N^{\circ}$.
\item
$M=\mathrm{Hom}(N,\bbZ)$, $M^{\circ}=\mathrm{Hom}(N^{\circ},\bbZ)$,
 $M_{\bbR}=M\otimes_{\bbZ} \bbR$,
and we regard
\begin{align}
M\subset M^{\circ} \subset M_{\bbR}.
\end{align}
\end{itemize}
\end{defn}

For a given $N^{\circ}\subset N$ of finite index,
the above $\d_1,\dots, \d_n$ always exist; for example, take the
 elementary divisors of the embedding $N^{\circ}\hookrightarrow N$.
 However, other choices are equally good.

Let $\langle n, m \rangle: N^{\circ}\times M^{\circ} \rightarrow \bbZ$ be
the canonical paring.
We also write its linear extension 
$N\times M_{\bbR} \rightarrow \bbR$
by the same symbol $\langle n, m \rangle$.
(Though it is a little confusing, the symbol $n$ is used  to denote  both  the rank and an element in $N$.)

\begin{rem}
(a).
In \eqref{2eq:bili2} we already took the transpose of the form in \cite{Gross14}
as mentioned in the above guideline (d).
\par
(b).
In \cite{Gross14}
it is assumed that
the numbers $\d_1$, \dots, $\d_n$ are  coprime.
Here we do not require it.
See also Remark \ref{22rem:equivalence1}.
\end{rem}

\begin{defn}[Seed]
\label{2defn:seed2} \index{seed!for a fixed data}
A \emph{seed\/} $\fraks=(e_1,\dots,e_n)$ for a fixed data $\Gamma$
is a basis of $N$ such that
$(\d_1e_1, \dots, \d_n e_n)$ is  a basis of $N^{\circ}$.
\end{defn}

Let  $\fraks=(e_1,\dots,e_n)$ be a seed for a fixed data $\Gamma$,
and 
let $(e^*_1, \dots, e^*_n)$ be the dual basis of $M$.
Let $f_i=\d_i^{-1}e_i^*$ ($i=1,\dots,n$).
Then,
 $(f_1,\dots,f_n)$ is a basis of $M^{\circ}$.
 
 Accordingly,
 we define isomorphisms, all of which are denoted by the same symbol for simplicity,
 \begin{align}
 \label{2eq:phi10}
& \begin{matrix}
 \phi_{\fraks}:&N& \rightarrow &\bbZ^n,
 \\
  & e_i &\mapsto &\bfe_i,
  \end{matrix}
  \\
 \label{2eq:phi11}
&  \begin{matrix}
   \phi_{\fraks} :&  M^{\circ} & \rightarrow &\bbZ^n,
   & 
 M_{\bbR} & \rightarrow& \bbR^n,\\
  & f_i &\mapsto &\bfe_i.
  \end{matrix}
 \end{align}
 The identifications under the above isomorphisms are written as,
 $N\simeq_{\fraks} \bbZ^n$,  
 $M^{\circ}\simeq_{\fraks} \bbZ^n$,   $M_{\bbR} \simeq_{\fraks} \bbR^n$,
 respectively.

 Let us identify
 $N\simeq_{\fraks} \bbZ^n$ and $M^{\circ}\simeq_{\fraks} \bbZ^n$.
For $n\in N^{\circ}\subset N$ and $m\in M^{\circ}$,
let $\bfn\in \bbZ^n$ and $\bfm\in \bbZ^n$ be the corresponding vectors.
Then,
the canonical paring $\langle n, m \rangle$
for $N^{\circ}\times M^{\circ}$ is given by
\begin{align}
\label{2eq:can1}
\langle n, m \rangle
=(\bfn, \bfm)_D:=\bfn^T D\bfm,
\quad
D=\mathrm{diag}(\d_1^{-1}, \dots, \d_n^{-1}).
\end{align}
This symmetric bilinear form $(\cdot, \cdot)_D$ agrees with the one
in \eqref{2eq:Dinner1}.
We also regard it
 as a  pairing 
$N \times M_{\bbR}\rightarrow \bbR$, depending on the context.

\begin{ex}
\label{2ex:seed1}
A seed $\fraks$ for a fixed data $\Gamma$ is identified
with a seed $(\bfx, \bfy, B)$ (in the sense of Definition \ref{2defn:seed1}) in the following way:
Let $x^m$ ($m\in M^{\circ})$ and $y^n$ ($n \in N)$ be the monomial expressions
with formal symbols $x$ and $y$
such that $x^{m+m'}=x^m x^{m'}$, $y^{n+n'}=y^n y^{n'}$.
Let $\calF_X$ and $\calF_Y$ be the rational function fields generated by
all formal exponentials
$x^m$ and $y^n$, respectively.
Then, a seed $(\bfx_{\fraks}, \bfy_{\fraks}, B_{\fraks})=(\bfx, \bfy, B)$ in $(\calF_X, \calF_Y)$ is defined by
\begin{align}
\label{2eq:initials1}
x_i= x^{f_i},
\quad
y_i= y^{e_i},
\quad
b_{ij}=\{ \d_i e_i, e_j\}.
\end{align}
In particular,
the matrix $D$ in \eqref{2eq:can1}
is a (left) skew-symmetrizer of $B=(b_{ij})$.
Moreover, 
under the identification $N\simeq_{\fraks} \bbZ^n$, the skew-symmetric bilinear form
in \eqref{2eq:bili1} is given by
\begin{align}
\label{2eq:DB2}
\{n, n'\}=\{\bfn, \bfn'\}_{DB}:=\bfn^T DB \bfn'.
\end{align}
This skew-symmetric bilinear form agrees with the one
in \eqref{2eq:Dinner12}.
This is the reason why we take $D$ as
\eqref{2eq:Dmat1} throughout.
For a given seed $\fraks$ (the \emph{initial seed\/}) for a fixed data $\Gamma$ \index{seed!initial}
and a given initial vertex $t_0\in \bbT_n$,
we associate a cluster pattern
$\bfSigma_{\fraks,t_0}$ 
whose initial  seed $\Sigma_{t_0}$ is given
by the  seed $(\bfx, \bfy, B)$ in \eqref{2eq:initials1}.
Below we consider the
$C$-, $G$-, and $F$-patterns
of $\bfSigma_{\fraks,t_0}$ with the initial vertex $t_0$
unless otherwise mentioned.
\end{ex}

\begin{rem}
In the scattering diagram formalism, 
it is natural to consider the diagonal matrix 
with positive integer diagonals
$\Delta=\mathrm{diag}(\delta_1,\dots,\delta_n)$
and the  skew-symmetric rational matrix
$\Omega = (\omega_{ij})_{i,j=1}^n$,
$\omega_{ij}=\{e_i,e_j\}$.
Then, 
the matrices in  \eqref{2eq:can1}--\eqref{2eq:DB2}
are given by
\begin{align}
D=\Delta^{-1},
\quad
B=\Delta \Omega,
\quad
DB=\Omega.
\end{align}
\end{rem}
\subsection{Walls}
Let us introduce the homomorphism
\begin{align}
\label{2eq:p*1}
\begin{matrix}
p^* : &N &\rightarrow &M^{\circ}\\
& n & \mapsto & \{ \cdot, n\}
\end{matrix}.
\end{align}

\begin{ex}[continued]
Using the matrix $B$ in
 \eqref{2eq:initials1}, 
we have
\begin{align}
\label{2eq:pe1}
 p^*(e_j)=\sum_{i=1}^n b_{ij}f_i.
 \end{align}
 Therefore, 
 under our identification $N\simeq_{\fraks} \bbZ^n$ and $M^{\circ}\simeq_{\fraks} \bbZ^n$,
 the matrix representation of $p^*$ is just given by $B$.
Also, again by \eqref{2eq:initials1} and \eqref{2eq:pe1},
\begin{align}
\label{2eq:pe2}
x^{p^*(e_i)}=\prod_{j=1}^n x_j^{b_{ji}}=\hat{y}_i.
\end{align}
\end{ex}

From now on we assume the following condition for a fixed data $\Gamma$:

\begin{assum}[Injectivity Assumption] \index{Injectivity Assumption}
\label{2assum:inj1}
The map $p^*$ is injective.
\end{assum}

\begin{ex}[continued.]
Injectivity Assumption
 is equivalent to assume that
 the skew-symmetric form $\{\cdot,\cdot\}$
 in \eqref{2eq:bili1} is nondegenerate.
 Thus,
  $B$ in  \eqref{2eq:initials1} is nonsingular,
  and   $\hat{y}_1$, \dots, $\hat{y}_n$ 
in \eqref{2eq:pe2}
are algebraically independent.
\end{ex}

Define
\begin{gather}
\label{2eq:N+1}
N^+ = N_{\fraks}^+:=
\biggl\{ \sum_{i=1}^n a_i e_i \biggm | a_i \in \bbZ_{\geq 0},\ \sum_{i=1}^n a_i \neq 0\biggr\},
\\
\label{2eq:P1}
 P=P_{\fraks}:=
 \biggl\{ \sum_{i=1}^n a_i p^*(e_i) \biggm |  a_i \in \bbZ_{\geq 0}\biggr\}
 =p^*(N^+)\sqcup \{0\}
 \subset M^{\circ}.
 \end{gather}
 We view $P$ as a monoid.

\begin{ex}[continued.]
Under the identification $M^{\circ}\simeq_{\fraks} \bbZ^n$,
we have
\begin{align}
 P=
 \biggl\{ \sum_{i=1}^n a_i \bfb_i \biggm | a_i \in \bbZ_{\geq 0}\biggr\},
\end{align}
where $\bfb_i$ is the $i$th column of $B$.
\end{ex}

Let $\bbk$ be any field of characteristic zero, 
e.g., $\bbk=\bbQ$.
Let $\bbk[P]$ be the monoid algebra of $P$ over $\bbk$.
Let $J$  be the maximal ideal of $\bbk[P]$
generated by $P\setminus \{0\}$,
and let 
\begin{align}
\bbk[[P]]=\lim_{\longleftarrow} \bbk[P]/J^{\ell}
\end{align}
be the completion with respect to $J$.
Using the formal exponentials $x^m$ ($m\in P$),
an element of $\bbk[P]$ (resp. $\bbk[[P]]$)  is written as a  polynomial 
(resp. a formal power series) in $x$ as 
\begin{align}
f=
\sum_{m\in P} c_m x^m,
\quad
(c_m \in \bbk).
\end{align}

\begin{rem}
In \cite{Gross14} the notation $z^m$ is used.
We use  $x^m$ because of the forthcoming identification with $x$-variables.
This notation is also compatible with the one in Example \ref{2ex:seed1}.
\end{rem}

\begin{ex}[continued.]
Under the identification \eqref{2eq:pe2}, 
$\bbk[P]$ is  the polynomial algebra $\bbk[\hat{y}_1, \dots, \hat{y}_n]$.
Also, 
$\bbk[[P]]$ is   the algebra of  formal power series   $\bbk[[\hat{y}_1, \dots, \hat{y}_n]]$.
\end{ex}

\begin{defn}[Normalized automorphism]
For $n\in N^+$ and $f=1+\sum_{k=1}^{\infty} c_k x^{kp^*(n)}
\in \bbk[[P]]$,
we define an automorphism $\frakp_{n,f}$ of $\bbk[[P]]$ by
\begin{align}
\label{2eq:fpm1}
\frakp_{n,f}(x^m)=x^m f^{\langle \d(n)n, m\rangle},
\quad
(m\in P),
\end{align}
where $\d(n)$ is the smallest positive rational number 
such that $\d(n)n\in N^{\circ}$.
We call $\d(n)$ the \emph{normalization factor\/} of $n$ \index{normalization factor}
and $\frakp_{n,f}$ the \emph{normalized automorphism\/} by $n$ and $f$. \index{normalized automorphism}
\end{defn}

\begin{ex}[continued.]
\label{2ex:wfunct1}
Let us identify $N\simeq_{\fraks} \bbZ^n$ and $M^{\circ}\simeq_{\fraks} \bbZ^n$.

\par
(a). Let $n=\bfe_i$, where $p^*(n)=B\bfe_i=\bfb_i$.
Then, we have $\d(\bfe_i)=\d_i$.
Let $f=1+x^{\bfb_i}=1+\hat{y}_i$.
Then, the  automorphism $\frakp_{n,f}$ is given by
\begin{align}
\frakp_{n,f}(x^{\bfm})=x^{\bfm} (1+\hat{y}_i)^{( \d_i \bfe_i, \bfm)_D}.
\quad
(\bfm\in P\subset M^{\circ}).
\end{align}
\par
(b).
Let $n=\bfc_{i;t}^+$,
where $p^*(n)=B\bfc_{i;t}^+=\hat\bfc_{i;t}^+$.
We first claim that
\begin{align}
\label{2eq:n'1}
\d(\bfc_{i;t}^+)=\d_i.
\end{align}
We prove it by the induction on $t$ along $\bbT_n$ starting from $t_0$.
For $t=t_0$, this is the case  (a) above.
Assume that \eqref{2eq:n'1} holds for some $t$.
Let $t'$ be the vertex that is $k$-adjacent to $t$.
By \eqref{2eq:cmut5}, we have
\begin{align}
\begin{split}
C_{t'}D^{-1}&=C_t(J_k + [\varepsilon_{k;t}B_t]_+^{k \bullet})D^{-1}\\
&=C_tD^{-1}(J_k + [-\varepsilon_{k;t}(B_t)^T]_+^{k \bullet}),
\end{split}
\end{align}
where \eqref{2eq:DBD1} is used for the last equality.
Since $|J_k + [-\varepsilon_{k;t}(B_t)^T]_+^{k \bullet}|=-1$ by Lemma 
\ref{2lem:square1}, each column vector $\d_i\bfc_{i;t'}^+$ belongs to $N_0$.
Furthermore, $\d_i$ is the smallest
positive rational number satisfying this property
because $\d_1\bfc_{1;t'}^+$, \dots, $\d_n\bfc_{n;t'}^+$ are a $\bbZ$-basis
of $N^{\circ}$.
This proves \eqref{2eq:n'1}.
Now, let $f=1+x^{\hat\bfc_{i;t}^+}=1+\hat{y}^{\bfc_{i;t}^+}$.
Then, we have
\begin{align}
\frakp_{n,f}(x^{\bfm})=x^{\bfm} (1+\hat{y}^{\bfc_{i;t}^+})^{( \d_i \bfc_{i;t}^+, \bfm)_D},
\quad
(\bfm\in P\subset M^{\circ}).
\end{align}
Observe that 
\begin{align}
\label{2eq:pq1}
\frakp_{n,f}^{-\varepsilon_{i;t}}=\frakq_{i;t},
\end{align}
where  $\frakp_{n,f}^{-1}$ is the inverse of $\frakp_{n,f}$,
and $\frakq_{i;t}$ is the one  in  \eqref{2eq:fp1}
 viewed as
an automorphism of
$\bbk[[P]]$.
The factor $-\varepsilon_{i;t}$ in  \eqref{2eq:pq1}
is important for the integrity of the whole picture.
\end{ex}

For any  $n\in N^+$, we define
a hyperplane in $M_{\bbR}$
\begin{align}
n^{\perp}:=
\{ m\in M_{\bbR} \mid
\langle n, m \rangle = 0 \}
\subset M_{\bbR}.
\end{align}
We say that $n\in N^+$ is \emph{primitive\/} if \index{primitive}
there is no pair $j \in \bbZ_{> 1}$ and  $n'\in N^+$
such that $n=j n'$.
Let $N_{\mathrm{pr}}^+$ be the set of all primitive elements
in $N$.
 
Now we introduce the  fundamental ingredient of scattering diagrams.

\begin{defn}[Wall] \index{wall}
\label{2defn:wall1}
A \emph{wall\/} for a seed $\fraks$
 is a triplet $\bfw=(\frakd, f)_{n}$,
 where
 \begin{itemize}
\item
$n\in N^+_{\rm pr}$,
\item
$\frakd\subset n^{\perp}$ is a (not necessarily strongly convex) cone in $M_{\bbR}$ of codimension one,
\item
$f$ is an element of  $\bbk[[x^{p^*(n)}]]\subset \bbk[[P]]$ with
 constant term 1,
\begin{align}
\label{2eq:fd1}
f=1+\sum_{k=1}^{\infty} c_k x^{kp^*(n)}.
\end{align}
\end{itemize}
The cone $\frakd$  is called the \emph{support\/} of a wall. \index{support!of a wall}
No names were explicitly given for $f$  and $n$ in \cite{Gross14}.
Since this is inconvenient, here we call them the \emph{wall function\/} \index{wall function}
and  the \emph{normal vector\/} of a wall. \index{normal vector (of a wall)}
We say that a wall $\bfw=(\frakd, f)_{n}$ is \emph{incoming\/} if \index{wall!incoming}
\begin{align}
\label{2eq:outgoing1}
p^*(n)\in \frakd.
\end{align}
Otherwise, we say it is \emph{outgoing}. \index{wall!outgoing}
\end{defn}

\begin{rem}
Since $\{n,n\}=0$ by skew-symmetry, we have
\begin{align}
p^*(n)\in n^{\perp}.
\end{align}
\end{rem}

\begin{ex}[continued]
\label{2ex:wall1}
Let us identify $N\simeq_{\fraks} \bbZ^n$ and $M^{\circ}\simeq_{\fraks} \bbZ^n$.

\par
(a). 
Let $n=\bfe_i$, $\frakd=\bfe_i^{\perp}$, and $f=1+\hat{y}_i$.
Then, $\bfb_i =B\bfe_i\in \frakd$, and $\bfw_{i;t_0}=(\bfe_i^{\perp},1+\hat{y}_i)_{\bfe_i}$ is an \emph{incoming\/} wall.
\par
(b).
Let $n=\bfc_{i;t}^+=\varepsilon_{i;t}\bfc_{i;t}$, 
$\frakd=\sigma_i(G_t)$, and $f=1+\hat{y}^{\bfc_{i;t}^+}$.
Firstly, $\bfc_{i;t}^+$ is primitive due to the unimodularity
\eqref{2eq:unimod1} of $C$-matrices.
Secondly, $\bfc_{i;t}^+$ is a normal vector of $\sigma_i(G_t)$
thanks to the duality in Proposition \ref{2prop:duality3}.
Therefore, $\bfw_{i;t}=(\sigma_i(G_t), 1+\hat{y}^{\bfc_{i;t}^+})_{\bfc_{i;t}^+}$ is a wall.
Moreover,  due to
Proposition \ref{2prop:outgoing1},
 if $\bfc_{i;t}^+\neq \bfe_{\ell}$ for any $\ell$,   it is an \emph{outgoing\/} wall.
Note that, if $t,t'\in \bbT_n$ are $k$-adjacent,
 then two walls $\bfw_{k;t}$ and $\bfw_{k;t'}$
 are identical.
\end{ex}

\subsection{Scattering diagrams}

Let $\widehat{J}$ be the maximal ideal of $\bbk[[P]]$
generated by $P\setminus \{0\}$.
 We say that $f=1+\sum_{m\in P, m\neq 0} c_m x^m
  \in \bbk[[P]]$ is \emph{trivial modulo $\widehat{J}^{\ell}$} if its image in $\bbk[[P]]/\widehat{J}^{\ell}$
  is 1.

\begin{ex}[continued]
Under the identification $\bbk[[P]]=\bbk[[\hat{y}_1, \dots, \hat{y}_n]]$,
$f$ is trivial  modulo $\widehat{J}^{\ell}$ if and only if
$f-1$ contains no monomial in $\hat{y}_1$, \dots, $\hat{y}_n$  whose total degree is 
less than $\ell$.
\end{ex}

Now we can give the definition of a scattering diagram.
\begin{defn}[Scattering diagram] \index{scattering diagram}
\label{2defn:scat1}
A \emph{scattering diagram\/} $\frakD$  for
a seed $\fraks$  is a  collection
of walls $\{ \bfw_{\lambda}=(\frakd_{\lambda}, f_{{\lambda}})_{n_{\lambda}}\}_{\lambda\in \Lambda}$
 with respect to
a seed $\fraks$,
where  $\Lambda$ is a  finite or countably infinite index set,
satisfying the following \emph{finiteness condition\/}: \index{finiteness condition}
\begin{itemize}
\item
For each positive integer $\ell$, there are only finitely many
walls whose wall functions are not trivial modulo $\widehat{J}^{\ell}$.
\end{itemize}
\end{defn}

\begin{defn}[Support and singular locus]
For a scattering diagram $\frakD=\{ (\frakd_{\lambda}, f_{{\lambda}})_{n_{\lambda}}\}_{\lambda\in \Lambda}$,
the \emph{support\/} and the \emph{singular locus\/} of $\frakD$ are defined by \index{support!of a scattering diagram}
\index{singular locus (of a scattering diagram)}
\begin{align}
\mathrm{Supp}(\frakD)&=\bigcup_{\lambda\in \Lambda} \frakd_{\lambda},\\
\mathrm{Sing}(\frakD)&=\bigcup_{\lambda\in \Lambda} \partial\frakd_{\lambda}
\cup
\bigcup_{\ss \lambda, \lambda'\in \Lambda \atop \ss \dim 
\frakd_{\lambda}\cap \frakd_{\lambda'}=n-2
} \frakd_{\lambda}\cap \frakd_{\lambda'}.
\end{align}
\end{defn}

\begin{ex}[continued]
\label{2ex:scatter1}
Let us identify $N\simeq_{\fraks} \bbZ^n$ and $M^{\circ}\simeq_{\fraks} \bbZ^n$.
For the cluster pattern $\bfSigma_{\fraks,t_0}$,
we define a set of walls
\begin{align}
\label{2eq:wall1}
\frakD(\bfSigma_{\fraks,t_0}):=
\{\bfw_{i;t}=
(\sigma_i(G_t),1+\hat{y}^{\bfc_{i;t}^+})_{\bfc_{i;t}^+}
\mid i=1,\dots,n;\, t\in \bbT_n
\},
\end{align}
where we discard all duplicate walls so that $\frakD(\bfSigma_{\fraks,t_0})$ consists
of mutually distinct walls.
We may take a different label $(i;t)$ freely for  identical walls.
The collection of walls $\frakD(\bfSigma_{\fraks,t_0})$
is not necessarily a scattering diagram, because the finiteness condition  
 is not  guaranteed.
In fact, there are examples where $\frakD(\bfSigma_{\fraks,t_0})$
does  not satisfy the finiteness condition.
See Remark \ref{2rem:finite1}.
\end{ex}

\subsection{Wall-crossing automorphisms}
\begin{defn}[Admissible curve] A curve $\gamma: [0,1] \rightarrow M_{\bbR}$
is \emph{admissible\/} for a scattering diagram $\frakD$
if it satisfies the following properties:
\begin{itemize}
\item $\gamma$ does not intersect $\mathrm{Sing}(\frakD)$.
\item The end points of $\gamma$ are in $M_{\bbR}\setminus \mathrm{Supp}(\frakD)$.
\item It is a smooth curve, and it intersects  $\mathrm{Supp}(\frakD)$ transversally.
\end{itemize}
\end{defn}
\begin{defn}[Wall-crossing automorphism] \index{wall-crossing automorphism}
For any scattering diagram $\frakD$ and any admissible curve $\gamma$,
we define the associated \emph{wall-crossing automorphism along $\gamma$}, $\frakp_{\gamma,\frakD}\in \mathrm{Aut}( \bbk[[P]])$ as follows:
For each degree $\ell$, take all walls whose wall functions are not trivial modulo $\widehat{J}^{\ell}$.
There are only finitely many such walls.
Let $(\frakd_s, f_{s})_{n_s}$ ($s=1,\dots, r$)
be the walls among them which $\gamma$  intersects.
We may assume that the curve $\gamma(t)$ intersects  $\frakd_s$ at  time $t=t_s$
such that
\begin{align}
0<t_1\leq t_2 \leq \dots \leq t_r<1.
\end{align}
For each $s$, we define the \emph{intersection sign} \index{intersection sign}
\begin{align}
\label{2eq:factor1}
\epsilon_s
=
\begin{cases}
1 & \langle n_{s}, \gamma'(t_s)\rangle<0,\\
-1 & \langle n_{s}, \gamma'(t_s)\rangle>0,\\
\end{cases}
\end{align}
where $\gamma'(t)$ is the velocity vector of $\gamma(t)$ at $t$.
Then, we define
\begin{align}
\label{2eq:pdef1}
\frakp_{\gamma,\frakD}^{\ell}&=
\frakp_{n_r,f_{r}}^{\epsilon_r}\circ 
\cdots
\circ
\frakp_{n_2,f_{2}}^{\epsilon_2}\circ 
\frakp_{n_1,f_{1}}^{\epsilon_1}, \\
\label{2eq:pdef2}
\frakp_{\gamma,\frakD}&=
\lim_{\ell\rightarrow\infty}\frakp_{\gamma,\frakD}^{\ell}.
\end{align}
\end{defn}

Note that $\frakp_{\gamma,\frakD}$ only depends on the homotopy class
of $\gamma$ in $M_{\bbR}\setminus\mathrm{Sing}(\frakD)$. 

\begin{rem}
In \cite{Gross14},
 $\frakp_{\gamma,\frakD}$ is called a \emph{path-ordered product\/} of 
  automorphisms $\frakp_{n_s,f_{s}}^{\epsilon_s}$.
\end{rem}

\begin{ex}[continued]
\label{2ex:p1}
Let $\frakD(\bfSigma_{\fraks,t_0})$ be the collection of walls
in Example \ref{2ex:scatter1}.
Even though 
the finiteness condition  
does not hold in general,
the wall-crossing automorphism is still well-defined
for a curve crossing only finitely many walls.
Let $t_0$, $t_1$, \dots, $t_{r+1}=t\in \bbT_n$ be a sequence of vertices such that 
they are sequentially adjacent with edges labeled by $k_0$, $k_1$,\dots, $k_r$.
Accordingly, we consider an admissible curve $\gamma_{t}^{t_0}$ further satisfying
the following conditions:
\begin{itemize}
\item
It is in the interior of the support $|\Delta(\bfG^{t_0})|$ 
of the $G$-fan $\Delta(\bfG^{t_0})$.
\item
It starts in $\sigma^{\circ}(G_{t_{r+1}})$.
\item
It sequentially intersects  walls
\begin{align}
(\sigma_{k_r}(G_{t_r}),1+\hat{y}^{\bfc_{k_r;t_r}^+})_{\bfc_{k_r;t_r}^+},
\dots,
(\sigma_{k_0}(G_{t_0}),1+\hat{y}^{\bfc_{k_0;t_0}^+})_{\bfc_{k_0;t_0}^+}.
\end{align}
\item
It ends in  $\sigma^{\circ}(G_{t_0})$.
\end{itemize}
By Proposition \ref{2prop:duality3},
for each $s=0,\dots,r$,
the $c$-vector $\bfc_{k_s;t_s}$
is inward for the cone $\sigma_{k_s}(G_{t_s})$.
Thus,
when $\gamma_t^{t_0}$ crosses
$\sigma_{k_s}(G_{t_s})$ to \emph{get  into\/} $\sigma(G_{t_s})$,
the velocity  vector $\gamma'$ and the $c$-vector $\bfc_{k_s;t_s}$
are always in the \emph{same\/} direction with respect to $\sigma_{k_s}(G_{t_s})$.
Since
the normal vector of the wall is $n_s=\varepsilon_{k_s;t_s} \bfc_{k_s;t_s}$,
 the factor in \eqref{2eq:factor1}
is given by  $-\varepsilon_{k_s;t_s}$.
Then, by \eqref{2eq:fq1} and \eqref{2eq:pq1},
 the wall-crossing automorphism along $\gamma_t^{t_0}$ is
 identified with $\frakq_{t}^{t_0}$ in \eqref{2eq:fq1}
 as follows:
\begin{align}
\label{2eq:pq2}
\begin{split}
\tilde\frakp_{t}^{t_0}:=
\frakp_{\gamma_t^{t_0},\frakD(\bfSigma_{\fraks,t_0})}
&=
\frakp_{\bfc_{k_0;t_0}^+, 1+\hat{y}^{\bfc_{k_0;t_0}^+}}
^{-\varepsilon_{k_0,t_0}}
\circ
\cdots
\circ
\frakp_{\bfc_{k_r;t_r}^+, 1+\hat{y}^{\bfc_{k_r;t_r}^+}}
^{-\varepsilon_{k_r,t_r}}
\\
&=
\frakq_{k_0;t_0}
\circ
\cdots
\circ
\frakq_{k_r;t_r}
\\
&=\frakq_{t}^{t_0}.
\end{split}
\end{align}
Note that the factor $-\varepsilon_{i;t}$ in \eqref{2eq:pq1} is absorbed
in coordination with the factor in \eqref{2eq:factor1}.
\end{ex}

\begin{defn}[Equivalence/Consistency]
\
\par
\begin{itemize}
\item
Two scattering diagrams $\frakD$ and $\frakD'$ with a common initial seed $\fraks$ are
\emph{equivalent\/} if, for any curve $\gamma$ that is admissible for both $\frakD$ and $\frakD'$, the equality 
$\frakp_{\gamma,\frakD}=\frakp_{\gamma,\frakD'}$ holds. \index{scattering diagram!equivalent}
\item
A scattering diagram $\frakD$ is \emph{consistent\/} if for any admissible curve $\gamma$, \index{scattering diagram!consistent}
the associated wall-crossing automorphism $\frakp_{\gamma,\frakD}$  depends 
only on
the end points of $\gamma$.
\end{itemize}
\end{defn}

\begin{rem}
For a given scattering diagram,
one   obtains equivalent scattering diagrams
by repeating the following procedures:
\begin{itemize}
\item
To join a pair of walls 
$(\frakd_1,f)_n$ and $(\frakd_2,f)_n$
intersecting in their common face $\frakd_1\cap \frakd_2$ of codimension two
into a wall $(\frakd_1\cup \frakd_2,f)_n$,
or to split it conversely.
\item
To join a pair of walls $(\frakd,f_1)_n$ and $(\frakd,f_2)_n$ into a wall $(\frakd,f_1f_2)_n$,
or to split it conversely. One may join infinitely many walls
with a common support  as well
by taking the infinite product of wall functions, or split it conversely.
\item
To add or to remove a trivial wall $(\frakd, 1)$.
\end{itemize}
Applying these operations,
and especially removing all trivial walls,
one can obtain a scattering diagram
whose support is minimal among other equivalent diagrams.
\end{rem}

The following is the first fundamental theorem
 on scattering diagrams
in view of the application to cluster algebra theory.

\begin{thm}[{\cite[Theorems~1.12 \&1.28]{Gross14}}]
\label{2thm:scat1}
Let $\fraks$ be any initial seed for $\Gamma$.
Then, there exists a scattering diagram $\frakD_{\fraks}$
satisfying the following properties:
\begin{itemize}
\item[(a).] $\frakD_{\mathrm{in},\fraks}:=\{ (e_i^{\perp}, 1+\hat{y}_i)_{e_i} \mid i=1,\dots, n\}
\subset \frakD_{\fraks}$.
\item[(b).]  $\frakD_{\fraks}\setminus \frakD_{\mathrm{in},\fraks}$ consists only of outgoing walls, and their normal vectors  are not equal to $e_i$ for any $i$.
\item[(c).]  $\frakD_{\fraks}$ is consistent.
\end{itemize}
Moreover, such a scattering diagram is unique  up to the equivalence.
\end{thm}

We call $\frakD_{\fraks}$ a \emph{cluster scattering diagram\/} (CSD for short) for $\fraks$.
\index{scattering diagram!cluster}
\index{CSD|see{cluster scattering diagram}}
\index{cluster scattering diagram (CSD)}

\begin{rem}
The second statement of (b) is taken from 
\cite[Theorem~1.28, Remark~1.29]{Gross14},
while
all other statements are from \cite[Theorem~1.12]{Gross14}.
\end{rem}

From now on we assume that $\frakD_{\fraks}$ has the minimal support
 among other equivalent diagrams.
 Under the identification $M_{\bbR}\simeq_{\fraks} \bbR^n$,
we define $\frakS(B_{t_0}):=\mathrm{Supp}(\frakD_{\fraks})\subset \bbR^n$.
Now we see that the properties of the support diagram $\frakS(B_{t_0})$ given in
Proposition \ref{2prop:SD1} follow from Definition \ref{2defn:wall1} and
 Theorem \ref{2thm:scat1}.

 \begin{rem}
\label{22rem:equivalence1}
There is some redundancy of fixed data,
because we do not require the coprimeness of
$\d_1$, \dots, $\d_n$ here.
Suppose that the exchange matrix $B$
in \eqref{2eq:initials1}  is indecomposable.
For a fixed data $\Gamma$,
another  fixed data $\Gamma'$ is called a \emph{rescaling\/} of $\Gamma$ \index{rescaling (of a fixed data)}
if  there is some  rational number $\lambda>0$ such that
the following relations hold:
\begin{align}
N'&= N,\\
\label{22eq:scale1}
\{n_1, n_2 \}'&=\lambda \{n_1, n_2 \},\\
\label{23eq:scale2}
(N^{\circ})'&= \lambda^{-1} N^{\circ},\\
\label{23eq:scale3}
\d'_i &= \lambda^{-1} \d_i,\\
\label{23eq:scale4}
(M^{\circ})'&= \lambda M^{\circ}.
\end{align}
For such a pair,
 a seed $\fraks$ for $\Gamma$ is also regarded as a seed for $\Gamma'$.
 Let $\phi_{\lambda}: \bbk[[P]]\rightarrow {\bbk[[P']]}$, $x^m \mapsto x^{\lambda m}$
 be the induced algebra isomorphism.
 Then, we have, for $m\in P$,
 \begin{align}
\begin{split}
(\frakp'_{n,\phi_{\lambda}(f)}\circ \phi_{\lambda})(x^{ m})&=
\frakp'_{n,\phi_{\lambda}(f)}(x^{\lambda m})=
x^{\lambda m} \phi_{\lambda}(f)^{\langle \d'_0(n)n, \lambda m\rangle}
\\
&=x^{\lambda m}  \phi_{\lambda}(f)^{\langle \d(n)n,  m\rangle}
=( \phi_{\lambda}\circ \frakp_{n,f})(x^m).
\end{split}
\end{align}
Thus,  cluster scattering diagrams $\frakD_{\fraks}$ and  $\frakD'_{\fraks}$
for $\Gamma$ and $\Gamma'$, respectively, are identified under the above correspondence.
Moreover,
for  $x_i$, $y_i$, $b_{ij}$
 and
 $x'_i$, $y'_i$, $b'_{ij}$ in
\eqref{2eq:initials1} for $\Gamma$ and $\Gamma'$, respectively,
the following relations hold:
\begin{align}
x'_i = x_i^{\lambda},
\quad y'_i = y_i,
\quad b'_{ij}=b_{ij}. 
\end{align}
In particular, we see that a cluster scattering diagram $\frakD_{\fraks}$ depends only on
the matrix $B=(b_{ij})$ as expected. 
In general, when $B$ is not indecomposable, we may apply the above rescaling to
each indecomposable component of $B$.
\end{rem}

\subsection{Rank 2 examples of consistent scattering diagrams $\frakD_{\fraks}$}

\label{2subsec:rank2}

Let us present the scattering diagrams
$\frakD_{\fraks}$ in Theorem \ref{2thm:scat1}
in the  rank 2 case
under the identification $N\simeq_{\fraks} \bbZ^n$ and $M_{\bbR}\simeq_{\fraks} \bbR^n$,
following \cite{Gross14}.
Using the equivalence in Remark \ref{22rem:equivalence1},
we may assume that $\{e_2,e_1\}=1$ without loosing generality.
We use the same convention and notations in  Section \ref{2subsec:rank2G1}.

(I). Finite type.
The scattering diagram $\frakD_{\fraks}$ is equivalent to
$\frakD(\bfSigma_{\fraks,t_0})$ constructed in
Example \ref{2ex:scatter1}.
In other words, $\frakD(\bfSigma_{\fraks,t_0})$ itself is a consistent
scattering diagram.
\par
(a). Type $A_2$. Let
\begin{align}
\d_1=\d_2=1;\
\hat{y}_1=x_2,\ \hat{y}_2=x_1^{-1}.
\end{align}
The scattering diagram $\frakD(\bfSigma_{\fraks,t_0})$
consists of five distinct walls of the form in  \eqref{2eq:wall1}.
They are rearranged up to the equivalence of scattering diagrams as
 two incoming walls  and one outgoing walls as
 \begin{align}
 (\bfe_1^{\perp}, 1+\hat{y}_1)_{\bfe_1},
 \
 (\bfe_2^{\perp}, 1+\hat{y}_2)_{\bfe_2}, 
 \
 (\sigma((1,-1)), 1+\hat{y}_1\hat{y}_2)_{(1,1)}.
 \end{align}
 This is the scattering diagram $\frakD_{\fraks}$,
 which is
 depicted in Figure \ref{2fig:scat1}.
 The only property to be checked is the following consistency condition
 \begin{align}
 \label{2eq:consis1}
 \frakp_{\gamma_1, \frakD_{\fraks}}=
 \frakp_{\gamma_2, \frakD_{\fraks}}
 \end{align}
 for the admissible curves $\gamma_1$ and $\gamma_2$ in
 Figure \ref{2fig:scat1}.
 One can check it easily by the direct calculation as follows:
 \begin{figure}
\centering
\leavevmode
\xy
0;/r1.2mm/:,
(0,12)*{\text{\small $ 1+\hat{y}_1$}};
(16,0)*{\text{\small $1+\hat{y}_2$}};
(18,-10)*{\text{\small $1+\hat{y}_1\hat{y}_2$}};
(-5,5)*{\gamma_1};
(3,-6)*{\gamma_2};
(7,7)*+{\bullet};(-7,-7)*+{\bullet};
(7,7)*+{};(-7,-7)*+{}
   **\crv{(0,6.7)&(-6.7,0)}
          ?>*\dir{>};
       (7,7)*+{};(-7,-7)*+{}
       **\crv{(6.7,0)&(0,-6.7)}
       ?>*\dir{>};
(0,0)="A",
\ar@{-}"A"+(0,0); "A"+(10,0)
\ar@{-} "A"+(0,0); "A"+(0,10)
\ar@{-} "A"+(0,0); "A"+(-10,0)
\ar@{-} "A"+(0,0); "A"+(0,-10)
\ar@{-} "A"+(0,0); "A"+(10,-10)
\endxy
\caption{Consistent scattering diagram $\frakD_{\fraks}$ of Type $A_2$.}
\label{2fig:scat1}
\end{figure}
 \begin{align}
 \begin{split}
  \frakp_{\gamma_1, \frakD_{\fraks}}(x_1)
  &=
    \frakp_{\bfe_2, 1+\hat{y}_2}( \frakp_{\bfe_1, 1+\hat{y}_1}(x_1))
    \\
   &=
    \frakp_{\bfe_2, 1+\hat{y}_2}( x_1(1+\hat{y}_1))
    \\
    &=
    x_1(1+\hat{y}_1(1+\hat{y}_2))
    \\
    &=
    x_1(1+\hat{y}_1+\hat{y}_1\hat{y}_2).
    \end{split}
    \end{align}
    \begin{align}
    \begin{split}
      \frakp_{\gamma_2, \frakD_{\fraks}}(x_1)   
  &=
    \frakp_{\bfe_1, 1+\hat{y}_1}(\frakp_{(1,1), 1+\hat{y}_1\hat{y}_2}( \frakp_{\bfe_2, 1+\hat{y}_2}(x_1)))
    \\
      &=
    \frakp_{\bfe_1, 1+\hat{y}_1}(\frakp_{(1,1), 1+\hat{y}_1\hat{y}_2}( x_1))
    \\
          &=
    \frakp_{\bfe_1, 1+\hat{y}_1}( x_1(1+\hat{y}_1\hat{y}_2))
    \\
    &=
    x_1(1+\hat{y}_1)(1+\hat{y}_1\hat{y}_2 (1+\hat{y}_1)^{-1})
    \\
        &=
    x_1(1+\hat{y}_1+\hat{y}_1\hat{y}_2).
 \end{split}
 \end{align}
 Similarly,
  \begin{align}
 \begin{split}
  \frakp_{\gamma_1, \frakD_{\fraks}}(x_2)
  &=
    \frakp_{\bfe_2, 1+\hat{y}_2}( \frakp_{\bfe_1, 1+\hat{y}_1}(x_2))
    \\
   &=
    \frakp_{\bfe_2, 1+\hat{y}_2}( x_2)
    \\
    &=
    x_2(1+\hat{y}_2).
        \end{split}
    \end{align}
    \begin{align}
    \begin{split}
      \frakp_{\gamma_2, \frakD_{\fraks}}(x_2)   
  &=
    \frakp_{\bfe_1, 1+\hat{y}_1}(\frakp_{(1,1), 1+\hat{y}_1\hat{y}_2}( \frakp_{\bfe_2, 1+\hat{y}_2}(x_2)))
    \\
      &=
    \frakp_{\bfe_1, 1+\hat{y}_1}(\frakp_{(1,1), 1+\hat{y}_1\hat{y}_2}
    ( x_2(1+\hat{y}_2)))
    \\
          &=
    \frakp_{\bfe_1, 1+\hat{y}_1}( x_2(1+\hat{y}_1\hat{y}_2)
    (1+\hat{y}_2(1+\hat{y}_1\hat{y}_2)^{-1})
    )
    \\
          &=
    \frakp_{\bfe_1, 1+\hat{y}_1}( x_2    (1+\hat{y}_2+\hat{y}_1\hat{y}_2)
    )
    \\
    &=
 x_2    (1+\hat{y}_2(1+\hat{y}_1)^{-1}+\hat{y}_1\hat{y}_2(1+\hat{y}_1)^{-1})
    \\
        &=
    x_2(1+\hat{y}_2).
 \end{split}
 \end{align}

It is also instructive to give an alternative derivation of
\eqref{2eq:consis1}
in view of Proposition \ref{2prop:detrop1}.
 Recall the action of a permutation $\sigma\in S_n$
 in \eqref{2eq:sigmax1}--\eqref{2eq:sigmag1}.
 We know from \eqref{2eq:A2G1} that
 \begin{align}
 \label{2eq:pentagon1}
 G_{t_2}=s_{12} G_{t_{-3}},
 \end{align}
 where $s_{12}$ is the transposition of 1 and 2.
Also, the following property (the \emph{pentagon periodicity\/}) is the first thing we learn in cluster algebra theory.
\index{pentagon periodicity}
 \begin{align}
 \label{2eq:pentagon2}
  \bfx_{t_2}=s_{12} \bfx_{t_{-3}}.
 \end{align}
Therefore, by  Proposition \ref{2prop:detrop1}, we have  (for $x$-variables)
\begin{align}
 \frakq_{t_2}^{t_0}=\frakq_{t_{-3}}^{t_0}.
\end{align}
Thus, by \eqref{2eq:pq2},
\begin{align}
\label{2eq:const1}
 \tilde\frakp_{t_2}^{t_0}=\tilde\frakp_{t_{-3}}^{t_{0}}.
\end{align}
Since
$\frakp_{\gamma_1, \frakD_{\fraks}}=( \tilde\frakp_{t_2}^{t_0})^{-1}$
and
$\frakp_{\gamma_2, \frakD_{\fraks}}=( \tilde\frakp_{t_{-3}}^{t_0})^{-1}$,
 we have
\eqref{2eq:consis1}.
Moreover, it clarifies  the meaning of the  consistency condition
\eqref{2eq:consis1};
namely, \emph{it guarantees the detropicalization  of $x$-variables}.
\par
(b). Type $B_2$.
Let
\begin{align}
\d_1=1, \ \d_2=2;\
\hat{y}_1=x_2^2,\ \hat{y}_2=x_1^{-1}.
\end{align}
The situation is the same as type $A_2$,
and the outgoing walls are
\begin{align}
\left(
\sigma((1,-2)),
1+\hat{y}_1\hat{y}_2
\right)_{(1,1)},
\
\left(
\sigma((1,-1)),
1+\hat{y}_1\hat{y}_2^2
\right)_{(1,2)}.
\end{align}
\par
(c). Type $G_2$.
Let
\begin{align}
\d_1=1, \ \d_2=3;\
\hat{y}_1=x_2^3,\ \hat{y}_2=x_1^{-1}.
\end{align}
The situation is the same as type $A_2$,
and the outgoing walls are
\begin{align}
\begin{split}
\left(
\sigma((1,-3)),
1+\hat{y}_1\hat{y}_2
\right)_{(1,1)},
\
\left(
\sigma((1,-2)),
1+\hat{y}_1^2\hat{y}_2^3
\right)_{(2,3)},
\\\left(
\sigma((2,-3)),
1+\hat{y}_1\hat{y}_2^2
\right)_{(1,2)},
\
\left(
\sigma((1,-1)),
1+\hat{y}_1\hat{y}_2^3
\right)_{(1,3)}.
\end{split}
\end{align}
\par
(II). Infinite type.
The consistent scattering diagram
$\frakD_{\fraks}$
is an extension of $\frakD(\bfSigma_{\fraks,t_0})$
such that
the supports of  additional walls are in the complement
of $|\Delta(\bfG^{t_0})|$.
In particular, $\frakD(\bfSigma_{\fraks,t_0})$ is a scattering diagram but it is not consistent.
\par
(d). Type $A_1^{(1)}$.
 Let
\begin{align}
\d_1=\d_2=2;\
\hat{y}_1=x_2^2,\ \hat{y}_2=x_1^{-2}.
\end{align}
There is one additional wall to $\frakD(\bfSigma_{\fraks,t_0})$ \cite[Example~1.15]{Gross14}, which is
\begin{align}
\label{2eq:A11}
\Biggl(
\sigma((1,-1)),
\left(
\sum_{k=0}^{\infty}
\hat{y}_1^k\hat{y}_2^k
\right)^2
\Biggr)_{(1,1)}.
\end{align}

(e). Type $A_2^{(2)}$.
Let
\begin{align}
\d_1=1, \ \d_2=4;\
\hat{y}_1=x_2^4,\ \hat{y}_2=x_1^{-1}.
\end{align}
There is one additional wall to $\frakD(\bfSigma_{\fraks,t_0})$ \cite[Theorem 3.4.]{Reading18}, which is
\begin{align}
\Biggl(
\sigma((1,-2)),
(1+\hat{y}_1\hat{y}_2^{2})
\left(
\sum_{k=0}^{\infty}
\hat{y}_1^{k}\hat{y}_2^{2k}
\right)^2
\Biggr)_{(1,2)}.
\end{align}
\par
(f). Non-affine type. There are infinitely many additional walls
inside the irrational cone spanned by  $\bfv$ and $\bfv'$
  in \eqref{2eq:ds1}.
The region is  informally called the \emph{Badlands}. We may also call it the \emph{dark side\/} \index{Badlands}
from the viewpoint of cluster patterns.
The explicit description of walls therein is not yet known.
However, for the skew-symmetric case $\d_1=\d_2$,
the existence of walls for every rational slope was proved in \cite[Example 7.10]{Davison19}.

\newpage
\section{Scattering diagrams and $F$-polynomials}
In this section we show how $F$-polynomials  and $x$-variables are related with
the consistent scattering diagram $\frakD_{\fraks}$.
The Laurent positivity  
follows from the positivity of theta functions in \cite{Gross14}.

\subsection{Mutations of scattering diagrams}

We continue to assume that  a fixed data $\Gamma$ satisfies Injectivity Assumption
throughout the section.

The following definition originates in \cite[\S2.1]{Fock03}.
\begin{defn}[Seed mutation] \index{mutation!seed (for a fixed data)}
Let $\fraks=(e_1,\dots,e_k)$ be any seed for a fixed data $\Gamma$.
For any $k=1,\dots,n$,
we define a new seed $\mu_k(\fraks)=\fraks'=(e_1',\dots,e_n')$
by
\begin{align}
\label{2eq:emut1}
e'_i
=
\begin{cases}
-e_k
& i=k,\\
e_i +[b_{ki}]_+ e_k & i\neq k,
\end{cases}
\quad
b_{ij}=\{\delta_i e_i,e_j\},
\end{align}
which is called the \emph{mutation\/} of $\fraks$ in direction $k$.
\end{defn}

The transformation \eqref{2eq:emut1} is written by a matrix
 form $P$ in Lemma \ref{2lem:square1}.
Thus, $e_1',\dots,e_n'$ are certainly a basis of $N$.
Also,
under the mutation \eqref{2eq:emut1},
The basis $\d_1e_1,\dots, \d_n e_n$ of $N^{\circ}$ mutates as
\begin{align}
\label{2eq:emut2}
\d_i e'_i
=
\begin{cases}
-\d_ke_k
& i=k,\\
\d_i e_i +[-b_{ik}]_+ \d_k e_k & i\neq k,
\end{cases}
\end{align}
where we used the skew-symmetry $b_{ki}\d_i=-b_{ik}\d_k$.
Thus, $\d_1e_1',\dots,\d_n e_n'$ are a basis of $N^{\circ}$
by the same reason as above.
Therefore, $\fraks'$ is certainly a seed in the sense of Definition
\ref{2defn:seed2}.

On can easily confirm that the mutation \eqref{2eq:emut2}
induces the following mutation for the basis $f_1,\dots,f_n$ of $M^{\circ}$,
which are dual to $\d_1 e_1, \dots, \d_n e_n$:
\begin{align}
\label{2eq:fmut1}
f'_i
=
\begin{cases}
\displaystyle
-f_k+\sum_{j=1}^n [-b_{jk}]_+ f_j
& i=k,\\
f_i  & i\neq k.
\end{cases}
\end{align}
Also, we define the matrix $B'=(b'_{ij})$, $b'_{ij}=\{ \d_i e'_i, e'_j\}$
as in \eqref{2eq:initials1}.
Then, $B'$ coincides with the matrix mutation of $B$ in direction
 $k$ by \eqref{2eq:bmut2} with $\varepsilon=1$.

\begin{rem}
The transformation \eqref{2eq:fmut1} already appeared in
\eqref{2eq:ff1}
under the identification $M_{\bbR} \simeq_{\fraks} \bbR^n$.
\end{rem}

\begin{rem}
\label{2rem:mu1}
For $\fraks'=\mu_k(\fraks)$,
$\mu_k(\fraks')= \fraks$ does not hold,
 because $b'_{ki}=-b_{ki}$.
Namely, $\mu_k$ is \emph{not\/} an involution on the set of all seeds for a fixed data $\Gamma$.
This causes a technical problem, for example, in the proof of
the forthcoming Theorem \ref{2thm:clasterscatter1}.
To remedy the situation,
we also define another mutation of a seed $\fraks'=\mu_{k}^-(\fraks)$ by
\begin{align}
\label{2eq:emut3}
e'_i
=
\begin{cases}
-e_k
& i=k,\\
e_i +[-b_{ki}]_+ e_k & i\neq k.
\end{cases}
\end{align}
Again, we define the matrix $B'=(b'_{ij})$, $b'_{ij}=\{ \d_i e'_i, e'_j\}$.
Then, $B'$ coincides with the matrix mutation of $B$ in direction
 $k$ by \eqref{2eq:bmut2} with $\varepsilon=-1$.
 Moreover, we have
\begin{align}
\label{2eq:muinv1}
 \mu_k^- ( \mu^+_k ( \fraks))=
 \mu_{k}^+ ( \mu_{k}^-(\fraks))=
 \fraks,
\end{align}
where $\mu_k^+=\mu_k$. We call $\mu_k^{\pm}$ the \emph{signed mutations\/} of $\fraks$. \index{mutation!signed}
They appeared, for example, in \cite{Iwaki14a}.
\end{rem}

For any  seed $\fraks=(e_1,\dots,e_n)$ and $k=1,\dots,n$,
we define a linear map
\begin{align}
\label{2eq:Sk4}
\begin{matrix}
S_{k;\fraks}:& M_{\bbR}& \rightarrow &M_{\bbR}\\
& m & \mapsto&
m+ \langle \d_k e_k, m\rangle p^*(e_k).
\end{matrix}
\end{align}
and its dual
\begin{align}
\label{2eq:Sk6}
\begin{matrix}
S^*_{k;\fraks}:& N& \rightarrow &N\\
& n & \mapsto&
n+ \{ \d_k e_k, n\}e_k.
\end{matrix}
\end{align}
The following equality can be easily verified:
\begin{align}
\langle S^*_{k;\fraks}(n), S_{k;\fraks}(x)\rangle
=\langle n, x\rangle.
\end{align}
Let
\begin{align}
 \calH_{k,+}=\{ m\in M_{\bbR} \mid \langle  e_k, m\rangle\geq 0\},
 \quad
 \calH_{k,-}=\{ m\in M_{\bbR} \mid \langle  e_k, m\rangle\leq 0\}.
\end{align}
Then, following \cite{Gross14},
we  define a piecewise-linear map
\begin{align}
\label{2eq:Tk4}
\begin{matrix}
T_{k;\fraks}:& M_{\bbR}& \rightarrow &M_{\bbR}\\
& m & \mapsto& 
\begin{cases}
S_{k;\fraks}(m) & m \in  \calH_{k,+},\\
m & m \in  \calH_{k,-}.
\end{cases}
\end{matrix}
\end{align}

\begin{rem} Under the identification $M_{\bbR} \simeq_{\fraks} \bbR^n$, it coincides with the map $T_{k;t_0}$
in \eqref{2eq:Tk1}.
\end{rem}

\begin{defn}[Mutation of $\frakD_{\fraks}$]
\label{2defn:mutD1}
For the consistent scattering diagram $\frakD_{\fraks}$ in  Theorem \ref{2thm:scat1} with minimal support,
we define a collection of walls $T_k(\frakD_{\fraks})$,
called the \emph{mutation of $\frakD_{\fraks}$ in direction $k$}, \index{mutation!of a cluster scattering diagram}
by collecting all walls obtained below:
\par
(a).
For each wall $(\frakd, f)_n$ of $\frakD_{\fraks}$
other than $(e_k^{\perp}, 1+\hat{y}_k)_{e_k}$,
let $\frakd_+=\frakd\cap \calH_{k,+}$,
and $\frakd_-=\frakd\cap \calH_{k,-}$.
We split the wall $(\frakd, f)_n$ into two walls
\begin{align}
(\frakd_+, f)_n,
\quad
(\frakd_-, f)_n,
\end{align}
where, if one of  $\frakd_+$ and  $\frakd_-$ is codimension more than 1,
we throw that part away.
Then,
we replace $(\frakd_+, f)_n$ with a new wall
$(S_{k;\fraks}(\frakd_+), S_{k;\fraks}(f))_{S^*_{k;\fraks}(n)}$,
where
 $S_{k;\fraks}(f)$ is the formal power series obtained from
$f$ by replacing each term $x^m$ ($m\in P$) with $x^{S_{k;\fraks}(m)}$,
while we leave $(\frakd_-, f)_n$ as it is.
\par
(b). Replace $(e_k^{\perp}, 1+\hat{y}_k)_{e_k}$ with  $(e'_k{}^{\perp}, 1+\hat{y}'_k)_{e'_k}$,
where $e'_k{}^{\perp}=e_k{}^{\perp}$ and $\hat{y}'_k=\hat{y}_k^{-1}$
by \eqref{2eq:emut1}.
\par
To summarize, in both (a) and (b),  each support of a  wall (after splitting) is transformed by
the piecewise-linear map $T_{k;\fraks}$.
On the other hand, each wall function in  (a)
obeys the same \emph{linear map\/} applied to its support,
while the case (b) is exceptional.
\end{defn}

\begin{rem}
\label{2rem:wall1}
Here we encounter a temporal  problem that the walls above are not
walls for the initial seed $\fraks$ by the following reasons:
\begin{itemize}
\item
The normal vectors of
walls $T_{k;\fraks}(\frakd)$ may not  belong to 
$N^+=N_{\fraks}^+$  in general,
because the coefficients of $e_k$ might be negative.
\item
Accordingly,
the wall functions
 $T_{k;\fraks}(f)$ may not  belong to 
$ \bbk[[P]]$ in general.
In particular,
the wall function
 $1+\hat{y}'_k=1+\hat{y}_k^{-1}$ does not  belong to 
$ \bbk[[P]]$.
\end{itemize}
These problems will be harmonically solved in the next theorem.
\end{rem}

Here is the second fundamental theorem on scattering diagrams
in view of the application to cluster algebra theory.

\begin{thm}[{\cite[Theorem~1.24]{Gross14}}]
\label{2thm:scat2}
The collection of walls $T_k(\frakD_{\fraks})$
is a scattering diagram for the seed $\fraks'=\mu_k(\fraks)$.
Moreover, $T_k(\frakD_{\fraks})$ is equivalent to
the consistent scattering diagram $\frakD_{\fraks'}$.
\end{thm}

Let us explain how Proposition
\ref{2prop:SD2} follows from this theorem.

We do the identification $M_{\bbR}\simeq_{\fraks} \bbR^n$ for
$\frakD_{\fraks}$,
and  $M_{\bbR}\simeq_{\fraks'} \bbR^n$ for
$\frakD_{\fraks'}$, respectively.
Under the change of a basis of $M_{\bbR}$
in \eqref{2eq:fmut1},
two identifications are related by the isomorphism
$\eta_{t_0}^{t_1}$ in \eqref{2eq:eta1}.
All relevant  maps are summarized  in the following
 commutative diagram,
 where
 $\varphi_{t_0}^{t_1}
 =\eta_{t_0}^{t_1}\circ T_{k;t_0}$
as in Proposition \ref{2prop:Gfan2}:
\begin{align}
\label{2eq:Tphi1}
  \xymatrix{
    M_{\bbR} \ar[r]^{T_{k;\fraks} } \ar[d]^{\simeq_{\fraks}} & M_{\bbR}  \ar[d]^{\simeq_{\fraks}}  \ar[dr]^{\simeq_{\fraks'}}   &\\
\bbR^n \ar[r]^{T_{k;t_0}}
\ar@/_25pt/[rr]^{\varphi_{t_0}^{t_1}} 
& \bbR^n \ar[r]^{\eta_{t_0}^{t_1}}& \bbR^n.
 }
\end{align}
Then, for the support $\frakd$ of each wall of $\frakD_{\fraks}$,
the map $\frakd\mapsto T_{k;\fraks}(\frakd)$
in $M_{\bbR}$
is identified with the map
$\frakd\mapsto \varphi_{t_0}^{t_1}(\frakd)$
in $\bbR^n$.
Thus, by forgetting the wall functions, we obtain Proposition
\ref{2prop:SD2}.

\subsection{Relation between $\frakD(\bfSigma_{\fraks,t_0})$ and $\frakD_{\fraks}$}

So far, 
what we know about
the relation 
 between
$\frakD(\bfSigma_{\fraks,t_0})$ and $\frakD_{\fraks}$
are
 the rank 2 examples
and the relations of the supports in Theorem \ref{2thm:SCprove1}.
Below we show  that the  walls of $\frakD(\bfSigma_{\fraks,t_0})$
 are  integrated into a part of
the consistent scattering diagram $\frakD_{\fraks}$
in an essential way.
To do that,
we apply a parallel construction of the mutation of $\frakD_{\fraks}$
to $\frakD(\bfSigma_{\fraks,t_0})$.

Throughout this subsection, we omit the normal vector $n\in N^+$ of a wall
$(\frakd, f)_n$ for simplicity, because it is recovered from $\frakd$ 
or $f$ without ambiguity.

For $t_0\in \bbT_n$ and $k=1,\dots,n$,
we define a linear map
\begin{align}
\label{2eq:Sk5}
\begin{matrix}
S_{k;t_0}:& \bbR^n& \rightarrow &\bbR^n\\
& \bfv & \mapsto&
(I+(B_{t_0})^{\bullet k})\bfv,
\end{matrix}
\end{align}
which correspond to the linear map $S_{k;\fraks}$ in \eqref{2eq:Sk4}
 under the identification
$M_{\bbR}\simeq_{\fraks} \bbR^n$.
Let
$\bbR^n_{k,+}$ and $\bbR^n_{k,-}$
be the ones in \eqref{2eq:Rk1}.
Then, 
the piecewise-linear map  \eqref{2eq:Tk1},
which corresponds to \eqref{2eq:Tk4},
is written as
\begin{align}
\label{2eq:Tk5}
\begin{matrix}
T_{k;t_0}:& {\bbR}^n & \rightarrow &{\bbR}^n \\
& \bfv & \mapsto& 
\begin{cases}
S_{k;t_0}(\bfv) & \bfv \in \bbR^n_{k,+},\\
\bfv & \bfv \in  \bbR^n_{k,-}.
\end{cases}
\end{matrix}
\end{align}

Let us explicitly denote the dependence of $\haty$-variables
to the initial vertex $t_0$ as
\begin{align}
 \hat{y}_{t_0}^{\bfv}:=x^{B_{t_0}\bfv}.
 \end{align}

\begin{defn}[Mutation of $\frakD(\bfSigma_{\fraks,t_0})$]
\label{2defn:mutD2}
For the collection of walls $\frakD(\bfSigma_{\fraks,t_0})$,
let
$T_k(\frakD(\bfSigma_{\fraks,t_0}))$ be the collection of walls 
$T_k(\frakd, f)$
obtained from each wall $(\frakd, f)$ of
$\frakD(\bfSigma_{\fraks,t_0})$
as below.
\par
(a). For each  wall $(\frakd,f)=
(\sigma_i(G_t^{t_0}),1+\hat{y}_{t_0}^{\bfc_{i;t}^{t_0+}})$
such that $ \bfc_{i;t}^{t_0+}\neq\bfe_k$,
we define
\begin{align}
T_k(\frakd,f)
=
\begin{cases}
(S_{k;t_0}(\frakd),S_{k;t_0}(f)) & \frakd\subset \bbR^n_{k,+},
\\
(\frakd,f)
& \frakd\subset \bbR^n_{k,-},
\end{cases}
\end{align}
where
\begin{align}
S_{k;t_0}(
1+\hat{y}_{t_0}^{\bfc_{i;t}^{t_0+}}
)
=
S_{k;t_0}(
1+x^{\hat\bfc_{i;t}^{t_0+}}
)
:=
1+x^{S_{k;t_0}(\hat\bfc_{i;t}^{t_0+})}.
\end{align}
Note that by Proposition \ref{2prop:int1}
either $ \frakd\subset \bbR^n_{k,+}$ or  $\frakd\subset \bbR^n_{k,-}$
occurs.
Thus,
we do not have to divide a wall.
\par
(b). For each wall
$
(\frakd,f)=(\sigma_i(G_t^{t_0}),1+\hat{y}_{t_0}^{\bfe_k} )$
we define
\begin{align}
T_k(\frakd,f)=
(\sigma_i(G_t^{t_0}),1+\hat{y}_{t_0}^{-\bfe_k} ).
\end{align}
\par
To summarize, in both (a) and (b)  each support $\frakd$ is transformed by
the piecewise-linear map $T_{k;t_0}$.
On the other hand, each wall function $f$ in  (a)
obeys the same \emph{linear map\/} applied to its support,
while the case (b) is exceptional.
\end{defn}

\begin{rem}
The same remark as Remark \ref{2rem:wall1} is applied to walls in the above.
\end{rem}

Let $t_1\in \bbT_n$ be the vertex that is 
$k$-adjacent to  the  initial vertex $t_0$.
Let $\eta_{t_0}^{t_1}$ be the linear map in \eqref{2eq:eta1}.
For any wall 
$(\frakd,f)\in T_k(\frakD(\bfSigma_{\fraks,t_0}))$,
we define a new wall by
\begin{align}
\label{2eq:walleta1}
 \eta_{t_0}^{t_1}(\frakd,f=1+x^{\bfv}):=
(\eta_{t_0}^{t_1}(\frakd), 1+x^{\eta_{t_0}^{t_1}(\bfv)}).
\end{align}

We have a parallel result to Theorem \ref{2thm:scat2}.
Also, this is an upgrade of
Proposition \ref{2prop:Gfan1} including wall functions.

\begin{prop}
\label{2prop:cscat2}
The walls  $ \eta_{t_0}^{t_1}(\frakd,f)$
in \eqref{2eq:walleta1} 
are  walls of $\frakD(\bfSigma_{\fraks',t_1})$
with $\fraks'=\mu_k(\fraks)$.
Moreover, it induces the following bijections of walls:
\begin{align}
\label{2eq:wallbj1}
\begin{matrix}
\frakD(\bfSigma_{\fraks,t_0})
&
\buildrel
T_k
\over
\rightarrow
&
T_k(\frakD(\bfSigma_{\fraks,t_0}))
&
\buildrel
\eta_{t_0}^{t_1}
\over
\rightarrow
&
\frakD(\bfSigma_{\fraks',t_1})\\
(\sigma_{i}(G_{t}^{t_0}), 1+\haty_{t_0}^{\bfc_{i;t}^{t_0+}})
&\mapsto
&
T_k(\sigma_{i}(G_{t}^{t_0}), 1+\haty_{t_0}^{\bfc_{i;t}^{t_0+}})
&\mapsto&
(\sigma_{i}(G_{t}^{t_1}), 1+\haty_{t_1}^{\bfc_{i;t}^{t_1+}}),
\end{matrix}
\end{align}
where
\begin{align}
\label{2eq:yxhat1}
\hat{y}_{t_0}^{\bfc_{i;t}^{t_0+}}=x^{\hat\bfc_{i;t}^{t_0+}},
\quad
\hat{y}_{t_1}^{\bfc_{i;t}^{t_1+}}=x^{\hat\bfc_{i;t}^{t_1+}}.
\end{align}

\end{prop}

\begin{proof}
We only need to show  the correspondence
by $\eta_{t_0}^{t_1}$ in \eqref{2eq:wallbj1}.
First we  prove the claim for the supports.
Under the map  $\eta_{t_0}^{t_1}\circ T_k$,
the support $\sigma_i(G_t^{t_0})$
maps to $\eta_{t_0}^{t_1}(T_{k;t_0}(\sigma_i(G_t^{t_0}))$.
By Propositions \ref{2prop:Gfan1} and \ref{2prop:Gfan2},
it equals to $\sigma_i(G_t^{t_1})$.
\par
Next 
we  prove the claim for the wall functions.
Let us consider the case (a) in  Definition \ref{2defn:mutD2},
where $ \bfc_{i;t}^{t_0}\neq\pm\bfe_k$ is assumed.
 Under 
the map $\eta_{t_0}^{t_1}\circ T_k$,
the $\hat{c}^+$-vector $\hat\bfc_{i;t}^{t_0+}
=\varepsilon_{i;t}^{t_0}\hat\bfc_{i;t}^{t_0}$
 obeys the same linear transformations
as its support $\sigma_i(G_t^{t_0})$.
Therefore, by comparing \eqref{2eq:gmut6} and \eqref{2eq:gmut12}, 
its image
equals to
$\varepsilon_{i;t}^{t_0}\hat\bfc_{i;t}^{t_1}$.
Now we  need to compare $\varepsilon_{i;t}^{t_0}$
and $\varepsilon_{i;t}^{t_1}$.
Here we repeat a similar argument 
after \eqref{2eq:Jid2} in the proof of Proposition
\ref{2prop:dual2}.
By \eqref{2eq:cmut6},
$C_t^{t_0}$ and $C_t^{t_1}$ differ only in their $k$th rows.
Since we assume $ \bfc_{i;t}^{t_0}\neq\pm\bfe_k$,
there is at least one nonzero element in the $i$th column
of $C_t^{t_0}$ other than at the $k$-th row.
Thus, we have
$\varepsilon_{i;t}^{t_0}=\varepsilon_{i;t}^{t_1}$,
and we conclude that 
 $\hat\bfc_{i;t}^{t_0+}$
maps to $\hat\bfc_{i;t}^{t_1+}$ as desired.
Now we consider the case (b) in  Definition \ref{2defn:mutD2},
where $ \bfc_{i;t}^{t_0}=\pm\bfe_k$ is assumed.
It is enough to prove the equality
\begin{align}
\label{2eq:ccc1}
\eta_{t_0}^{t_1}(-\hat\bfc_{i;t}^{t_0+})
=
\hat\bfc_{i;t}^{t_1+}.
\end{align}
Again by \eqref{2eq:cmut6},
we have $\bfc_{i;t}^{t_1}=\mp\bfe_k$.
Therefore, $\bfc_{i;t}^{t_0+}=\bfc_{i;t}^{t_1+}=\bfe_k$.
Thus, the equality \eqref{2eq:ccc1} is equivalent to
\begin{align}
\label{2eq:ccc2}
\eta_{t_0}^{t_1}(-\bfb_{k;t_0})
=
\bfb_{k;t_1},
\end{align}
which can be proved as
\begin{align}
 \eta_{t_0}^{t_1}(-\bfb_{k;t_0})=-(J_k+[-B_{t_0}]_+^{\bullet k})\bfb_{k;t_0}
 =-\bfb_{k;t_0}=\bfb_{k;t_1}.
 \end{align}
 (This proof also explains the necessity of the exception in the case (b)
in view of the exceptional change of  tropical signs
 $\varepsilon_{i;t}^{t_0}=-\varepsilon_{i;t}^{t_1}$ therein.)
\end{proof}

\begin{ex}[cf.~Example \ref{2ex:pl1}]
Let us clarify 
Definition  \ref{2defn:mutD2}
and
Proposition 
\ref{2prop:cscat2} explicitly for type $A_2$
based on the convention in Section \ref{2subsec:rank2G1}.
The relevant data  are as follows:
\begin{gather}
B_{t_0}=
\begin{pmatrix}
0 & -1\\
1 & 0
\end{pmatrix},
\quad
B_{t_1}=B_{t_{-1}}=
\begin{pmatrix}
0 & 1\\
-1 & 0
\end{pmatrix},
\\
\haty_{1;t_0}=x_2,
\
\haty_{2;t_0}=x_1^{-1};
\
\haty_{1;t_1}=\haty_{1;t_{-1}}=x_2^{-1},
\
\haty_{2;t_1}=\haty_{2;t_{-1}}=x_1,
\\
S_{1;t_0}=
\begin{pmatrix}
1 & 0\\
1 & 1
\end{pmatrix},
\quad
S_{2;t_0}=
\begin{pmatrix}
1 & -1\\
0 & 1
\end{pmatrix},
\\
\eta_{t_0}^{t_1}=
\begin{pmatrix}
-1 & 0\\
0 & 1
\end{pmatrix},
\quad
\eta_{t_0}^{t_{-1}}=
\begin{pmatrix}
1 & 1\\
0 & -1
\end{pmatrix}.
\end{gather}
The results are depicted  in Figure \ref{2fig:Gmut1},
where $\dot{G}_{t_{0}}^{t}:=T_{k;t_0}({G}_{t_{0}}^{t})$ therein.
See also Figure \ref{2fig:Gmut2}.
\end{ex}

\begin{figure}
\centering
\leavevmode
\begin{xy}
(0,-20)*{\text{$\frakD(\bfSigma_{\fraks,t_0})$}},
(5, 5)*{\text{\small $G_{t_{0}}^{t_0}$}},
(-5, 5)*{\text{\small $G_{t_{1}}^{t_0}$}},
(-5, -5)*{\text{\small $G_{t_{2}}^{t_0}$}},
(3.5, -8.5)*{\text{\small $G_{t_{3}}^{t_0}$}},
(8, -3.5)*{\text{\small $G_{t_{4}}^{t_0}$}},
(2,14)*{\text{\small $1+\hat{y}_{1;t_0}$}};
(18,0)*{ \text{\small $1+\hat{y}_{2;t_0}$}};
(13,-14)*{\text{\small $1+\hat{y}_{1;t_0}\hat{y}_{2;t_0}$}};
(0,0)="A",
\ar "A"+(0,0); "A"+(10,0)
\ar "A"+(0,0); "A"+(0,10)
\ar@{-} "A"+(0,0); "A"+(-10,0)
\ar@{-} "A"+(0,0); "A"+(0,-10)
\ar@{-} "A"+(0,0); "A"+(10,-10)
\end{xy}
\hskip15pt
\begin{xy}
(0,-20)*{\text{$T_1(\frakD(\bfSigma_{\fraks,t_0}))$}},
(3.5, 8.5)*{\text{\small $\dot{G}_{t_{0}}^{t_0}$}},
(-5, 5)*{\text{\small $\dot{G}_{t_{1}}^{t_0}$}},
(-5, -5)*{\text{\small $\dot{G}_{t_{2}}^{t_0}$}},
(5, -5)*{\text{\small $\dot{G}_{t_{3}}^{t_0}$}},
(8, 3)*{\text{\small $\dot{G}_{t_{4}}^{t_0}$}},
(-1,14)*{\text{\small $1+\hat{y}_{1;t_0}^{-1}$}};
(17,14)*{\text{\small $1+\hat{y}_{1;t_0}^{-1}\hat{y}_{2;t_0}$}};
(18,0)*{\text{ \small $1+\hat{y}_{2;t_0}$}};
(0,0)="A"
\ar "A"+(0,0); "A"+(10,0)
\ar "A"+(0,0); "A"+(0,10)
\ar@{-} "A"+(0,0); "A"+(-10,0)
\ar@{-} "A"+(0,0); "A"+(0,-10)
\ar@{-} "A"+(0,0); "A"+(10,10)
\end{xy}
\hskip10pt
\begin{xy}
(0,-20)*{\text{$\frakD(\bfSigma_{\mu_1(\fraks),t_1})$}},
(5, 5)*{\text{\small $G_{t_{1}}^{t_1}$}},
(5, -5)*{\text{\small $G_{t_{2}}^{t_1}$}},
(-5, -5)*{\text{\small $G_{t_{3}}^{t_1}$}},
(-3.5, 8.5)*{\text{\small $G_{t_{0}}^{t_1}$}},
(-8, 3.5)*{\text{\small $G_{t_{4}}^{t_1}$}},
(4,14)*{\text{\small $1+\hat{y}_{1;t_1}$}};
(17,0)*{\text{ \small $1+\hat{y}_{2;t_1}$}};
(-12,14)*{\text{\small $1+\hat{y}_{1;t_1}\hat{y}_{2;t_1}$}};
(0,0)="A",
\ar "A"+(0,0); "A"+(10,0)
\ar "A"+(0,0); "A"+(0,10)
\ar@{-} "A"+(0,0); "A"+(-10,0)
\ar@{-} "A"+(0,0); "A"+(0,-10)
\ar@{-} "A"+(0,0); "A"+(-10,10)
\end{xy}
\vskip5pt
\leavevmode
\hskip113pt
\begin{xy}
(0,-20)*{\text{$T_2(\frakD(\bfSigma_{\fraks,t_0}))$}},
(2, 5)*{\text{\small $\dot{G}_{t_{0}}^{t_0}$}},
(-8, 3.5)*{\text{\small $\dot{G}_{t_{1}}^{t_0}$}},
(-5, -5)*{\text{\small $\dot{G}_{t_{2}}^{t_0}$}},
(3.5, -8.5)*{\text{\small $\dot{G}_{t_{3}}^{t_0}$}},
(8, -3.5)*{\text{\small $\dot{G}_{t_{4}}^{t_0}$}},
(-3,-14)*{\text{\small $1+\hat{y}_{1;t_0}$}};
(18,0)*{ \text{\small $1+\hat{y}_{2;t_0}^{-1}$}};
(14,-14)*{\text{\small $1+\hat{y}_{1;t_0}\hat{y}_{2;t_0}$}};
(0,0)="A",
\ar "A"+(0,0); "A"+(10,0)
\ar@{-} "A"+(0,0); "A"+(-10,10)
\ar@{-} "A"+(0,0); "A"+(-10,0)
\ar@{-} "A"+(0,0); "A"+(0,-10)
\ar@{-} "A"+(0,0); "A"+(10,-10)
\end{xy}
\hskip9pt
\begin{xy}
(0,-20)*{\text{$\frakD(\bfSigma_{\mu_2(\fraks),t_{-1}})$}},
(12, 5)*{\text{\small $G_{t_{4}}^{t_{-1}}=G_{t_{-1}}^{t_{-1}}$}},
(5, -5)*{\text{\small $G_{t_{0}}^{t_{-1}}$}},
(-5, -5)*{\text{\small $G_{t_{1}}^{t_{-1}}$}},
(-3.5, 8.5)*{\text{\small $G_{t_{3}}^{t_{-1}}$}},
(-8.5, 3.5)*{\text{\small $G_{t_{2}}^{t_{-1}}$}},
(6 ,14)*{\text{\small $1+\hat{y}_{1;t_{-1}}$}};
(17,0)*{\text{\small $1+\hat{y}_{2;t_{-1}}$}};
(-13,14)*{\text{\small $1+\hat{y}_{1;t_{-1}}\hat{y}_{2;t_{-1}}$}};
(0,0)="A",
\ar "A"+(0,0); "A"+(10,0)
\ar "A"+(0,0); "A"+(0,10)
\ar@{-} "A"+(0,0); "A"+(-10,0)
\ar@{-} "A"+(0,0); "A"+(0,-10)
\ar@{-} "A"+(0,0); "A"+(-10,10)
\end{xy}
\caption{Mutations of $\frakD(\bfSigma_{\fraks,t_0})$ for type $A_2$.}
\label{2fig:Gmut1}
\end{figure}

To describe the relation between $\frakD(\bfSigma_{\fraks,t_0})$ and $\frakD_{\fraks}$,
we introduce the following notion.
\begin{defn} \index{wall!essential}
For  any wall $(\frakd, f)$  and a scattering diagram $\frakD$  for a common  seed $\fraks$,
we say that   $(\frakd, f)$ is \emph{essential  to $\frakD$\/}
if there is a scattering diagram $\frakD'$ that is equivalent to $\frakD$
satisfying
the following condition:
\begin{itemize}
\item
The wall  $(\frakd, f)$ is a wall of $\frakD'$;
moreover, for any wall $(\frakd', f')$ of $\frakD'$
other than  $(\frakd, f)$, it holds that
$\dim \frakd\cap \frakd' <n-1$.
\end{itemize}

\end{defn}

Combining Theorem \ref{2thm:scat2} and
Proposition \ref{2prop:cscat2},
we obtain the following  relation
between $\frakD(\bfSigma_{\fraks,t_0})$ and $\frakD_{\fraks}$.
(The claim (b) was already given in Theorem \ref{2thm:SCprove1} (a),
but it is included here again for the summary.)

\begin{thm}
\label{2thm:clasterscatter1}
Under the identification $M_{\bbR}\simeq_{\fraks} \bbR^n$,
the following facts hold:
\par
(a).
 Every wall 
$(\sigma_{i}(G_{t}^{t_0}), 1+\hat{y}_{t_0}^{\bfc_{i;t}^{t_0+}})$
of $\frakD(\bfSigma_{\fraks,t_0})$ is essential to 
$\frakD_{\fraks}$.

\par
(b). For any $t\in \bbT_n$, the set $\sigma^{\circ}(G_t^{t_0})$, which is a
chamber of $\mathrm{Supp}(\frakD(\bfSigma_{\fraks,t_0}))$,
is also a chamber of $ \mathrm{Supp}(\frakD_{\fraks})$.
\end{thm}

\begin{proof}
We only need to prove (a).
Basically, we prove the claim 
by the induction on $t_0$.
However, as explained in Remark \ref{2rem:mu1},
the mutation $\mu_k(\fraks)$ in \eqref{2eq:emut1} is not involutive.
So, we pay special attention to it.
Let $t_0,t\in \bbT_n$ be any pair of vertices.
Let $d=d(t_0,t)$ be their distance in $\bbT_n$.
Then,
there is a unique sequence of vertices
$t_0, t_1, \dots, t_{d}=t$
that are sequentially adjacent with
edges labeled by $k_1, \dots, k_d$
 such that 
$d(t_0,t_r)=r$.
First, we assign a seed $\fraks=\fraks_0$ at the vertex $t_0$,
which is the target of the proof.
Then, we assign a seed $\fraks_r$ at the vertex $t_r$
recursively as $\fraks_{r+1}=\mu_{k_{r+1}}^-(\fraks_r)$ by the signed mutation 
in \eqref{2eq:emut3}.
Then, by \eqref{2eq:muinv1}, we have
$\fraks_{r}=\mu_{k_{r+1}}(\fraks_{r+1})$.
Now, the claim (a) follows from the following claim.
\begin{claim}
Let $t$ be as above.
Then, for any $r=0,\dots,d$,
 each wall 
$(\sigma_{i}(G_{t}^{t_r}), 1+\hat{y}_{t_r}^{\bfc_{i;t}^{t_r+}})$
of $\frakD(\bfSigma_{\fraks_r,t_r})$ is essential to 
$\frakD_{\fraks_r}$.
\end{claim}
For each $i=1,\dots,n$,
we prove the claim by the finite induction on $r$  from $d$ to $0$.
\par
First, consider the case $r=d$, namely, $t_d=t$.
We  split the support of the wall $(e_{i}^{\perp}, 1+\hat{y}_{t_d}^{\bfe_{i}})$
of $\frakD_{\fraks_d}$
in Theorem \ref{2thm:scat1}
into  the orthants
\begin{align}
e_{i}^{\perp}=\bigcup_{\kappa_1,\dots,\kappa_{i-1}, \kappa_{i+1},
\dots,\kappa_n=\pm1} \sigma
(\kappa_1e_1, {\dots}, \kappa_{i-1} e_{i-1},  \kappa_{i+1}e_{i+1}, \dots, \kappa_ne_n),
\end{align}
so that the one with $\kappa_1=\dots = \kappa_n=1$
is $\sigma_i(G_{t}^{t_d})=\sigma_i(I)$.
By Theorem \ref{2thm:scat1}, there is no other wall of $\frakD_{\fraks_d}$ whose normal vector is 
$\bfe_{i}$.
Thus, $(\sigma_i(G_{t}^{t_d}), 1+\hat{y}_{t_d}^{\bfe_{i}})$ is essential to $\frakD_{\fraks_d}$.
\par
Next, suppose that the claim is true 
for some $r$.
Namely, for the wall
$(\frakd, f):=(\sigma_{i}(G_{t}^{t_r}), 1+\haty_{t_r}^{\bfc_{i;t}^{t_r+}})$
of $\frakD(\bfSigma_{\fraks_r,t_r})$,
there is a scattering diagram $\frakD'$
that is equivalent to $\frakD_{\fraks_r}$
such that the following holds:
\begin{itemize}
\item
Under the identification $M_{\bbR}\simeq_{\fraks_r} \bbR^n$,
the wall  $(\frakd, f)$ is a wall of $\frakD'$;
moreover, for any wall $(\frakd', f')$ of $\frakD'$
other than  $(\frakd, f)$, it holds that
$\dim \frakd\cap \frakd' <n-1$.
\end{itemize}
We apply the same construction of $T_{k_r}(\frakD_{\fraks_r})$ to
$\frakD'$ and obtain $T_{k_r}(\frakD')$.
Since the construction of $T_{k_r}(\frakD')$ is compatible with
that of $T_{k_r}(\frakD(\bfSigma_{\fraks_r,t_r}))$,
 we have the property:
\begin{itemize}
\item
Under the identification $M_{\bbR}\simeq_{\fraks_r} \bbR^n$,
the wall  $T_{k_r}(\frakd, f)$ is a wall of $T_{k_r}(\frakD')$;
moreover, for any wall $(\frakd', f')$ of $T_{k_r}(\frakD')$
other than  $T_{k_r}(\frakd, f)$, it holds that
$\dim T_{{k_r};t_r}(\frakd)\cap \frakd' <n-1$.
\end{itemize}
By construction, $T_{k_r}(\frakD')$  is equivalent to
$T_{k_r}(\frakD_{\fraks_r})$.
Also,
by Theorem \ref{2thm:scat2},
$T_{k_r}(\frakD_{\fraks_r})$ is equivalent to $\frakD_{\fraks_{r-1}}$,
where we recall $\mu_{k_r}(\fraks_r)=\fraks_{r-1}$.
Thus, 
$T_{k_r}(\frakD')$ is also equivalent to $\frakD_{\fraks_{r-1}}$.
Therefore, the wall  $T_{k_r}(\frakd, f)$ is essential
 to $\frakD_{\fraks_{r-1}}$.
Now, we change the identification
from
 $M_{\bbR}\simeq_{\fraks_r} \bbR^n$
 to $M_{\bbR}\simeq_{\fraks_{r-1}} \bbR^n$
 by the linear isomorphism $\eta_{t_r}^{t_{r-1}}$.
 Then, by Proposition \ref{2prop:cscat2},
 the wall $T_{k_r}(\frakd, f)$ is identified with
 $(\sigma_{i}(G_{t}^{t_{r-1}}), 1+\haty_{t_{r-1}}^{\bfc_{i;t}^{t_{r-1}+}})$.
 Therefore,
we conclude that
the wall $(\sigma_{i}(G_{t}^{t_{r-1}}), 1+\haty_{t_{r-1}}^{\bfc_{i;t}^{t_{r-1}+}})$
 is essential
to $\frakD_{\fraks_{r-1}}$.
This completes the proof
of the claim.
\end{proof}

\begin{rem}
\label{2rem:finite1}
The following remark is due to Nathan Reading.
Theorem \ref{2thm:clasterscatter1} does not mean that
$\frakD(\bfSigma_{\fraks,t_0})$ satisfies the finiteness condition.
For example, consider the initial exchange matrix
\begin{align}
B_{t_0}=
\begin{pmatrix}
0 & -1 & -1\\
1 & 0 & -1\\
1 & 1& 0
\end{pmatrix}.
\end{align}
(This matrix is singular, but it is irrelevant to the point here.)
Then, 
there are 
infinitely many distinct faces $\sigma_i(G_t)$   sharing
the common normal vector $(0,1,0)$,
thus, also with the common wall function.
Therefore,  in this case $\frakD(\bfSigma_{\fraks,t_0})$ is not  a scattering diagram
in the sense of Definition \ref{2defn:scat1}.
However, Theorem \ref{2thm:clasterscatter1} guarantees that
one can combine them into finitely many walls
to satisfy the finiteness condition.
After this, $\frakD(\bfSigma_{\fraks,t_0})$ becomes
a scattering diagram.
\end{rem}

\subsection{$F$-polynomials and wall-crossing automorphisms }

Based on Theorem \ref{2thm:clasterscatter1}, now we can identify
$F$-polynomials with wall-crossing automorphisms
of $\frakD_{\fraks}$.

So far, a wall-crossing automorphism $\frakp_{\gamma,\frakD}$ is defined to
act on $\bbk[[P]]$ by \eqref{2eq:fpm1} as a ring automorphism.
Now we consider a $\bbk[[P]]$-module
$x^m\bbk[[P]]$ for any $m\in M^{\circ}$
such that the action $\frakp_{\gamma,\frakD}(x^m)$
is defined by the same formula \eqref{2eq:fpm1}.

Let us recall the situation in Example \ref{2ex:p1}.
Consider a sequence of vertices $t_0$, $t_1$, \dots, $t_{r+1}=t\in \bbT_n$
such that
they  are sequentially adjacent with edges labeled by $k_0$, \dots, $k_{r}$.
Let $\gamma_t^{t_0}$ be an admissible curve therein.
Namely, $\gamma_t^{t_0}$ starts  in $\sigma^{\circ}(G_t)$,
passing through $G$-cones in $\bbR^n$, and it ends in $\sigma^{\circ}(G_{t_0})$.
Under the identification $M_{\bbR}\simeq_{\fraks} \bbR^n$,
we define
\begin{align}
\label{2eq:fp7}
\frakp_{t}^{t_0}:=\frakp_{\gamma_t^{t_0}, \frakD_{\fraks}}.
\end{align}

The following theorem clarifies the relation
between $F$-polynomials and
 wall-crossing automorphisms in the scattering diagram $\frakD_{\fraks}$.

\begin{thm}[{\cite[Theorem~5.6]{Reading17}, \cite[Theorem~2.9]{Reading18}}]
\label{2thm:F1}
Under the identification $M_{\bbR}\simeq_{\fraks} \bbR^n$,
the following facts hold.
\par
(a).  The wall-crossing  automorphism $\frakp_{t}^{t_0}$ in \eqref{2eq:fp7}
 for $\frakD_{\fraks}$
 coincides with 
the wall-crossing  automorphism $\tilde\frakp_{t}^{t_0}$ in \eqref{2eq:pq2}
 for  $\frakD(\bfSigma_{\fraks,t_0})$.
\par
(b).
The following formula holds
in
$x^{\bfg_{i;t}}\bbk[[P]]$:
\begin{align}
 \label{2eq:fp6}
 \frakp_t^{t_0}(x^{\bfg_{i;t}})
 &=
 x^{\bfg_{i;t}}
 F_{i;t}(\hat\bfy).
  \end{align}
\end{thm}
\begin{proof}
(a).
By  Theorems \ref{2thm:clasterscatter1},
one may assume that,
up to the equivalence,
the walls of $\frakD_{\fraks}$ passed by the above curve $\gamma_t^{t_0}$
all belong to $\frakD(\bfSigma_{\fraks,t_0})$.
Thus, we have
\begin{align}
\tilde\frakp_{t}^{t_0}=
\frakp_{\gamma_t^{t_0}, \frakD(\bfSigma_{\fraks,t_0})}
=\frakp_{\gamma_t^{t_0}, \frakD_{\fraks}}
=\frakp_{t}^{t_0}.
\end{align}
(b).
Recall that $\tilde\frakp_t^{t_0}=\frakq_t^{t_0}$
by \eqref{2eq:pq2}.
Therefore, by \eqref{2eq:fp5}, we have the equality.
\end{proof}

\begin{rem}
\label{2rem:F1}
By Injectivity Assumption, the initial exchange matrix $B$
is nonsingular. Thus, $\hat{y}_1$, \dots, $\hat{y}_n$
are algebraically independent.
Therefore, \eqref{2eq:fp6} uniquely determines  $F$-polynomials
$F_{i;t}(\bfy)$ themselves.
\end{rem}
\subsection{Theta functions and Laurent positivity}
Let us present the outline of the proof of the Laurent positivity
by \cite{Gross14}.

The following object was introduced in \cite{Gross14}
 to provide
 a  combinatorial description of wall-crossing automorphisms.

\begin{defn}[Broken line] \index{broken line}
\label{2defn:broken1}
Let $\frakD$ be any scattering diagram with  respect to  a given initial seed $\fraks$.
Let $m_0\in M^{\circ}\setminus \{0\}$, and let $Q\in M_{\bbR}\setminus
\mathrm{Supp}(\frakD)$.
A \emph{broken line for $m_0$ with endpoint $Q$} is a piecewise-linear
curve $\gamma:(-\infty, 0]\rightarrow M_{\bbR}\setminus
\mathrm{Sing}(\frakD)$ satisfying the following properties:
\begin{itemize}
\item[(1)]
The endpoint $\gamma(0)$ is $Q$.
\item[(2)]
There are numbers $-\infty=t_0 < t_1 < t_2 < \cdots < t_{r+1}=0$ 
$(r\geq 0)$ such that
$\gamma$
 is linear in each interval $I_j=(t_{j},t_{j+1})$,
 while it breaks at each $t_j$  for $j=1,\dots,r$.
 Moreover, $\gamma$ passes  some walls of $\frakD$ at each
 break point $t_j$.
 (It is possible that $\gamma$ passes some walls in an interval $I_j$ without breaking.)
 \item[(3)]
To each $I_j$, a monomial $c_j x^{m_j}\in \bbk[M^{\circ}]$ is attached.
In particular, to $I_0$, a monomial $x^{m_0}$ is attached,
where $m_0$ is the given data.
\item[(4)] The velocity $\gamma'$ is $-m_j$ in the interval $I_j$.
\item[(5)] Let $\gamma_j$ be a segment of $\gamma$ for
the interval
$(t_j-\delta,t_j+\delta)$ with sufficiently small $\delta>0$ such that $\gamma_j$ passes walls
only at $t_j$.
Then, $c_j x^{m_j}$ is a monomial in
$
\frakp_{\gamma_j, \frakD}(c_{j-1}x^{m_{j-1}})$.
\end{itemize}
Finally, we define 
\begin{align}
\mathrm{Mono}(\gamma):=c_r x^{m_r}.
\end{align}
Below we implicitly assume that $Q$ is in a general position so that
it has the maximal set of broken lines in the neighborhood of $Q$.
Namely, no broken line for $Q'$ near $Q$ drops out accidentally by intersecting 
$\mathrm{Sing}(\frakD)$ when $Q'$ approaches to $Q$.
\end{defn}

\begin{rem}
\label{2rem:bend1}
At  $t_j$, the velocity changes
from $-m_j$ to  $-m_{j+1}=-m_j - k_j p^*(n_j)$, where $n_j$ is the common normal vector
of the walls at $t_j$, and $k_j$ is some positive integer.
Since $p^*(n_j)\in n_j^{\perp}$,
the broken line always \emph{crosses\/}  walls,
\emph{not reflects nor stops} at them as stated in  Condition (2).
\end{rem}

\begin{rem}[{\cite[Remark~3.2]{Gross14}}]
\label{2rem:positive1}
In  Condition (5)  of Definition \ref{2defn:broken1},
suppose that $\gamma_j$ crosses possibly multiple walls
$(\frakd_{\lambda}, f_{{\lambda}})_{n_j}$ $(\lambda\in \Lambda_j)$
at $t_j$.
Since $\gamma$ does not intersect $\mathrm{Sing}(\frakD)$,
these walls share the common normal vector $n_j\in N^+_{\mathrm{pr}}$.
Note that $\gamma'=-m_{j-1}$ just before $\gamma$ crosses these walls.
Then,
the intersection signs 
in \eqref{2eq:factor1}
are given by
\begin{align}
\label{2eq:factor2}
\epsilon_j
=
\begin{cases}
1 & \langle n_{j}, m_{j-1}\rangle>0,\\
-1 & \langle n_{j}, m_{j-1}\rangle<0.\\
\end{cases}
\end{align}
Thus, $\epsilon_j \langle n_{j}, m_{j-1}\rangle =|\langle n_{j}, m_{j-1}\rangle|$.
Therefore,  by \eqref{2eq:fpm1}, \eqref{2eq:pdef1}, and \eqref{2eq:pdef2},
we have
\begin{align}
\frakp_{\gamma_j, \frakD}(c_{j-1}x^{m_{j-1}})
=c_{j-1}x^{m_{j-1}
}
\prod_{\lambda\in \Lambda_j}
f_{{\lambda}}^{| \langle \d(n_j)n_j, m_{j-1}\rangle|}.
\end{align}
It is crucial that there is \emph{no division\/} in this expression
for the forthcoming positivity of theta functions.
\end{rem}

\begin{defn}[Theta function] \index{theta function}
Under the same assumption and notations in Definition
\ref{2defn:broken1},
the \emph{theta function $\vartheta_{Q,m_0}$} for $m_0$ with endpoint $Q$  is defined by
\begin{align}
\vartheta_{Q,m_0}
=\sum_{\gamma} \mathrm{Mono}(\gamma),
\end{align}
where the sum is over all broken lines for $m_0$ with endpoint $Q$.
We also define
\begin{align}
\label{2eq:t01}
\vartheta_{Q,0}=1.
\end{align}
\end{defn}

Let us quote the basic properties of theta functions from
\cite{Gross14}.

\begin{prop}[{\cite[Theorem~3.4]{Gross14}}]
For any scattering diagram $\frakD$, we have
\begin{align}
\vartheta_{Q,m_0}\in x^{m_0} \bbk[[P]].
\end{align}
\end{prop}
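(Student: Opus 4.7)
The plan is to verify the two requirements for $\vartheta_{Q,m_0}$ to lie in $x^{m_0}\widehat{\bbk[P]}$:
(a) every contributing monomial $\mathrm{Mono}(\gamma) = c_r x^{m_r}$ satisfies $m_r - m_0 \in P$, so that $\mathrm{Mono}(\gamma) \in x^{m_0}\bbk[P]$; and
(b) for each positive integer $\ell$, only finitely many broken lines $\gamma$ produce a monomial that is nonzero modulo $x^{m_0}\frakm_\ell$.
Together these show that $\vartheta_{Q,m_0}/x^{m_0}$ makes sense as a formal limit in $\widehat{\bbk[P]}$.

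For (a), I will induct on the number $r$ of bending points. By Remark \ref{rem:positive1}, at each bend $t_j$ the walls crossed share a common normal element $n_j \in N^+$, and the next monomial is selected from
\[
c_{j-1} x^{m_{j-1}} \prod_{\lambda \in \Lambda_j} f_{\frakd_\lambda}^{|\langle n'_j, m_{j-1}\rangle|}.
\]
Since every wall function has the form $f_\frakd = 1 + \sum_{k \geq 1} c_k x^{kp^*(n_j)}$, each extracted monomial differs from $c_{j-1} x^{m_{j-1}}$ by $k_j p^*(n_j)$ for some $k_j \in \bbZ_{\geq 0}$, with $k_j \geq 1$ whenever the velocity actually changes. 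Writing $n_j = \sum_i a_{ij} e_i$ with $a_{ij} \in \bbZ_{\geq 0}$ gives $p^*(n_j) = \sum_i a_{ij} p^*(e_i) \in P$, since $P$ is a monoid containing each $p^*(e_i)$. Telescoping then yields $m_r - m_0 = \sum_j k_j p^*(n_j) \in P$.

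For (b), the key input is the finiteness condition in Definition \ref{defn:scat1}: only finitely many walls of $\frakD$ have wall function not congruent to $1$ modulo $\frakm_\ell$; call this finite set $\mathcal{W}_\ell$. A bend at any wall outside $\mathcal{W}_\ell$ multiplies the current monomial by an element of $\frakm_\ell$, contributing trivially to $\vartheta_{Q,m_0}/x^{m_0}$ modulo $\frakm_\ell$. Setting $n_{\mathrm{tot}} := \sum_j k_j n_j \in N^+$, we have $m_r - m_0 = p^*(n_{\mathrm{tot}})$, and $d(n_{\mathrm{tot}}) \geq \ell$ already forces $x^{m_r - m_0} \in \frakm_\ell$ by the very definition of $\frakm_\ell$. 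Hence a broken line contributing nontrivially modulo $\frakm_\ell$ must bend only at walls of $\mathcal{W}_\ell$ and satisfy $\sum_j k_j d(n_j) < \ell$, which bounds both the number of bends and each multiplicity by $\ell - 1$. It then remains to observe that a broken line is uniquely recoverable from its combinatorial type --- the ordered sequence of walls bent at and the corresponding multiplicities --- together with $Q$ and $m_0$: reading the path backward from $Q$ along $+m_r$ until meeting the prescribed wall determines the last bend, the prescribed multiplicity determines $m_{r-1}$, and iterating determines the full trajectory. This gives finitely many broken lines contributing nontrivially modulo $\frakm_\ell$.

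The main obstacle is the geometric realizability step at the end of (b): an arbitrary combinatorial label need not be the combinatorial type of any genuine broken line, because the backward extension may miss the prescribed wall, run into $\mathrm{Sing}(\frakD)$, or meet the wall outside its support cone. Fortunately, the argument only requires an upper bound on the number of realizations, so non-realizable labels contribute nothing, and the backward-reconstruction argument above yields uniqueness of the geometric realization whenever one exists. A minor technical point, easily handled, is that the definition of broken line permits $\gamma$ to cross walls in the interior of an interval $I_j$ without bending; such crossings do not alter the attached monomial and therefore affect neither (a) nor (b).
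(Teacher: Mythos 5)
The paper does not actually prove this statement: it is one of the results deliberately imported from \cite{Gross14} without proof (see the guidelines in the introduction and the phrase ``Let us quote the basic properties of theta functions from \cite{Gross14}''), so there is no in-paper argument to compare against. Your proposal is, in substance, the standard argument behind \cite[Theorem~3.4]{Gross14}, and it is essentially sound. Part (a) is correct and complete: at each break the attached monomial is multiplied by $x^{k_j p^*(n_j)}$ with $k_j\geq 1$ and $p^*(n_j)\in P$ (since $n_j\in N^+$ and $P$ is a monoid containing every $p^*(e_i)$), so telescoping gives $m_r\in m_0+P$. Part (b) correctly combines the finiteness condition on $\frakD$ with the degree bound $\sum_j k_j d(n_j)<\ell$ and the backward-reconstruction uniqueness (note that transversality at the last bend is guaranteed because $\langle n_r,m_r\rangle=\langle n_r,m_{r-1}\rangle\neq 0$, as $p^*(n_r)\in n_r^{\perp}$), and you rightly observe that only an upper bound on realizable combinatorial types is needed. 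Two small points you gloss over, consistent with the paper's own informal treatment: (i) the precise statement should be that at each break the selected monomial must draw a nontrivial term from a wall belonging to $\mathcal{W}_\ell$ (a break point may lie on several walls, not all in $\mathcal{W}_\ell$); (ii) convergence is argued in the $\frakm_\ell$-filtration while $\widehat{\bbk[P]}$ is the $J$-adic completion, so one needs the compatibility of the two filtrations (which holds for $\sigma$ rational polyhedral and $p^*$ injective, but is exactly the kind of bookkeeping \cite{Gross14} handles via their graded Lie algebra and this survey suppresses). Neither point undermines the argument.
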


\begin{prop}[{\cite[Lemma~4.8]{Carl10}, \cite[Theorem~3.5]{Gross14}}]
\label{2prop:theta1}
Let $\frakD$ be a consistent scattering diagram.
Let $m_0\in M^{\circ}$ and $Q,Q'\in M_{\bbR}\setminus
\mathrm{Supp}(\frakD)$.
Then, for any admissible curve $\gamma$ from $Q$ to $Q'$,
we have
\begin{align}
\vartheta_{Q',m_0}=\frakp_{\gamma,\frakD}(\vartheta_{Q,m_0}).
\end{align}
\end{prop}

Now let us specialize to the scattering diagram $\frakD_{\fraks}$
in Theorem \ref{2thm:scat1}.
\begin{prop}[{\cite[Corollary~3.9]{Gross14}}]
\label{2prop:theta2}
Let $\frakD_{\fraks}$ be the scattering diagram
in Theorem \ref{2thm:scat1}.
Let us identify $M_{\bbR}\simeq_{\fraks} \bbR^n$.
For any $G$-cone $\sigma(G_t)$,
let $\bfm_0\in \sigma(G_t)\cap \bbZ^n$
and $Q\in \sigma^{\circ}(G_t)$.
Then, we have
\begin{align}
\vartheta_{Q,\bfm_0}=x^{\bfm_0}.
\end{align}
\end{prop}

From Propositions \ref{2prop:theta1}, \ref{2prop:theta2},
we have the third fundamental theorem on scattering diagrams
in view of the application to cluster algebra theory.

\begin{thm}
\label{2thm:theta5}
Let $\frakD_{\fraks}$ be the scattering diagram
in Theorem \ref{2thm:scat1}.
Let us identify $M_{\bbR}\simeq_{\fraks} \bbR^n$.
For any $G$-cone $\sigma(G_t)$,
let $\bfm_0\in \sigma(G_t)\cap \bbZ^n$
and $Q\in \sigma^{\circ}(G_{t_0})$.
Then, for any admissible curve $\gamma$ from any point in $\sigma^{\circ}(G_{t})$ to $Q$,
we have
\begin{align}
\label{2eq:pgdx1}
\vartheta_{Q,\bfm_0}=\frakp_{\gamma,\frakD}(x^{\bfm_0}).
\end{align}
\end{thm}

From Theorems \ref{2thm:F1} and \ref{2thm:theta5},
we have the following identification of  $x$-variables
in $\bfSigma_{\fraks,t_0}$
with theta functions.

\begin{thm}[{\cite[Theorem~4.9]{Gross14}}]
\label{2thm:theta4}
Let $\frakD_{\fraks}$ be the scattering diagram
in Theorem \ref{2thm:scat1}.
Let us identify $M_{\bbR}\simeq_{\fraks} \bbR^n$.
Then, for any $t\in \bbT_n$ and   $Q\in \sigma^{\circ}(G_{t_0})$,
we have
\begin{align}
 \label{2eq:theta1}
\vartheta_{Q,\bfg_{i;t}}
 &=
 x^{\bfg_{i;t}}
 F_{i;t}(\hat\bfy)
 =x_{i;t}.
  \end{align}
  \end{thm}
  
  \begin{proof}
  We set  $\bfm_0=\bfg_{i;t}$ in \eqref{2eq:pgdx1}.
  Then, comparing  it with    \eqref{2eq:fp6},
  we obtain the equality.
  \end{proof}

  The following positivity result holds.

\begin{prop}[{\cite[Theorem~1.13]{Gross14}}]
\label{2prop:theta3}
The scattering diagram $\frakD_{\fraks}$
in Theorem \ref{2thm:scat1} is equivalent to
a scattering diagram such that every wall function
has a form
\begin{align}
\label{2eq:f1}
f=(1+x^{p^*(n)})^c
\end{align}
for some (not necessarily primitive) $n\in N^+$ and a positive integer $c$.
\end{prop}

\begin{rem}
By Theorem \ref{2thm:clasterscatter1},
the complexity of  wall functions with  non-primitive $n$ or $c\neq 1$ occurs 
only outside $|\Delta(\bfG)|$, i.e., in the dark side.
\end{rem}

\begin{ex}
The wall function in \eqref{2eq:A11}
can be split into the form \eqref{2eq:f1} by
\begin{align}
\Biggl(
\sum_{k=0}^{\infty}
\hat{y}_1^k\hat{y}_2^k
\Biggr)^2
=
\prod_{j=0}^{\infty}
\left(
1+
\hat{y}_1^{2^j}\hat{y}_2^{2^j}
\right)^2.
\end{align}
\end{ex}

By the definition of broken lines, Remark \ref{2rem:positive1},
and Proposition \ref{2prop:theta3},
we have the fourth fundamental theorem on scattering diagrams
in view of the application to cluster algebra theory.

\begin{thm}[{\cite[Theorem~1.13 \& Remark 3.2]{Gross14}}]
\label{2thm:positivetheta1}
For the scattering diagram $\frakD_{\fraks}$  
in Theorem \ref{2thm:scat1},
every theta function $\vartheta_{Q,m_0}
\in x^{m_0} \bbk[[P]]$
has only nonnegative integer coefficients.
\end{thm}

Finally, we obtain the Laurent positivity in Theorem
  \ref{2thm:Fpositive1}.

\begin{thm}[{Laurent positivity, \cite[Theorem 4.10]{Gross14}}] \index{Laurent positivity}
For any cluster patterns $\bfSigma$ and a given initial vertex $t_0$,
every $F$-polynomial $F_{i;t}(\bfy)\in \bbZ[\bfy]$
has no negative coefficients.
\end{thm}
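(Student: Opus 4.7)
The plan is to deduce the Laurent positivity from the identification of $x$-variables with theta functions (Theorem \ref{thm:theta4}) and the manifest positivity of theta functions (Theorem \ref{thm:positivetheta1}), followed by a reduction to the nonsingular case via principal extension.

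First I would treat the case where the $B$-pattern $\bfB$ is nonsingular, so that Injectivity Assumption holds for the corresponding fixed data. In this setting, Theorem \ref{thm:theta4} gives
\begin{align*}
\vartheta_{Q,\bfg_{i;t}}= x^{\bfg_{i;t}}F_{i;t}(\hat\bfy)
\end{align*}
as an element of $x^{\bfg_{i;t}}\widehat{\bbk[P]}$, for any $Q\in\sigma^\circ(G_t)$. By Theorem \ref{thm:positivetheta1}, the theta function $\vartheta_{Q,\bfg_{i;t}}$ has no negative coefficients as a series in $x$, so $F_{i;t}(\hat\bfy)$ has no negative coefficients as a polynomial in the monomials $\hat{y}_1,\dots,\hat{y}_n$. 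Since $B$ is nonsingular, Injectivity Assumption implies that $\hat{y}_1,\dots,\hat{y}_n$ are algebraically independent (as noted in the paper right after Assumption \ref{assum:inj1}); hence the polynomial $F_{i;t}(\bfy)\in\bbZ[\bfy]$ itself has no negative coefficients.

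Next I would remove the nonsingularity assumption by invoking the principal extension construction of Section \ref{subsec:principal1}. Given an arbitrary cluster pattern $\bfSigma$ with $B$-pattern $\bfB$ and initial vertex $t_0$, form the principal extension $\overline{B}_{t_0}$ as in \eqref{eq:principleB1}; this $2n\times 2n$ matrix is skew-symmetrizable and nonsingular, so it generates a $B$-pattern $\overline\bfB$ to which the previous paragraph applies. By Proposition \ref{prop:principal1}, the $F$-polynomials of $\overline\bfB$ and $\bfB$ are related by
\begin{align*}
\overline{F}_{i;t}(\overline\bfy)=F_{i;t}(\bfy), \qquad i=1,\dots,n,\ t\in\bbT_n\subset\bbT_{2n},
\end{align*}
under the substitution $\overline\bfy=(y_1,\dots,y_n,y_{n+1},\dots,y_{2n})$ with $y_1,\dots,y_n$ identified with the $\bfy$ of $\bfB$. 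Since $\overline{F}_{i;t}$ has no negative coefficients, neither does $F_{i;t}$. This establishes Conjecture \ref{conj:Fpositive1} in full generality.

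The substantive work has already been done earlier: the main obstacles were Theorem \ref{thm:positivetheta1} (positivity of theta functions, which relies crucially on Proposition \ref{prop:theta3} and the sign convention \eqref{eq:factor2} that eliminates divisions in the broken line construction, as explained in Remark \ref{rem:positive1}), and Theorem \ref{thm:theta4} (identification of $x$-variables with theta functions via Theorem \ref{thm:F1}, which rests on the consistency of $\frakD_\fraks$ and the embedding of $\frakD(\bfSigma_{\fraks,t_0})$ into it from Theorem \ref{thm:clasterscatter1}). Given these inputs, the proof of Laurent positivity itself is an immediate two-step reduction, and no further analytical difficulty is expected.
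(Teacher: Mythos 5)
Your proposal is correct and follows essentially the same route as the paper: in the nonsingular case, combine Theorem \ref{thm:theta4} with Theorem \ref{thm:positivetheta1} and the algebraic independence of $\hat{y}_1,\dots,\hat{y}_n$ (the content of Remark \ref{rem:F1}) to transfer positivity from the theta function to $F_{i;t}(\bfy)$, then reduce the singular case via Proposition \ref{prop:principal1}. No gaps.
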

\begin{proof}
Recall that we have assumed Injectivity Assumption for $\Gamma$.
For a nonsingular $B$-pattern, 
it is a corollary of  Theorems  \ref{2thm:positivetheta1}, \ref{2thm:theta4},
and Remark \ref{2rem:F1}.
The singular case reduces to the nonsingular case
by Proposition \ref{2prop:principal1}.
\end{proof}

\subsection{Linear independence of cluster monomials}

Let us present another important consequence of  
Theorem \ref{2thm:theta4}.

Following \cite{Fomin03c}, we introduce the notion of cluster monomials.
\begin{defn}[Cluster monomials]
\index{cluster monomial}
For a given cluster pattern $\bfSigma$,
a {\em cluster monomial} at $t\in \bbT_n$ is a monomial
(including 1)
in the cluster variables $x_{1;t}$, \dots,  $x_{n;t}$ at $t$. 
\end{defn}

Two cluster variables taken from different $t$ and $t'$ in $\bbT_n$ may coincide.
For example, if $t$ and $t'$ are $k$-adjacent, a cluster monomial at $t$ that does
not contain $x_{k;t}$ as a factor is also a cluster monomial at $t'$.
We regard them as the same cluster monomial.
We are interested in the collection of all \emph{distinct} cluster monomials taken from all $t\in \bbT_n$.

The following  property was conjectured by \cite[Conj.~4.16]{Fomin03c}, then partially proved by
 \cite{Caldero05, Derksen10, Geiss10, Plamondon10b, Cerulli11, Cerulli12} for various classes,
and proved by \cite{Gross14} in full generality.
See also Corollary \ref{2cor:lin1}.

\begin{thm}[{\cite[Theorem 7.20]{Gross14}}]
\label{2thm:mono1}
For a given cluster pattern $\bfSigma$,
all distinct cluster monomials are linearly independent over $\bbZ$.
\end{thm}

Below we present a proof of Theorem \ref{2thm:mono1} following \cite{Gross14}.

First, we  express  cluster monomials with theta functions
in the same way as cluster variables given in Theorem \ref{2thm:theta4}.
A cluster monomial at $t$ has the following form
\begin{align}
x_t^{\bfa}=\prod_{i=1}^n x_{i;t}^{a_i}
\quad
(\bfa=(a_i) \in \bbZ_{\geq 0}^n).
\end{align}
We define the $g$-vector $\bfg_t^{\bfa}$ of $x_t^{\bfa}$ by
\begin{align}
\bfg_t^{\bfa}=\sum_{i=1}^n a_i \bfg_{i;t}.
\end{align}
By definition,
this is in the $G$-cone $\sigma(G_{t})$.
Under the situation of Theorem \ref{2thm:theta4},
we have
\begin{align}
\label{2eq:monot1}
\vartheta_{Q,\bfg_t^{\bfa}}
=\frakp_{\gamma,\frakD}( x^{\bfg_t^{\bfa}})
=\frakp_{\gamma,\frakD}\biggl(\prod_{i=1}^n (x^{\bfg_{i;t}})^{a_i}\biggr)=
\prod_{i=1}^n \frakp_{\gamma,\frakD}( x^{\bfg_{i;t}})^{a_i}
=x_t^{\bfa},
\end{align}
where we used \eqref{2eq:theta1} in the last equality.

By the Laurent phenomenon, we know that $\vartheta_{Q,\bfg_t^{\bfa}}=x_t^{\bfa}$ has a Laurent polynomial expression in $\bfx$.
We say that a Laurent monomial in $\bfx$ is \emph{proper} if it has a negative exponent for some $x_i$.
\index{proper Laurent monomial}

The following lemma is a key to prove Theorem \ref{2thm:mono1}.
\begin{lem}[{In the proof of \cite[Theorem 7.20]{Gross14}}]
\label{2lem:pL1}
In the Laurent polynomial expression of the theta function
$\vartheta_{Q,\bfg_t^{\bfa}}$ in \eqref{2eq:monot1},
every Laurent monomial in $\bfx$ is proper if $\bfg_t^{\bfa} \not\in \sigma(G_{t_0})
\cap \bbZ^n=\bbZ_{\geq 0}^n$.
\end{lem}
\begin{proof}
We may assume that $\bfg_t^{\bfa}\neq \bfzero$.
Suppose that $\vartheta_{Q,\bfg_t^{\bfa}}$ contains a non-proper Laurent monomial $M$ in $\bfx$.
Then, there is a broken line $\gamma$ for $\bfg_t^{\bfa}$ with endpoint $Q$ such that 
the final velocity $\gamma'(0)$ does not have any positive component.
Since $\gamma'(0)$ is not zero, it is a negative vector.
 Such a broken line has no break point; therefore, it is a ray ending at $Q$ with constant 
 velocity $-\bfg_t^{\bfa}$.
Thus, $\bfg_t^{\bfa}$ is a positive vector.
\end{proof}

Let us rephrase the above result in terms of  cluster monomials.

\begin{lem}
\label{2lem:pL2}
Let $\bfSigma$ be  a given cluster pattern.
In the Laurent polynomial expression of 
a cluster monomial $x_t^{\bfa}$ for $\bfSigma$ at $t$,
every Laurent monomial in $\bfx$ is proper 
  if $x_t^{\bfa}$ does not coincide with any cluster monomial at $t_0$.
\end{lem}
\begin{proof}
First, assume that the underlying $B$-pattern $\bfB$ is nonsingular.
Suppose that $x_t^{\bfa} $ contains a non-proper Laurent monomial $M$ in $\bfx$.
Then, by  \eqref{2eq:monot1} and Lemma \ref{2lem:pL1},
$x_t^{\bfa}=\vartheta_{Q,\bfg_t^{\bfa}}$ and
$\bfg_t^{\bfa}$ belongs to $\sigma(G_{t_0})$.
Thus, we can write $\bfg_t^{\bfa}=\sum_{i=1}^n b_i \bfe_i$ ($b_i\in \bbZ_{\geq 0}$).
Then, we obtain
\begin{align}
\label{2eq:monot2}
x_t^{\bfa}=
\vartheta_{Q,\bfg_t^{\bfa}}
=\frakp_{\gamma,\frakD}( x^{\bfg_t^{\bfa}})
=\frakp_{\gamma,\frakD}\biggl(\, \prod_{i=1}^n x_i^{b_i}\biggr)=
\prod_{i=1}^n (\frakp_{\gamma,\frakD}( x_i))^{b_i}
=x^{\bfg_t^{\bfa}}.
\end{align}
Thus, $x_t^{\bfa}$ coincides with a cluster monomial at $t_0$.

Next, assume that $\bfB$ is singular.
Consider the principal extension  $\overline{\bfB}$.
By Proposition \ref{2prop:principal1} and the separation formula \eqref{2eq:sep1}, 
$x_t^{\bfa}$ for  $\overline{\bfB}$ is given by
\begin{align}
\label{2eq:xta1}
x_t^{\bfa}&=
\prod_{i=1}^n
\biggl(\,
\prod_{j=1}^n
x_j^{g_{ji;t}}
\biggr)^{a_i}
F_{i;t}(\hat{\bfy})^{a_i}
\quad
(i=1,\dots,n),
\end{align}
where $G_t$ and $F_{i;t}$ are a $G$-matrix and an $F$-polynomial for $\bfB$.
Thus, this expression formally coincides with the one for $\bfB$.
However, in the formula,  for $\overline{\bfB}$, we have
\begin{align}
\hat{y}_i=\sum_{j=1}^{2n} x_j^{b_{ji}}=x_{n+i}\sum_{j=1}^n x_j^{b_{ji}}
\quad
(i=1,\dots,n),
\end{align}
and,  for $\bfB$, we have $\hat{y}_i=\sum_{j=1}^n x_j^{b_{ji}}$.
To extract the result for $\bfB$,
we note the following facts:
\begin{itemize}
\item[(a)]
Any cluster monomial 
$x_t^{\bfa}$ for  ${\bfB}$
is obtained from the corresponding
$x_t^{\bfa}$ for  $\overline{\bfB}$
by the specialization $x_{n+1}=\cdots = x_{2n}=1$.
\item[(b)]
Since $F$-polynomials are polynomials in $\hat{\bfy}$,
the exponents of $x_{n+i}$ ($i=1,\dots,n$) are nonnegative for every Laurent monomial in
the RHS of \eqref{2eq:xta1} for $\overline{\bfB}$.
Therefore, all the above proper Laurent monomials
are still proper under the specialization $x_{n+1}=\cdots = x_{2n}=1$.
\end{itemize}
Now, suppose that 
 $x_t^{\bfa} $ for $B$ contains a non-proper Laurent monomial
in $\bfx=(x_1,\dots,x_n)$.
Then, by (b),
the corresponding
 $x_t^{\bfa} $ for $\overline B$ also contains a non-proper Laurent monomial
in $\tilde \bfx=(x_1,\dots,x_{2n})$.
Thus,
by the first part of the proof,
 $x_t^{\bfa} $ for $\overline B$ coincides with a 
 cluster monomial for  $\overline B$ at $t_0$.
 Therefore, by (a),
 $x_t^{\bfa} $ for $ B$ coincides with a 
 cluster monomial for  $ B$ at $t_0$. 
\end{proof}

\index{proper Laurent monomial property}
The property presented in Lemma \ref{2lem:pL2} is called 
the {\em proper Laurent monomial property} in \cite{Cerulli11}. 
It was shown in \cite[Theorem 6.4]{Cerulli11} that
the linear independence in Theorem \ref{2thm:mono1} immediately follows
form this property.
For the reader's convenience, we reproduce the proof here.

\begin{proof}[Proof of Theorem \ref{2thm:mono1}]
Consider a $\bbZ$-linear relation among distinct cluster monomials.
One can organize it in the following form:
\begin{align}
\label{2eq:monorel1}
\sum_{i} c_i x_{t_i}^{\bfa_i}
=
\sum_i c'_i x^{\bfa'_i},
\end{align}
where each cluster monomial $x_{t_i}^{\bfa_i}$ 
in the LHS does not coincide with any cluster monomial  at $t_0$.
Also, $t_i=t_j$ may happen for $i\neq j$.
Then, by Lemma \ref{2lem:pL2},
 the LHS is a sum of proper Laurent monomials.
 Thus,
to have the equality,
the  both  sides of \eqref{2eq:monorel1} vanish.
Then, it follows that $c'_i=0$ for any $i$
by the linear independence of the cluster monomials at $t_0$, which is obvious.
One can repeat this argument by changing the initial vertex $t_0$ to other vertices in $\bbT_n$
and show that $c_i=0$ for any $i$.
\end{proof}

\begin{rem}
Using the same idea, the linear independence of a wider class of
theta functions including cluster monomials (called the \emph{theta basis}) was proved in \cite[Theorem 7.20]{Gross14}.
\end{rem}

One can immediately extend Theorem \ref{2thm:mono1}
to a  cluster pattern with coefficients in the sense of \cite{Fomin07}.
This was the original conjecture given by \cite{Fomin07}.

\begin{cor}
\label{2cor:lin1}
For any cluster pattern $\bfSigma$
with  coefficients in a given semifield $\bbP$,
all distinct cluster monomials are linearly independent over $\bbZ\bbP$.
\end{cor}
\begin{proof}
Suppose that  there is a linear relation among cluster monomials over $\bbZ\bbP$ for some cluster pattern with  coefficients.
Then, by trivializing the coefficients, we obtain a linear relation over $\bbZ$ for the cluster pattern without coefficients.
This is a contradiction.
\end{proof}

\subsection{Remarks on singular case}
One can actually remove  Injectivity Assumption of $\Gamma$,
or equivalently, the assumption of the nonsingularity of the corresponding $B$-pattern,
 and still obtain
an analogous result to Theorem \ref{2thm:theta4} directly.
Since this is beyond the scope of Part II, we only give a sketch
of the outline here.

On the $F$-polynomial side, 
we need to modify the result \eqref{2eq:fp5}.
We consider a cluster pattern
with \emph{principal coefficients\/} at $t_0$ in \cite{Fomin07}.
Let $\bbP=\mathrm{Trop}(\bfy)$ be the tropical semifield of $\bfy=\bfy_{t_0}$.
Let $\bbZ\bbP$ be the group ring of $\bbP$,
and let $\bbQ\bbP$ be the fraction field of $\bbZ\bbP$.
The $\haty$-variables are defined by
\begin{align}
\label{2eq:yhatp1}
\haty_{i;t}=y_{i;t}\prod_{j=1}^n x_{j;t}^{b_{ji;t}}.
\end{align}
In particular, let $\hat\bfy=(\hat y_i)$ be the initial $\haty$-variables,
\begin{align}
\haty_{i}=\haty_{i;t_0}=y_{i}\prod_{j=1}^n x_{j}^{b_{ji}},
\end{align}
which are algebraically independent in the ambient field $\calF_X=(\bbQ\bbP)(\bfx)$
whether $B$ is nonsingular or singular.
Let $t, t'\in \bbT_n$ be vertices  that are $k$-adjacent.
The $y$-variables transform in $\bbP$ by
\begin{align}
\label{2eq:ymut31}
y_{i;t'}
&=
\begin{cases}
\displaystyle
y_{k;t}^{-1}
& i=k,
\\
y_{i;t} y_{k;t}^{[\varepsilon b_{ki;t}]_+} (1\oplus y_{k;t}^{\varepsilon})^{-b_{ki;t}}
&i\neq k.
\end{cases}
\end{align}
Meanwhile,
one can regard
mutations of $x$-variables
as  isomorphisms of fields 
 as follows,
where $\bfx_{t}$, $\bfx_{t'}$ are $n$-tuple of formal variables:
\begin{align}
\mu_{k;t}:(\bbQ\bbP)(\bfx_{t'}) & \rightarrow (\bbQ\bbP)(\bfx_t),
\\
\label{2eq:xmut31}
\mu_{k;t}(x_{i;t'}) &=
\begin{cases}
\displaystyle
x_{k;t}^{-1}\Biggl(\, \prod_{j=1}^n x_{j;t}^{[-\varepsilon b_{jk;t}]_+}
\Biggr)
\frac{
 1+\hat{y}_{k;t}^{\varepsilon}
}{
 1\oplus \hat{y}_{k;t}^{\varepsilon}
}
& i=k,
\\
x_{i;t}
&i\neq k.
\end{cases}
\end{align}
Compare it with \eqref{2eq:xmut3}.
Note that, for the tropical sign $\varepsilon_{k;t}$, we have
\begin{align}
1\oplus y_{k;t}^{\varepsilon_{k;t}}=1,
\end{align}
thus, the factor $1\oplus \hat{y}_{k;t}^{\varepsilon}$ 
in \eqref{2eq:ymut31} and  \eqref{2eq:xmut31} disappears
by setting $\varepsilon=\varepsilon_{k;t}$.
Then, we just repeat the same decomposition \eqref{2eq:decom1}
and so on, and we obtain the same formula as \eqref {2eq:fp5},
 \begin{align}
 \label{2eq:fp52}
 \frakq_t^{t_0}(x^{\bfg_{i;t}})
 &=
 x^{\bfg_{i;t}}
 F_{i;t}(\hat\bfy),
 \end{align}
but now $\haty_1$, \dots, $\haty_n$ are  algebraically independent as mentioned.

Meanwhile, on the scattering diagram side, following  \cite[Appendix B]{Gross14},
we consider the extension of 
 the lattice $M^{\circ}$ of a fixed data $\Gamma$,
\begin{align}
\tilde M^{\circ} = M^{\circ}\oplus N.
\end{align}
Then, we introduce a parallel map  to  $p^*$ in \eqref{2eq:p*1},
\begin{gather}
\label{2eq:p*11}
\tilde p^* :  N \rightarrow \tilde M^{\circ},
\\
( \tilde p^*(n))(n'+m') = \{ n', n\}+ \langle n, m \rangle,
\quad (n'\in N^{\circ}, m'\in M).
\end{gather}
We have
\begin{align}
\label{2eq:pe12}
 \tilde p^*(e_j)=\sum_{i=1}^n b_{ij}f_i + e_j,
 \end{align}
 so that the \emph{map $\tilde p^*$ is injective whether $B$ is singular or not}.
 Also,
 under the identification \eqref{2eq:initials1}, we have
 \begin{align}
\label{2eq:pe22}
x^{\tilde p^*(e_i)}=y_i\prod_{j=1}^n x_j^{b_{ji}}=: \haty_i,
\end{align}
which coincides with \eqref{2eq:yhatp1}.
We also need to extend the monoid $P$ in \eqref{2eq:P1} with $\tilde P\subset \tilde M^{\circ}$
as follows:
\begin{itemize}
\item[(i).]
$\tilde P=\sigma \cap \tilde M^{\circ}$,
where $\sigma $ is a $2r$-dimensional strongly convex cone in $\tilde M^{\circ}_{\bbR}$.
\item[(ii).]
$\tilde p^*(e_1), \cdots, \tilde p^*(e_r)\in \tilde P$.
\end{itemize}
Such $\tilde P$ is not unique at all, and we choose one arbitrarily.
The result does not depend on the choice of $\tilde P$.
Then, for a wall $(\frakd, f)_n$, we replace the wall function $f$ in \eqref{2eq:fd1} with
\begin{align}
\label{2eq:fd2}
f=1+\sum_{k=1}^{\infty} c_k x^{k\tilde p^*(n)} \in {\bbk[[\tilde P]]},
\end{align}
while $n$ and $\frakd$ remain the same.
Then, under this modification, we have a parallel result for Theorems \ref{2thm:theta5}
and \ref{2thm:theta4} without assuming Injectivity Assumption.
More details will be found in Part III.

\newpage
\section{Some applications}

Let us give some applications of the results
and the techniques presented so far.

\subsection{Detropicalization revisited}
\label{2subsec:synchro2}
We return to the situation in Section
\ref{2subsec:synchro1}.

 As mentioned in Remark \ref{rem:detrop1},
 the following result was given for $\sigma=\mathrm{id}$
 by \cite{Cao17},
 and  for general $\sigma$ by
  \cite{Nakanishi19},
  where both
 rely on the Laurent positivity.
 Here, we give an alternative proof based on
 Theorem  \ref{2thm:F1} and
the consistency of the scattering diagram $\frakD_{\fraks}$,
without relying on the Laurent positivity.

 \begin{thm}[{Detropicalization \cite[Lemma~2.4 \& Theorem~2.5]{Cao17}, \cite[Theorem~5.2]{Nakanishi19}}]
 \label{2thm:detrop1} \index{detropicalization}
  Let $\bfSigma$ be any cluster pattern of rank $n$,
 and let $t_0$ be a given initial vertex.
Then, for any $t,t'\in \bbT_n$ and any permutation
$\nu\in S_n$, the following facts hold:
 \begin{itemize}
\item[(a).] $G_{t}=\nu G_{t'}\ \Longrightarrow\ \bfx_{t}=\nu \bfx_{t'}$.
\par
\item[(b).] $C_{t}=\nu C_{t'}\ \Longrightarrow\ \bfy_{t}=\nu \bfy_{t'}$.
\end{itemize}
\end{thm}

\begin{proof}
First, assume that the underlying $B$-pattern $\bfB$ is nonsingular.
\par
\par
(a). Assume that $G_t=\nu G_{t'}$.
Then, $\sigma(G_t)=\sigma( G_{t'})$.
Therefore, for $\frakp_{t}^{t_0}$ in \eqref{2eq:fp7},
we have
\begin{align}
\label{2eq:pp1}
\frakp_{t}^{t_0}=\frakp_{t'}^{t_0}
\end{align}
by the consistency of the scattering diagram $\frakD_{\fraks}$.
Then, we have $\frakq_{t}^{t_0}=\frakq_{t'}^{t_0}$
(for $x$-variables)
by Theorem \ref{2thm:F1} (a) and  \eqref{2eq:pq2}.
Therefore, 
 $\bfx_{t}=\nu \bfx_{t'}$ by Proposition \ref{2prop:detrop1}.
Alternatively, by Theorem \ref{2thm:F1} (b) and \eqref{2eq:pp1},
we have
\begin{align}
 \label{2eq:fp11}
   \frakp_t^{t_0}(x^{\bfg_{i;t}})
 &=
 x^{\bfg_{i;t}}
 F_{i;t}(\hat\bfy)=x_{i;t},
 \\
  \label{2eq:fp12}
 \frakp_t^{t_0}(x^{\bfg_{i;t}})
 &=
 \frakp_{t'}^{t_0}(x^{\bfg_{\nu^{-1}(i);t'}})
 =
  x^{\bfg_{\nu^{-1}(i);t'}}
 F_{\nu^{-1}(i);t'}(\hat\bfy)
 =
 x_{\nu^{-1}(i);t'}.
  \end{align}
Thus, we have $\bfx_{t}=\nu \bfx_{t'}$.
\par
(b).
Assume that $C_t=\nu C_{t'}$.
Then, by the duality \eqref{2eq:dual1}
and \eqref{2eq:cbc1},
we obtain \begin{align}
\d_{\nu(i)}=\d_i,
\
G_t=\nu G_{t'},
\
B_t=\nu B_{t'}.
\end{align}
See \cite[Prop.~4.4 \& Cor.~4.5]{Nakanishi19} for details.
Then, from \eqref{2eq:fp11} and \eqref{2eq:fp12},
we have
\begin{align}
 F_{i;t}(\hat\bfy)=
F_{\nu^{-1}(i);t'}(\hat\bfy).
\end{align}
By Remark  \ref{2rem:F1}, we have
\begin{align}
 F_{i;t}(\bfy)=
F_{\nu^{-1}(i);t'}(\bfy).
\end{align}
Thus, by the separation formula \eqref{2eq:sep2},
we obtain
 $\bfy_{t}=\nu \bfy_{t'}$.
\par
Next, we consider the case when $\bfB$ is singular.
Thanks to Proposition \ref{2prop:principal1},
 it reduces to the nonsingular case as follows:
 \begin{align}
 G_t=\nu G_{t'}
 \ \Longrightarrow\
  \overline{G}_t=\nu \overline{G}_{t'}
 \ \Longrightarrow\
  \overline{\bfx}_t=\nu \overline{\bfx}_{t'}
   \ \Longrightarrow\
  {\bfx}_t=\nu {\bfx}_{t'},
  \\
   C_t=\nu C_{t'}
 \ \Longrightarrow\
  \overline{C}_t=\nu \overline{C}_{t'}
 \ \Longrightarrow\
  \overline{\bfy}_t=\nu \overline{\bfy}_{t'}
   \ \Longrightarrow\
  {\bfy}_t=\nu {\bfy}_{t'},
 \end{align}
 where $  \overline{\bfSigma}=\{\overline{\Sigma}_t=(\overline{\bfx}_t, \overline{\bfy}_t,
 \overline{B}_t)\}_{t\in \bbT_n}$ is a cluster pattern 
 in Proposition \ref{2prop:principal1}.
\end{proof}

\subsection{Bijection between $g$-vectors and $x$-variables}

Here we  prove a  statement,
which sharpens Theorem
\ref{2thm:detrop1} (a).
This was proved
by \cite{Cerulli12} in the skew-symmetric case
with the representation/categorical method.
Also, the implication $\Longrightarrow$ was proved by
\cite[Theorem~3.2]{Cao18} in the skew-symmetrizable case
with principal coefficients.
In contrast to the proof of Theorem
\ref{2thm:detrop1},
here we rely on the Laurent positivity.

\begin{thm}
\label{2thm:gx1}
Let $\bfSigma$ be any cluster pattern,
 and let $t_0$ be a given initial vertex.
Then, we have
\begin{align}
\bfg_{i;t}^{t_0}=\bfg_{i';t'}^{t_0}
\quad
\Longleftrightarrow
\quad
x_{i;t}=x_{i';t'}.
\end{align}
\end{thm}

\begin{proof}
($\Longleftarrow$)
First we apply the separation formula \eqref{2eq:sep1} to $x_{i';t'}$ 
with respect to the initial vertex $t$.
Then,  by assumption, we have
\begin{align}
x_{i';t'}=\Biggl(\, \prod_{j=1} x_{j;t}^{g_{ji';t'}^{t}}\Biggr)F_{i';t'}^t(\hat\bfy_t)=x_{i;t}.
\end{align}
By the equivalent statement of the sign-coherence
in Conjecture \ref{2conj:Fconst1},
which is now proved,
 the constant term of $F_{i';t'}^t(\bfy_t)$ is 1.
Then, by the Laurent positivity, we have $F_{i;t'}^t(\bfy_t)=1$
and also $\bfg_{i';t'}^t=\bfe_i=\bfg_{i;t}^t$.
Then, applying the bijection in Proposition \ref{2prop:Gfanphi1} repeatedly,
we obtain $\bfg_{i';t'}^{t_0}=\bfg_{i;t}^{t_0}$.
\par
($\Longrightarrow$)
From the assumption,
we obtain
$\bfg_{i';t'}^t=\bfe_i$
by the opposite procedure to the above.
Thus, we have
\begin{align}
x_{i';t'}=x_{i;t}F_{i';t'}^t(\hat\bfy_t).
\end{align}
Changing the role of $t$ and $t'$, we also have
\begin{align}
x_{i;t}=
x_{i';t'}F_{i;t}^{t'}(\hat\bfy_{t'}).
\end{align}
From two equalities, we obtain
\begin{align}
F_{i';t'}^t(\hat\bfy_t)F_{i;t}^{t'}(\hat\bfy'_{t})=1.
\end{align}
Again,  by the constant term 1 of $F$-polynomials and the Laurent positivity,
we have $F_{i';t'}^t(\bfy_t)=F_{i;t}^t(\bfy_{t})=1$.
Therefore, $x_{i;t}=x_{i';t'}$.
\par
($\Longrightarrow$) Let us give an alternative proof using the consistency of
the scattering diagram
$\frakD_{\fraks}$ without using the Laurent positivity.
Assume that $\bfB$ is nonsingular.
Let us identify $M_{\bbR}\simeq_{\fraks} \bbR^n$.
Then, we have
\begin{align}
\label{2eq:xp1}
x_{i;t}=\frakp_{t}^{t_0}(x^{\bfg_{i;t}}),\quad
x_{i';t'}=\frakp_{t'}^{t_0}(x^{\bfg_{i';t'}})=\frakp_{t'}^{t_0}(x^{\bfg_{i;t}}).
\end{align}
By assumption, two cones $\sigma(G_{t})$, $\sigma(G_{t'})$ have
the ray
 $\sigma(\bfg_{i;t})=\sigma(\bfg_{i';t'})$
 as a common face.
Let $\gamma_{t}^{t'}$ be an admissible curve 
from $\sigma^{\circ}(G_{t})$ to $\sigma^{\circ}(G_{t'})$
for $\frakD_{\fraks}$.
(It may  not be completely contained 
in $|\Delta(\bfG^{t_0})|$.)
For each degree $\ell$,
by ignoring all walls of $\frakD_{\fraks}$ whose wall functions are trivial modulo  $\widehat{J}^{\ell}$,
one can take $\gamma_{t}^{t'}$ close enough to the ray
$\sigma(\bfg_{i;t})$ so that $\gamma_{t}^{t'}$
passes through only walls of $\frakD_{\fraks}$ that contain $\sigma(\bfg_{i;t})$.
Since the normal vectors of these walls are orthogonal to $\bfg_{i;t}$,
the contribution to the wall-crossing automorphism $\frakp_{\gamma_{t}^{t'}, \frakD_{\fraks}}$ from each wall is trivial.
Therefore,
we have
\begin{align}
\label{2eq:xp21}
\frakp_{\gamma_{t}^{t'}, \frakD_{\fraks}}(x^{\bfg_{i;t}})
\equiv x^{\bfg_{i;t}}
\mod \widehat{J}^{\ell}.
\end{align}
Since $\ell$ is arbitrary, this implies that
\begin{align}
\label{2eq:xp2}
\frakp_{\gamma_{t}^{t'}, \frakD_{\fraks}}(x^{\bfg_{i;t}})
= x^{\bfg_{i;t}}.
\end{align}
By the consistency of $\frakD_{\fraks}$, we have
\begin{align}
\label{2eq:xp3}
\frakp_{t'}^{t_0}\circ\frakp_{\gamma_{t}^{t'}}=\frakp_{t}^{t_0}.
\end{align}
By combining \eqref{2eq:xp1}--\eqref{2eq:xp3}, we obtain $x_{i;t}=x_{i';t'}$.
Next, we consider the case when $\bfB$ is singular.
Thanks to Proposition \ref{2prop:principal1},
 it reduces to the nonsingular case as follows:
\begin{align}
\bfg_{i;t}^{t_0}=\bfg_{i';t'}^{t_0}
\ \Longrightarrow\
\overline\bfg_{i;t}^{t_0}=\overline\bfg_{i';t'}^{t_0}
\ \Longrightarrow\
\overline{x}_{i;t}=\overline{x}_{i';t'}
\ \Longrightarrow\
x_{i;t}=x_{i';t'}.
\end{align}
\end{proof}

\begin{rem}
The first proof of the implication $\Longrightarrow$ is 
a little simplified version of the proof of
 \cite[Theorem~3.2]{Cao18}.
\end{rem}

\begin{rem}
\label{2rem:coef1}
The above proof (as for the implication $\Longrightarrow$, the first one)
is applicable to any cluster pattern \emph{with  arbitrary coefficients\/}
in the sense of \cite{Fomin07},
with a little care of coefficients.
Originally in \cite{Fomin07}, a $g$-vector is uniquely  associated with each $x$-variable only
for cluster patterns with geometric type satisfying a certain condition in
 \cite[Eq.~(7.10)]{Fomin07}. Now we can safely associate a $g$-vector with  each $x$-variable  for any cluster pattern
\emph{with  arbitrary coefficients}.
\end{rem}

\begin{rem} As for $y$-variables and $c$-vectors,
the implication
\begin{align}
\bfc_{i;t}^{t_0}=\bfc_{i';t'}^{t_0}
\quad
\Longleftarrow
\quad
y_{i;t}=y_{i';t'}
\end{align}
holds by Proposition \ref{2prop:ytrop1}.
However, the opposite implication does not hold. For example, for type $A_2$ case in
Section \ref{2subsec:rank2G1}, we have
\begin{align}
y_{2;t_{0}}=y_2, \quad
y_{2;t_{1}}=y_2(1+y_1),
\end{align}
which is already a counterexample.
\end{rem}

One can rephrase Theorem \ref{2thm:gx1}
using the notion of the cluster complex in \cite{Fomin03a}.
\begin{defn}[Cluster complex] \index{cluster complex}
The \emph{cluster complex\/} $\Delta(\bfSigma)$ of a cluster pattern $\bfSigma$ is the
simplicial complex whose vertices are $x$-variables (cluster variables)
and whose simplices are nonempty subsets of clusters.
\end{defn}

We define a parallel notion for a  $G$-pattern $\bfG^{t_0}$ of $\bfSigma$,
where we encounter  the conflict of notation with a $G$-fan $\Delta(\bfG^{t_0})$.
However, two notions are essentially equivalent, so that we abuse the notation.

\begin{defn}[$G$-complex] \index{$G$-complex}
The \emph{$G$-complex\/} $\Delta(\bfG^{t_0})$ of a $G$-pattern $\bfG^{t_0}$ is the
simplicial complex whose vertices are $g$-vectors in $\bfG^{t_0}$
and whose simplices are  nonempty sets of  $g$-vectors belonging to
a common $G$-matrix.
\end{defn}
One can geometrically identify a simplex in the $G$-complex $\Delta(\bfG^{t_0})$ with
the intersection $\sigma\cap S^{n-1}$ 
of a
cone  $\sigma$ in the $G$-fan $\Delta(\bfG^{t_0})$
and the unit sphere $S^{n-1}$ in $\bbR^n$.

We have a corollary of Theorem \ref{2thm:gx1}.

\begin{cor}
\label{2cor:isocomp1}
For any cluster pattern $\bfSigma$ and any vertex $t_0$, we have
an isomorphism of simplicial complexes
\begin{align}
\label{2eq:comiso1}
\begin{matrix}
\Delta(\bfSigma)&\simeq&\Delta(\bfG^{t_0})
\\
x_{i;t}& \longleftrightarrow & \bfg_{i;t}.
\end{matrix}
\end{align}
\end{cor}

Due to Remark \ref{2rem:coef1},
we can extend the result to any cluster pattern with arbitrary coefficients
in the sense of \cite{Fomin07}. 
This extends the result of \cite[Cor.~5.6]{Cerulli12} in the skew-symmetric case.

\begin{cor}
\label{2cor:cciso1}
For any cluster pattern $\bfSigma$
with arbitrary coefficients
in the sense of \cite{Fomin07}
 and any vertex $t_0$, we have
 the same isomorphism as  \eqref{2eq:comiso1}.
\end{cor}

We have a further corollary of Corollary \ref{2cor:cciso1}
and Theorem \ref{2thm:Gfan1}.
The equivalence $(a)\Longleftrightarrow (d)$ below
is a part of a conjecture by \cite[Conj. 4.14(b)]{Fomin03c},
and it was proved by
 \cite{Gekhtman07} with algebraic arguments.
 Here we replace the proof with geometric arguments.
 
\begin{cor}[{cf.~\cite[Theorem~5]{Gekhtman07}}].
For any cluster pattern $\bfSigma$
with arbitrary coefficients
in the sense of \cite{Fomin07}
 and any vertex $t_0$,
 the following conditions for $t,t'\in \bbT_n$ are equivalent:
 \begin{itemize}
 \item[(a).]
 $\bfx_t$ and $\bfx_{t'}$
 contain exactly $n-1$ common elements (as a set),
where $n$ is the rank of $\bfSigma$.
 \item[(b).]
$\sigma(G_t)$ and $\sigma(G_{t'})$ intersects in
their common face of codimension one.
 \item[(c).]
 There are some $t''\in \bbT_n$
 that is  adjacent to $t$ 
 and some permutation  $\nu\in S_n$ such that
 $G_{t'}=\nu G_{t''}$.
  \item[(d).]
 There are some $t''\in \bbT_n$
 that is  adjacent  to $t$ 
 and some permutation  $\nu\in S_n$ such that
 $\bfx_{t'}=\nu\bfx_{t''}$.
 \end{itemize}
\end{cor}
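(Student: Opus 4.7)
The plan is to establish the cycle (a) $\Leftrightarrow$ (b) $\Leftrightarrow$ (c) $\Leftrightarrow$ (d), using Corollary \ref{cor:cciso1} as the bridge between cluster-variable statements and $g$-vector statements, and invoking Theorem \ref{thm:Gfan1} (the fan property of $\Delta(\bfG^{t_0})$) for the geometric input.

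First I would prove (a) $\Leftrightarrow$ (b). By Corollary \ref{cor:cciso1}, the assignment $x_{i;t}\mapsto \bfg_{i;t}$ extends to an isomorphism of simplicial complices $\Delta(\bfSigma)\simeq \Delta(\bfG^{t_0})$, so a cluster $\bfx_t$ corresponds precisely to the set of generators of the $G$-cone $\sigma(G_t)$. Hence $\bfx_t$ and $\bfx_{t'}$ share exactly $n-1$ elements if and only if the generator sets of $\sigma(G_t)$ and $\sigma(G_{t'})$ share exactly $n-1$ vectors. By the unimodularity \eqref{eq:unimod1}, each $G$-cone is a nonsingular simplicial cone, so sharing $n-1$ generators is equivalent to sharing a codimension-one face, i.e.\ to (b).

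Next I would prove (b) $\Leftrightarrow$ (c). The direction (c) $\Longrightarrow$ (b) is immediate: if $t''$ is connected to $t$ by the edge labeled $k$, then the mutation formula \eqref{eq:gmut7} gives $\sigma_k(G_t)=\sigma_k(G_{t''})$, while a permutation of the columns of $G_{t''}$ does not change the underlying cone, so $\sigma(G_{t'})=\sigma(G_{t''})$ shares that face with $\sigma(G_t)$. Conversely, suppose (b) holds and let $F=\sigma(G_t)\cap\sigma(G_{t'})$ be the common codimension-one face. Simpliciality forces $F=\sigma_k(G_t)$ for some $k$; let $t''$ be the neighbor of $t$ along the edge labeled $k$. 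Both $\sigma(G_{t''})$ and $\sigma(G_{t'})$ contain $F$ and, by Theorem \ref{thm:Gfan1}, lie on the side of the hyperplane spanned by $F$ opposite to $\sigma(G_t)$ (otherwise the interiors of $\sigma(G_{t'})$ and $\sigma(G_t)$ would meet, contradicting the fan property). Intersecting them gives a common face of both; since both are nonsingular simplicial cones of full dimension sharing the whole codimension-one face $F$ on the same side, they must coincide as cones. The unimodularity then yields a unique permutation $\nu\in S_n$ with $G_{t'}=\nu G_{t''}$, giving (c).

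Finally, (c) $\Leftrightarrow$ (d) follows directly from Corollary \ref{cor:cciso1}: under the simplicial isomorphism $x_{i;t''}\leftrightarrow \bfg_{i;t''}$, the equality $G_{t'}=\nu G_{t''}$ of ordered tuples of $g$-vectors is equivalent to the equality $\bfx_{t'}=\nu\bfx_{t''}$ of ordered tuples of cluster variables.

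The main obstacle will be the implication (b) $\Longrightarrow$ (c), where the set-theoretic coincidence of the cones $\sigma(G_{t'})$ and $\sigma(G_{t''})$ must be upgraded to the conclusion that $G_{t'}$ and $G_{t''}$ agree as ordered tuples up to a permutation, and where one must rule out that several distinct maximal cones of the $G$-fan could be glued onto $F$ on the same side of $\sigma(G_t)$. Both issues dissolve once Theorem \ref{thm:Gfan1} is applied, so the argument is really an exercise in extracting the right consequences of the fan structure and of Corollary \ref{cor:cciso1}.
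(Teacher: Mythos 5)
Your proposal is correct and follows essentially the same route as the paper, which derives $(a)\Longleftrightarrow(b)$ and $(c)\Longleftrightarrow(d)$ from Corollary \ref{cor:cciso1} and $(b)\Longleftrightarrow(c)$ from the fan property in Theorem \ref{thm:Gfan1}. You merely fill in the details (unimodularity of the $G$-cones, the side-of-the-hyperplane argument) that the paper leaves implicit, and these are all sound.
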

\begin{proof}
The equivalences $(a)\Longleftrightarrow (b)$ and $(c)\Longleftrightarrow (d)$ 
are due to Corollary \ref{2cor:cciso1},
while $(b)\Longleftrightarrow (c)$ is due to Theorem \ref{2thm:Gfan1}.
\end{proof}


\newpage
\pagestyle{myheadings}
\bibliography{../../biblist/biblist.bib}
\fontsize{10pt}{10.5pt}\selectfont
\printindex
\end{document}